\documentclass[3p,times]{article}
\usepackage{bbm,amsthm}

\newcommand{\Model}{\mathcal{M}} 

\newcommand{\cbf}{\mathbf{c}}

\newcommand{\R}{\mathbf{R}}
\newcommand{\B}{\mathbf{B}}

\newcommand{\x}{\mathbf{x}}
\newcommand{\y}{\mathbf{y}}

\newcommand{\M}{\mathbf{M}}

\newcommand{\J}{\mathcal{J}}

\newcommand{\qoi}{{\sc q}o{\sc i} }

\usepackage{subfigure}
\usepackage{amsmath}
\usepackage{amssymb}
\usepackage{graphicx}
\usepackage{capt-of}
\usepackage{color}
\usepackage{url}
\usepackage{algorithm}
\usepackage{algpseudocode,caption,ulem}
\usepackage[toc,page]{appendix}

\newtheorem{comment}{Comment}
\newtheorem{theorem}{Theorem}
\algblock{ParFor}{EndParFor}
\algnewcommand\algorithmicparfor{\textbf{For all}}
\algnewcommand\algorithmicpardo{\textbf{do in parallel}}
\algnewcommand\algorithmicendparfor{\textbf{end\ For all}}
\algrenewtext{ParFor}[1]{\algorithmicparfor\ #1\ \algorithmicpardo}
\algrenewtext{EndParFor}{\algorithmicendparfor}

\pagestyle{myheadings}
\markright{V. Rao and A. Sandu\hfill A-posteriori estimates\hfill}

\title{A-posteriori error estimates for inverse problems}
\author{
	Vishwas Rao and Adrian Sandu
}
\date{\today}

\begin{document}
\thispagestyle{empty}
\setcounter{page}{0}

\begin{Huge}
\begin{center}
Computational Science Laboratory Technical Report CSL-TR-16-2014\\
\today
\end{center}
\end{Huge}
\vfil
\begin{huge}
\begin{center}
Vishwas Rao and Adrian Sandu
\end{center}
\end{huge}

\vfil
\begin{huge}
\begin{it}
\begin{center}
``A-posteriori error estimates for inverse problems''
\end{center}
\end{it}
\end{huge}
\vfil
\textbf{Cite as:} Vishwas Rao and Adrian Sandu.  A posteriori error estimates for DDDAS inference problems. Procedia Computer Science Journal. Volume 29, Pages 1256 -- 1265, ``2014 International Conference on Computational Science.''
\vfil

\begin{large}
\begin{center}
Computational Science Laboratory \\
Computer Science Department \\
Virginia Polytechnic Institute and State University \\
Blacksburg, VA 24060 \\
Phone: (540)-231-2193 \\
Fax: (540)-231-6075 \\ 
Email: \url{sandu@cs.vt.edu} \\
Web: \url{http://csl.cs.vt.edu}
\end{center}
\end{large}

\vspace*{1cm}

\begin{tabular}{ccc}
\includegraphics[width=2.5in]{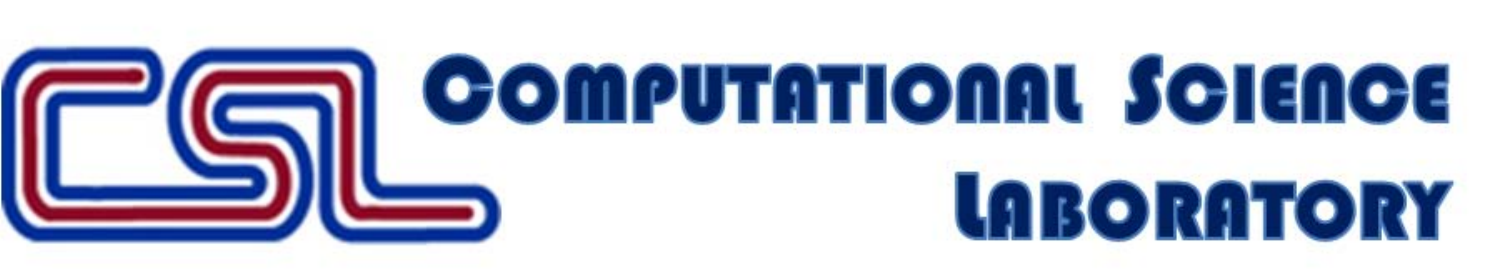}
&\hspace{2.5in}&
\includegraphics[width=2.5in]{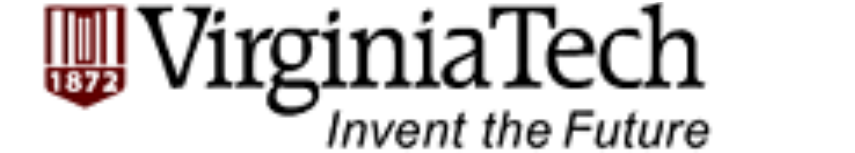} \\
{\bf\em Innovative Computational Solutions} &&\\
\end{tabular}

\newpage

 \maketitle
\begin{abstract}
Inverse problems use physical measurements along with a computational model to estimate the parameters or state of a system of interest.  
Errors in measurements and uncertainties in the computational model lead to inaccurate estimates.  This work develops a methodology to estimate the impact of different errors on the variational solutions of inverse problems. The focus is on time evolving systems described by ordinary differential equations, and on a particular class of inverse problems, namely, data assimilation. The computational algorithm uses first-order and second-order adjoint models. 
In a deterministic setting the methodology provides a posteriori error estimates for the inverse solution. In a probabilistic setting it provides an a posteriori quantification of uncertainty in the inverse solution, given the uncertainties in the model and data. Numerical experiments with the shallow water equations in spherical coordinates illustrate the use of the proposed error estimation machinery in both deterministic and probabilistic settings. 
\end{abstract}

\newpage
\tableofcontents

\newpage\setcounter{page}{1}
\section{Introduction}
Inverse problems use information from different sources in order to infer the state or parameters of a system of interest.
Data assimilation is a class of inverse problems that combines information from an imperfect computational model 
(which encapsulates our knowledge of the physical laws that govern the evolution of the real system), from noisy observations (sparse snapshots of 
reality), and from an uncertain prior (which encapsulates our current knowledge of reality). Data assimilation combines these three sources of information and the associated uncertainties in a Bayesian framework to provide the posterior, i.e., the best description of reality when considering the new information from the data.
In a variational approach data assimilation is formulated as an optimization problem whose solution represents a maximum likelihood estimate of the state or parameters. The errors in the underlying computational observation as well as the errors in the observations lead to error in the optimal solution. Our goal is to quantitatively estimate the impact of various errors on the accuracy of the optimal solution. 

A posteriori error estimation is concerned with quantifying the error associated with a particular -- and already computed -- solution of the problem of interest \cite{Estep:1995,Estep:2002}.  A posteriori error estimation is a well-established methodology in the context of numerical approximations of partial differential equations \cite{ainsworth2011posteriori}.  The approach has been extended to the solution of inverse problems \cite{becker2001optimal} and has been applied to guide mesh refinement  \cite{becker2005mesh}.  The Ph.D. dissertation of M. Alexe \cite{Alexe:2011} develops systematic methodologies for quantifying the impact of various errors on the optimal solution in variational inverse problems. Recent related work in the context of variational data assimilation has developed tools to quantify the impact of errors in the background, observations, and the associated error covariance matrices on the accuracy of resulting analyses \cite{Gejadze:2008,Gejadze:2013,Gejadze:2012}. The choice of 
optimal error covariances for estimating parameters such as distributed coefficients and boundary conditions for a convection-diffusion model has been discussed in \cite{Gejadze:2010}. 

While previous work has considered the impact of data errors, no method is available to date to estimate the impact of model errors on the optimal solution of a variational inverse problem.This paper develops a coherent framework to estimate the impact of both model and data errors on the optimal solution. The computational procedure makes use of first order and the second order adjoint information and builds upon our previous work \cite{Alexe:2011,Alexe:2013,Cioaca:2012}.

The remainder of the paper is organized as follows. In Section \ref{sec:probdef} we define the problem and derive the optimality conditions for the problem in \ref{sec:kkt}. We use the super Lagrangian technique in Section \ref{sec:supLag} to develop a general algorithm to obtain the super Lagrange multipliers, which are necessary to perform the error estimates. We define the perturbed inverse problem in Section \ref{sec:pinv} and obtain the first order optimality conditions for it in Section \ref{sec:kktP}. In Section \ref{sec:errest} we derive the expression to estimate the error in the optimal solution for a general inverse problem. 
In Section \ref{sec:discretemodels} we present the discrete-time model framework. In Section \ref{sec:aposterioriErr} we present the error estimation methodology for discrete models. 
In Section \ref{sec:da} we present a detailed procedure to perfrom the error estimation for the data assimilation problem. We show the numerical results to support our theory for the heat equation and the shallow water model in spherical co-ordinates in Section \ref{sec:numexp}. The error estimates are statistically validated in Section \ref{sec:statValEsts}. Finally we give the concluding remarks in Section \ref{sec:conc}.
\section{Inverse problems with continuous-time models} \label{sec:probdef}
We consider a time-evolving physical system modeled by ordinary differential equations (ODEs):
\begin{equation}\label{eqn:ode}
\x' = f\left(t,\x,\theta\right),\quad t_0 \leq t \leq t_F, \quad \x(t_0) = \x_0(\theta)\,,
\end{equation}
where $t\in \mathbb{R}$ is time, $\x \in \mathbb{R}^n$ is the state vector, and $\theta \in \mathbb{R}^m$ is the vector of parameters. In many practical situations \eqref{eqn:ode} represents an evolutionary partial differential equation (PDE) after the semi--discretization in space. We call \eqref{eqn:ode} the continuous {\it forward model}.

A cost function defined on the solution and on the parameters of \eqref{eqn:ode} has the general form
\begin{equation}\label{eqn:cf}
 \J\left(\x, \theta\right) = \displaystyle\int\limits_{t_0}^{t_F}\, r\left(\x(t),\theta\right)\, \mathrm{d}t + w\left(\x(t_F),\theta\right).
\end{equation}
We consider the following inverse problem that seeks the optimal values of the model parameters:
\begin{equation}\label{eqn:ip}
 \begin{aligned}
 \theta^{\rm a}  =  &\underset{\theta} {\text{arg\, min}}\,
&& \J\left(\x, \theta\right) \\
  & \text{subject to}
  & & \text{\eqref{eqn:ode}\,.} \\
 \end{aligned}
\end{equation}
The inverse problem in \eqref{eqn:ip} is constrained by the dynamics of the system \eqref{eqn:ode}. Solving this system for a given value of the parameters finds the solution 
$\x(t,\theta)$. Using this solution in \eqref{eqn:ip} eliminates the constraints and leads to the equivalent  unconstrained problem
\begin{equation}\label{eqn:ipunc}
\theta^{\rm a}  =  \underset{\theta} {\text{arg\, min}} \,\mathcal{J}\left(\x(\theta), \theta\right),
\end{equation}
where $\mathcal{J}\left(\x(\theta), \theta\right)$ is the reduced cost function.
The problem \eqref{eqn:ip} or \eqref{eqn:ipunc} can be solved numerically using gradient based methods. The derivative information required for the computation of gradients and Hessian can be computed using sensitivity analysis \cite{Cao:2002,Cioaca:2012,Petzold:2004}.

We are interested in estimating the impact of observation and model errors on the optimal solution $\theta^{\rm a}$. Specifically, we will quantify the effect of errors on a certain quantity of interest (\qoi) defined by a scalar error functional $\mathcal{E} : \mathbb{R}^m \to \mathbb{R}$ that measures a certain aspect of the the optimal parameter value 
\begin{equation}\label{eqn:error_functionalCont}
\textnormal{\qoi} = \mathcal{E}\left(\theta^{\rm a}\right)\,.
\end{equation}
An example of error functional is the $k$-th component of the optimal parameter vector $\mathcal{E}\left(\theta^{\rm a}\right) = \theta^{\rm a}_{\rm k}$.

\subsection{First order optimality conditions} \label{sec:kkt}
The Lagrangian function associated with the cost function in \eqref{eqn:cf} and the constraint in \eqref{eqn:ode} is
\begin{equation}\label{eqn:Lag}
 \mathcal{L} = \displaystyle\int\limits_{t_0}^{t_F}\,r\left(\x(t),\theta\right)\, \mathrm{d}t + w\left(\x(t_F),\theta\right) - \displaystyle\int\limits_{t_0}^{t_F}\,\lambda^{\rm T}(t)\cdot\left(\x' - f(t,\x,\theta)\right)\,
 \mathrm{d}t.
\end{equation}
Setting to zero the variations of $\mathcal{L}$ with respect to the independent perturbations $\delta\lambda$, $\delta \x$, and $\delta\theta$
leads to the following optimality equations:
\begin{subequations}
\label{eqn:kkt}
\begin{align}
\label{eqn:odekkt}
\textnormal{forward model:}~~ & -\x' + f(t,\x,\theta) = 0, \\
\nonumber
& \quad t_0 \le t \le t_F, \quad \x(t_0) = \x_0\,, \\
\label{eqn:adjoint}
\textnormal{adjoint model:}~~ & \lambda' +  r_\x^{\rm T}\left(\x(t), \theta \right) + f_\x^{\rm T} \left(t,\x,\theta \right) \cdot \lambda = 0,  \\
\nonumber
& \quad t_F \leq t \leq t_0, \quad \lambda\left( t_F \right) = w_{\x}^{\rm T} \left( \x \left(t_F\right), \theta \right)\,, \\
\label{eqn:opt}
\textnormal{optimality:} ~~& \xi\left(t_0 \right) + \x_\theta^{\rm T}(t_0)\cdot \lambda (t_0) = 0\,, \\
\nonumber
&\textnormal{where}~~\xi' = - r_\theta^{\rm T} \left(\x(t),\theta \right) - f_{\theta}^{\rm T} (t,\x,\theta)\cdot \lambda, \\
\nonumber
& \qquad t_F \leq t \leq t_0, \quad \xi\left(t_F\right) = w^{\rm T}_{\theta}\left(\x\left(t_F \right), \theta \right)\,.
\end{align}
\end{subequations}
Equations \eqref{eqn:kkt} constitute the first order optimality conditions for the inverse problem \eqref{eqn:ip}. Subscripts denote partial derivatives, e.g.,
$f_\x=\partial f/\partial \x$. For a detailed derivation of the first order optimality conditions, please see the Appendix \ref{app:kkt}.
\subsection{The super-Lagrangian} \label{sec:supLag}
We follow the methodology discussed in \cite{Alexe:2011,becker2005mesh} to develop a posteriori error estimates applicable to our problem of interest. 

The Lagrangian associated with the error functional of the form \eqref{eqn:error_functionalCont} and the constraints posed by the first order optimality conditions \eqref{eqn:kkt} is:
\begin{eqnarray}\label{eqn:supLag}
 \mathcal{L^E} &= & \,\mathcal{E}\left(\theta^{\rm a}\right)
 -\displaystyle\int\limits_{t_0}^{t_F}\,\nu^{\rm T} \cdot \left(-\x' + f \right)\,\mathrm{d}t  - \nu^{\rm T}\left(t_0 \right) \cdot \left( \x(t_0) - \x_0\right) \\
  \nonumber
&& - \displaystyle\int\limits_{t_0}^{t_F}\,\mu^{\rm T} \cdot \left(\lambda'+r_{\x}^{\rm T}  +  f_{\x}^{\rm T} \cdot \lambda \right) \, \mathrm{d}t 
\nonumber
 - \mu^{\rm T}\left( t_F \right)  \cdot \left( \lambda\left( t_F \right) - w^{\rm T}_{\x} \left( \x\left(t_F \right), \theta \right) \right)\\
\nonumber
&& - \displaystyle\int\limits_{t_0}^{t_F}\,\zeta^{\rm T} \cdot \left(\xi' + r_\theta^T + f_{\theta}^T \cdot \lambda \right) \, \mathrm{d}t \\
\nonumber
&&-\zeta^{\rm T} \cdot \left(\xi\left(t_F \right) - w^{\rm T}_{\theta} \left( \x \left(t_F\right), \theta \right)  \right) \\
\nonumber
&&-\zeta^{\rm T} \cdot \left(\xi\left(t_0 \right) + \x_\theta^{\rm T}(t_0)\cdot \lambda (t_0) \right)\,. \nonumber
\end{eqnarray}
We have removed the arguments for convenience of notation. Here $\nu$, $\mu$, and $\zeta$ are the super--Lagrange multipliers associated with constraints \eqref{eqn:odekkt} (forward model), \eqref{eqn:adjoint} (adjoint model), and \eqref{eqn:opt} (optimality condition) respectively.
\subsubsection{The tangent linear model}
Taking the variations of \eqref{eqn:supLag} and imposing the stationarity condition $\nabla_{\lambda}\mathcal{L^E} = 0$ leads to the following {\it tangent linear model} (TLM): 
\begin{eqnarray}\label{eqn:tlmsup}
&& -  \mu' + f_{\x} \cdot \mu   + f_{\theta}  \cdot \zeta = 0, \quad t_0 \le t \le t_F; \\
\nonumber
&& \mu\left( t_0 \right) =   \x_\theta(t_0)\cdot \zeta\,.
\end{eqnarray}
\subsubsection{The second order adjoint equation}
The stationarity condition $\nabla_{\x}\mathcal{L^E} = 0$ leads to the following {\it second order adjoint} ODE (SOA):
\begin{eqnarray}\label{eqn:soasup}
 && \nu' +   f_{\x}^{\rm T} \cdot \nu +  r_{\x,\x} \cdot \mu  + \left(f_{\x,\x} \cdot \mu \right)^{\rm T} \cdot \lambda  \\ 
 \nonumber
&& \qquad +  r_{\theta,\x} \cdot \zeta +  \left(f_{\theta,\x} \cdot \zeta \right)^{\rm T} \cdot \lambda   =  0, \qquad t_F \ge t \ge t_0; \\
\nonumber
&& \nu \left(t_F \right) = w_{\theta,\x} \left(\x\left(t_F \right), \theta\right) \cdot \zeta  +  w_{\x,\x} \left(\x\left(t_F \right), \theta\right) \cdot \mu \left(t_F \right)
\end{eqnarray}
%
\subsubsection{The optimality equation}
The stationarity condition $\nabla_{\theta}\mathcal{L^E} = 0$ leads to the following  {\it optimality equation}:
\begin{equation} \label{jetheta}
 \left. \left( \frac{d^2}{d\theta^2}\, \mathcal{J}\left(\x(\theta),\theta\right) \right)\right|_{\theta^{\rm a}} \cdot \zeta = \mathcal{E}_{\theta},
\end{equation}
where the reduced Hessian-vector product in the direction of $\delta \theta$ is given by:
\begin{equation}\label{eqn:jtheta}
 \begin{split}
  \frac{d^2\,\mathcal{J}\left(\x(\theta),\theta\right)}{d\theta^2} \cdot \delta \theta 
   & = w^{\rm T}_{\theta, \x} \left(\x(t_F), \theta \right) \cdot \delta \x \left(t_F\right) + w_{\theta,\theta} \left(\x\left(t_F \right), \theta \right) \cdot \delta \theta \\
    & + \left(\frac{d\x_0}{d\theta}\right)^{\rm T}\cdot \nu(t_0) + \left( \frac{d^2\x_0}{d\theta^2} \delta \theta \right)\cdot \lambda(t_0)\\
    &+ \displaystyle \int \limits_{t_0}^{t_F}\, \left(f_\theta^{\rm T} \cdot \nu + \left(f_{\theta, \x} \cdot \delta \x\right)^{\rm T} \cdot \lambda + \left(f_{\theta,\theta}  \cdot \delta \theta \right)^{\rm T} \cdot \lambda \right) \, \mathrm{d}t \\
    &+\displaystyle \int \limits_{t_0}^{t_F}\,\left(r^{\rm T}_{\theta,\x} \cdot \delta \x + r_{\theta,\theta} \cdot \delta \theta \right) \mathrm{d}t\,.
 \end{split}
\end{equation}
The procedure to obtain the super Lagrange parameters $\zeta$, $\mu$ and $\nu$ is summarized in Algorithm \ref{alg:sup}. A detailed derivation of the super-Lagrange parameters is presented in Appendix \ref{app:supLag}.
\begin{algorithm}
\caption{SuperLagrangeMultipliers}\label{alg:sup}
\begin{algorithmic}[1]
\Procedure{SuperLagrangeMultipliers}{}
\State Solve the linear system \eqref{jetheta} to obtain $\zeta$.
\State Solve the tangent linear model \eqref{eqn:tlmsup} to obtain $\mu$.
\State Solve the second order adjoint equation \eqref{eqn:soasup} to obtain $\nu$.
\EndProcedure
\end{algorithmic}
\end{algorithm}
\subsection{Perturbed inverse problems}\label{sec:pinv}
In practice the forward model \eqref{eqn:ode} is inaccurate and subject to model errors. 
To describe this inaccuracy we consider a forward model that is marred by a time-- and state--dependent model error
\begin{equation}\label{eqn:odeerr}
 \widehat{\x}' = f\left(t,\widehat{\x},\theta \right) + \Delta f\left(t,\widehat{\x}\right)\,, \quad  \widehat{\x}\left(t_0\right) = \x_0 + \Delta\x_0\,.
\end{equation}
Furthermore, the noise in the data leads to errors $\Delta r$ and $\Delta w$ in the corresponding terms of the cost function \eqref{eqn:cf}. 
The inaccurate cost function is given by
\begin{equation}\label{eqn:cferr}
 \widehat{\mathcal{J}}\left(\widehat{\x},\theta\right) = \displaystyle\int\limits_{t_0}^{t_F}\, \left(r\left(\widehat{\x}(t),\theta\right)+\Delta r\right)\, \mathrm{d}t + w\left(\widehat{\x}(t_F),\theta\right) + \Delta w\,.
\end{equation}
Therefore in practice one solves the following perturbed inverse problem:
\begin{equation}\label{eqn:iperr}
 \begin{aligned}
 \widehat{\theta}^{\rm a}  =  &~\underset{\theta} {\text{arg\, min}}~~ \widehat{\J}\left(\widehat{\x},\theta \right) \\
  & \text{subject to}  ~~  \text{\eqref{eqn:odeerr}\,.} \\
 \end{aligned}
\end{equation}
%
\subsection{First order optimality conditions for the perturbed inverse problems} \label{sec:kktP}
The Lagrangian function associated with the cost function in \eqref{eqn:cferr} and the constraint in \eqref{eqn:odeerr} is
\begin{eqnarray}\label{eqn:LagP}
 \widehat{\mathcal{L}} &=& \displaystyle\int\limits_{t_0}^{t_F}\,\left(r\left(\widehat{\x}(t),\theta\right) + \Delta r \right)\, \mathrm{d}t + w\left(\widehat{\x}(t_F),\theta\right)+\Delta w \\
 \nonumber &&- \displaystyle\int\limits_{t_0}^{t_F}\,\lambda^{\rm T}(t)\cdot\left(\widehat{\x}' - f(t,\widehat{\x},\theta) -\Delta f\right)\,\mathrm{d}t\,.
\end{eqnarray}
Setting to zero the variations of $\widehat{\mathcal{L}}$ with respect to the independent perturbations $\delta\widehat{\lambda}$, $\delta \widehat{\x}$, and $\delta\theta$
leads to the following optimality equations:
\begin{subequations}
\label{eqn:kktP}
\begin{align}
\label{eqn:odekktP}
\textnormal{perturbed forward model:}~~ & -\widehat{\x}' + f\left(t,\widehat{\x}, \theta\right) +\Delta f\left(t, \widehat{\x}\right)  = 0, \\
\nonumber
& \quad t_0 \le t \le t_F, \quad \widehat{\x}(t_0) = \x_0 + \Delta\x_0\,, \\
\label{eqn:adjointP}
\textnormal{perturbed adjoint model:}~~ & \widehat{\lambda}' +  r_\x^{\rm T}\left(\widehat{\x}(t), \theta \right) + f_\x^{\rm T} \left(t,\widehat{\x},\theta \right) \cdot \widehat{\lambda} = 0,  \\
\nonumber
& \quad t_F \leq t \leq t_0, \quad \widehat{\lambda}\left( t_F \right) = w_\x^{\rm T}  + \Delta w_\x^{\rm T}  \,, \\
\label{eqn:optP}
\textnormal{perturbed optimality:} ~~& \widehat{\xi}\left(t_0 \right) + \widehat{\x}_\theta^{\rm T}(t_0)\cdot \widehat{\lambda} (t_0) = 0\,, \\
\nonumber
&\textnormal{where}~~\widehat{\xi}' = - r_\theta^{\rm T} \left(\widehat{\x}(t),\theta \right) - f_{\theta}^{\rm T} (t,\widehat{\x},\theta)\cdot \widehat{\lambda}, \\
\nonumber
& \qquad t_F \leq t \leq t_0, \quad \widehat{\xi}\left(t_F\right) = w^{\rm T}_{\theta} + \Delta w^{\rm T}_{\theta} \,.
\end{align}
\end{subequations}
Equations \eqref{eqn:kkt} constitute the first order optimality conditions for the inverse problem \eqref{eqn:iperr}. A detailed derivation of the first order optimality conditions is presented in the Appendix \ref{app:kkt}.

\subsection{A posteriori error estimation methodology}\label{sec:errest}
Our goal is to estimate the error in the optimal solution ${\widehat\theta}^{\rm a} - \theta^{\rm a}$. Specifically, we seek to estimate the errors in the quantity of interest $\mathcal{E}\left(\theta^{\rm a}\right)$	
\begin{equation}
\label{eqn:errorECont}
\Delta \mathcal{E}  = \mathcal{E}({\widehat\theta}^{\rm a}) - \mathcal{E}\left(\theta^{\rm a}\right)
\end{equation}
due to the errors in both the model and the data. The first order necessary conditions for the perturbed inverse problem \eqref{eqn:iperr} are given by the equations in \eqref{eqn:kktP}
and consist of the perturbed forward, adjoint, and optimality equations. 

The errors in the optimal solution \eqref{eqn:errorECont}  are the result of
errors in the adjoint model \eqref{eqn:adjoint}, in the forward model \eqref{eqn:odekkt}, and 
in the optimality equation \eqref{eqn:opt}, i.e., to differences between the perturbed and the perfect equations.
This leads to the following change in error functional resulting from model and data errors
\begin{equation}\label{eqn:errfuncerr}
 \Delta \mathcal{E} = \Delta \mathcal{E}_{\rm adj} + \Delta \mathcal{E}_{\rm fwd} + \Delta \mathcal{E}_{\rm opt}\,.
\end{equation}
The perturbed super-Lagrangian  can be written as:
\begin{eqnarray}\label{eqn:supLagPert}
 \mathcal{\widehat{L}_E} &= & \,\mathcal{E}(\widehat{\theta}^{\rm a}) 
  -\displaystyle\int\limits_{t_0}^{t_F}\,\nu^{\rm T} \cdot \left(-\widehat{\x}' + \widehat{f} + \Delta \widehat{f}\right)\,\mathrm{d}t    \\
\nonumber
&& - \nu^{\rm T}\left(t_0 \right) \cdot \left( \widehat{\x}(t_0) - \x_0 - \Delta\x_0 \right) \\
\nonumber
 && - \displaystyle\int\limits_{t_0}^{t_F}\,\mu^{\rm T} \cdot \left(\widehat{\lambda}'+\widehat{r}_\x^{\rm T} 
 + \Delta \widehat{r}_\x^{\rm T}  +  \left(\widehat{f}_\x + \Delta \widehat{f}_\x \right)^{\rm T} \cdot \widehat{\lambda} \right) \, \mathrm{d}t \\
\nonumber
&& - \mu^{\rm T}\left( t_F \right)  \cdot \left( \widehat{\lambda}\left( t_F \right) - \widehat{w}^{\rm T}_\x\left(\widehat{\x},\theta \right) - \Delta \widehat{w}^{\rm T}_\x\left(\widehat{\x},\theta \right) \right) \\
\nonumber
&& - \displaystyle\int\limits_{t_0}^{t_F}\,\zeta^{\rm T} \cdot \left(\widehat{\xi}' + \widehat{r}^{\rm T}_\theta\left(\widehat{\x},\theta\right) + \Delta \widehat{r}^{\rm T}_{\theta}\left(\widehat{\x},\theta\right) 
+ \left(\widehat{f}_{\theta} + \Delta \widehat{f}_{\theta} \right)^{\rm T} \cdot \widehat{\lambda} \right) \, \mathrm{d}t \\
\nonumber
&&-\zeta^{\rm T} \cdot \left(\widehat{\xi}\left(t_F \right) - \widehat{w}^{\rm T}_{\theta} - \Delta \widehat{w}^{\rm T}_{\theta}  \right) \\
\nonumber
&&-\zeta^{\rm T} \cdot \left(\widehat{\xi}\left(t_0 \right) + \widehat{\x}_\theta^{\rm T}(t_0)\cdot \widehat{\lambda} (t_0) \right).
\end{eqnarray}
We have denoted by hat the functions evaluated at $\widehat{\x}$, e.g., $\widehat{f}=f(t,\widehat{\x},\theta)$.
The gradient of the super-Lagrangian at the optimal solution is the same as the gradient of the error functional, both being zero. Hence we have, 
\begin{equation}\label{eqn:dE}
 \Delta\mathcal{E} \approx \widehat{\mathcal{L}}_{\mathcal{E}} - \mathcal{L_E}\,.
\end{equation}
The approximate contribution to the error brought by the adjoint model is given by
\begin{equation}\label{eqn:deadj}
 \Delta \mathcal{E}_{\rm adj} \approx \displaystyle \int_{t_0}^{t_F}\mu ^{\rm T} \cdot \left(\Delta \widehat{r}^{\rm T}_{\x} + \Delta \widehat{f}_{\x}^{\rm T} \cdot \widehat{\lambda} \right) \, dt- \mu ^{\rm T} \cdot \Delta \widehat{w}^{\rm T}_{\x} |_{t_F}\,.
\end{equation}
The approximate contribution to the error brought by the forward model only depends on model errors, and is given by:
\begin{equation}\label{eqn:defwd}
 \Delta \mathcal{E}_{\rm fwd} \approx \displaystyle \int_{t_0}^{t_F}\nu ^{\rm T} \cdot \Delta \widehat{f} \, dt\,.
\end{equation}
The contribution to the error by the optimality equation can be computed from equation \eqref{eqn:supLag}, and is given by:
\begin{equation}\label{eqn:deopt}
 \Delta \mathcal{E}_{\rm opt} \approx \displaystyle \int_{t_0}^{t_F}\zeta^{\rm T} \cdot \left( \Delta \widehat{r}^{\rm T}_\theta - \Delta \widehat{f}_\theta^{\rm T} \cdot \widehat{\lambda} \right) \, dt - \zeta^{\rm T} \cdot \Delta \widehat{w}^{\rm T}_{\theta} |_{t_F}\,.
\end{equation}
Appendix \ref{App:fdm} demonstrates that equations \eqref{eqn:deadj}, \eqref{eqn:defwd}, and  \eqref{eqn:deopt} correspond to first order error estimates.

\section{Inverse problems with discrete-time models}\label{sec:discretemodels}
Consider a time-evolving system governed by the following discrete-time model
\begin{equation}
\label{eqn:model}
\x _{k+1} = \Model_{k,k+1}(\x_k,\theta) , \quad k=0,\dots , N-1\,,  \quad \x_0 = \x_0(\theta)\,,
\end{equation}
where $\x_k \in \mathbbm{R}^n$ is the state vector at time $t_k$, $\Model_{k,k+1}$ is the solution operator that advances the state vector from time $t_k$ to $t_{k+1}$, 
and $\theta \in \mathbbm{R}^m$ is the vector of model parameters. At each time $t_k$ the model state approximates the truth, i.e., the state of the physical system, $\x_k \approx \x(t_k)$.

A cost function defined on the solution and on the parameters of \eqref{eqn:model} has the general form
\begin{equation}
\label{eqn:cfd}
 \J\left(\x, \theta\right) = \displaystyle \sum_{k=0}^N\, r_k\left(\x_k,\theta\right).
\end{equation}
For example, in four dimensional variational data assimilation \cite{kalnay2003,Sandu:2011A} the cost function is
 \begin{eqnarray}
 \label{eqn:4D-Var}
 {\J}(\x_0) &=&
 \frac{1}{2}\, \left(\x_0-\x^{\rm b}_0(\theta)\right)^{\rm T} \, \B_0^{-1}(\theta)\, \left(\x_0-\x^{\rm b}_0(\theta)\right)
 \\
\nonumber
 && +
 \sum_{k=0}^N\; \frac{1}{2}\, \left(\mathcal{H}_k (\x_k,\theta) - \y_k\right)^{\rm T}
\R_k^{-1}(\theta)\left(\mathcal{H}_k (\x_k,\theta) - \y_k\right)\,,
 \end{eqnarray}
where, $\x_0^{\rm b}$ is the background state at the initial time (the prior knowledge of the initial conditions), $\mathbf{B}_0$ is the covariance matrix of the background errors, $\y_k$ is the vector of observations at time $t_k$ and $\mathbf{R}_k$ is the corresponding observation error covariance matrix. The observation operators $\mathcal{H}_k$ map the model state space onto the observation space. 
The cost function \eqref{eqn:4D-Var} measures the departure of the initial state $\x_0$ from the background initial state, as well as the discrepancy between the model predictions and measurements of reality $\y_k$ at $t_k$ for $k \ge 1$. The norms of the differences are weighted by the corresponding inverse background error covariance matrices.

An inverse problem that seeks the optimal values of the model parameters is formulated as follows:
\begin{equation}
\label{eqn:ipdiscrete}
 \theta^{\rm a}  =  \underset{\theta} {\text{arg\,min}} ~~ \J\left(\x, \theta\right) \quad
   \text{subject to \eqref{eqn:model}\,.} 
\end{equation}
For example the optimal parameter values lead to a best fit between model predictions and measurements, in a least squares sense.
%

\subsection{First order optimality conditions}\label{sec:discKKT}
The Lagrangian function associated with the problem \eqref{eqn:ipdiscrete} is
\begin{eqnarray}
 \label{eqn:LagDisc}
 \mathcal{L} &=& \displaystyle \sum_{k=0}^{N-1}\, \left(r_k\left(\x_k,\theta\right) - \lambda^{\rm T}_{k+1} \cdot \left(\x _{k+1} - \Model_{k,k+1}(\x_k,\theta)\right)\right) \\ 
 \nonumber &&+ r_N\left(\x_N, \theta\right) - \lambda_0^{\rm T} \cdot \left(\x_0 - \x_0 \left(\theta\right)\right).
\end{eqnarray}
Consider the following Jacobians of the model solution operator with respect to the state and with respect to parameters, respectively:
\begin{equation}
\label{eqn:model-Jacobians}
\M_{k,k+1}(\x,\theta) := \bigl(\Model_{k,k+1}(\x,\theta)\bigr)_{\x}, \qquad \mathfrak{M}_{k,k+1}(\x,\theta) := \bigl(\Model_{k,k+1}(\x,\theta)\bigr)_{\theta}.
\end{equation}
Consider also the Jacobians of the cost function terms
\begin{equation}
\label{eqn:cost-Jacobians}
 \left(r_k\right)_{\x_k} := \left. \left(r_k\left(\x, \theta\right)\right)_{\x} \, \right|_{\x = \x_k},
\quad
\left(r_k\right)_{\theta} := \left. \left(r_k\left(\x, \theta\right)\right)_{\theta}\, \right|_{\x = \x_k}.
\end{equation}
Setting to zero the variations of $\mathcal{L}$ with respect to the independent perturbations $\delta\lambda$, $\delta \x$, and $\delta\theta$ leads to the first order optimality conditions for the inverse problem \eqref{eqn:ipdiscrete}: 
\begin{subequations}
\label{eqn:kktDisc}
\begin{align}
\label{eqn:odekktDisc}
\textnormal{forward model:}~~ & 0 =  \x _{k+1}-\Model_{k,k+1}(\x_k,\theta), \quad k=0,\dots , N-1\,;  \\
\label{eqn:adjointDisc}
\textnormal{adjoint model:}~~& 0 = \lambda_{N}-\left(r_N \right)^{\rm T}_{\x_N} \,,  \\
\nonumber
& 0 = {\lambda}_k - \M_{k,k+1}^{\rm T}\, \lambda_{k+1} - \left(r_k\right)^{\rm T}_{\x_k} , \quad k=N-1,\dots , 0; \\
\label{eqn:optDisc}
\textnormal{optimality:} ~~& 0 = (\x_{0})_{\theta}^{\rm T}\lambda_0  + \displaystyle \sum_{k=0}^{N} \,\left(r_k\right)^{\rm T}_{\theta} + \displaystyle \sum_{k=0}^{N-1}\, \mathfrak{M}_{k,k+1}^{\rm T} \lambda_{k+1}\,.
\end{align}
\end{subequations}
Here ${\lambda}_k \in \mathbbm{R}^n$ are the adjoint variables. 
A detailed derivation of the first order optimality conditions can be found in the Appendix A of \cite{Rao:2014_Supp}. 
%
\subsection{Perturbed inverse problem with discrete-time models}\label{sec:errestDisc}
In practice the evolution of the physical system is represented by the imperfect discrete model 
\begin{equation}
\label{eqn:model-p}
\widehat{\x} _{k+1} = \Model_{k,k+1}(\widehat{\x}_k, \theta) + \Delta\widehat{\x}_{k+1}(\widehat{\x}_k, \theta), \quad k=0,1,\dots , N-1\,.
\end{equation}
Errors in the data lead to the following perturbed cost function:
\begin{equation}\label{eqn:cf-p}
\widehat{\J}\left(\widehat{\x}, \theta\right) = \displaystyle \sum_{k=0}^N\, \bigl( r_k\left(\widehat{\x}_k,\theta\right) + \Delta\widehat{r}_k\left(\widehat{\x}_k,\theta\right) \bigr).
\end{equation}
The perturbed inverse problem solved in practice reads:
\begin{equation}\label{eqn:ipdiscreteErr}
 \widehat{\theta}^{\rm a}  =  \underset{\theta} {\text{arg\,min}}~~ \widehat{\J}\left( \widehat{\x}, \theta\right)\quad
  \text{subject to \eqref{eqn:model-p}\,.}
\end{equation}
%
%
We consider the model Jacobians \eqref{eqn:model-Jacobians} evaluated at the perturbed state and parameters:
\[
\widehat{\M}_{k,k+1} := \M_{k,k+1}(\widehat{\x},\theta) , \qquad \widehat{\mathfrak{M}}_{k,k+1} := \mathfrak{M}_{k,k+1}(\widehat{\x},\theta).
\]
We also consider the cost function Jacobians \eqref{eqn:cost-Jacobians} evaluated at the perturbed state and parameters:
\[
 \left(\widehat{r}_k\right)_{\widehat{\x}_k} := \left. \left(r_k\left(\x, \theta\right)\right)_{\x} \, \right|_{\x = \widehat{\x}_k},
\quad
\left(\widehat{r}_k\right)_{\theta}:= \left. \left(r_k\left(\x, \theta\right)\right)_{\theta} \, \right|_{\x = \widehat{\x}_k}.
\]
%
%
The first order optimality conditions for the perturbed inverse problem \eqref{eqn:ipdiscreteErr} are: 
\begin{subequations}
\label{eqn:kktDiscP}
\begin{alignat}{3}
\label{eqn:odekktDiscP}
\textnormal{forward model:}\qquad &  \Delta \widehat{\x}_{k+1} &=  \widehat{\x}_{k+1}-\Model_{k,k+1}(\widehat{\x}_k, \theta), \quad k=0,\dots , N-1; 
\end{alignat}
\begin{alignat}{4}
\label{eqn:adjointDiscP}
&\textnormal{adjoint model:}& \left(\Delta\widehat{r}_N\right)^{\rm T}_{\widehat{\x}_N} &= \widehat{\lambda}_{N}-\left(\widehat{r}_N\right)^{\rm T}_{\widehat{\x}_N},   \\
\nonumber
&&\left(\Delta\widehat{r}_k\right)^{\rm T}_{\widehat{\x}_k} 
+ \left(\Delta \widehat{\x}_{k+1}\right)^{\rm T}_{\widehat{\x}_k}\, \widehat{\lambda}_{k+1} &= \widehat{{\lambda}}_k - \widehat{\M}_{k,k+1}^{\rm T}\, \widehat{\lambda}_{k+1} -  \left(\widehat{r}_k\right)^{\rm T}_{\widehat{\x}_k} \\
\nonumber &&& \qquad \,  k=N-1,\dots , 0; 
\end{alignat}
\begin{alignat}{4}
\label{eqn:optDiscP}
&\textnormal{optimality:} ~~&  \displaystyle \sum_{k=0}^{N} \,  \left( \Delta \widehat{r}_k \right)^{\rm T}_{\theta}
- \sum_{k=0}^{N-1}\, \left(\Delta \widehat{\x}_{k+1}\right)^{\rm T}_{\theta}\, \widehat{\lambda}_{k+1}
&= \left(\widehat{\x}_{0}\right)_{\theta}^{\rm T}\widehat{\lambda}_0  + \displaystyle \sum_{k=0}^{N} \,\left(\widehat{r}_k\right)^{\rm T}_{\theta}  \\
\nonumber &&& \qquad + \displaystyle \sum_{k=0}^{N-1}\, \widehat{\mathfrak{M}}_{k,k+1}^{\rm T} \widehat{\lambda}_{k+1}.
\end{alignat}
\end{subequations}

The perturbed optimality conditions \eqref{eqn:kktDiscP} differ in two ways from the ideal optimality conditions \eqref{eqn:kktDisc}.
First, the perturbations due to the error terms $\Delta\x$ and $\Delta\widehat{r}$ appear on the left hand side as residuals in each of the forward \eqref{eqn:odekktDiscP}, adjoint \eqref{eqn:adjointDiscP}, and optimality equations \eqref{eqn:optDiscP}.
Next, the linearizations in \eqref{eqn:adjointDiscP} and \eqref{eqn:optDiscP} are performed about the perturbed solution $\widehat{\x}$ and $\widehat{\theta}^{\rm a}$, while the linearizations in \eqref{eqn:adjointDisc} and \eqref{eqn:optDisc} are performed about the ideal solution $\x$ and $\theta^{\rm a}$.

\subsection{Quantity of interest}\label{sec:qoi}
Consider a quantity of interest (\qoi) defined by a scalar functional $\mathcal{E} : \mathbb{R}^m \to \mathbb{R}$ that measures a certain aspect of the the optimal parameter value 
\begin{equation}
\label{eqn:error_functional}
\textnormal{\qoi} = \mathcal{E}\left(\theta^{\rm a}\right)\,.
\end{equation}
An example of error functional \eqref{eqn:error_functional} is the $\ell$-th component of the optimal parameter vector, $\mathcal{E}\left(\theta^{\rm a}\right) = \theta^{\rm a}_{\rm \ell}$.

We are interested in estimating the impact of observation and model errors on the optimal solution $\theta^{\rm a}$, or, more specifically, the error impact on the aspect of $\theta^{\rm a}$ captured by the \qoi. The error in the \qoi is
\begin{equation}
\label{eqn:errorE}
\Delta \mathcal{E}  = \mathcal{E}({\widehat\theta}^{\rm a}) - \mathcal{E}\left(\theta^{\rm a}\right)
\end{equation}
where $\widehat{\theta}^{\rm a}$ and $\theta^{\rm a}$ are the solutions of the perturbed inverse problem \eqref{eqn:ipdiscreteErr} 
and of the ideal inverse problem \eqref{eqn:ipdiscrete}, respectively. 

\section{Aposteriori error estimation}\label{sec:aposterioriErr}

The ideal optimal solution $\theta^{\rm a}$ is obtained (in principle) by solving the nonlinear system \eqref{eqn:kktDisc},
while the perturbed optimal solution $\widehat{\theta}^{\rm a}$ is obtained by solving the system \eqref{eqn:kktDiscP}.
We have seen that \eqref{eqn:kktDiscP} is obtained from \eqref{eqn:kktDisc} by adding residuals to each of the optimality equations. The a posteriori error estimate quantifies, to first order, the impact of these residuals on the solution of the nonlinear system \eqref{eqn:kktDisc}. The methodology presented below follows the approach discussed in \cite{Alexe:2011,becker2005mesh}.

\subsection{The error estimation procedure}\label{sec:estimation}

It is useful to consider the reduced cost function \eqref{eqn:cfd}
\begin{equation}
\label{eqn:cfd-reduced}
j(\theta) =  \J\left(\x(\theta), \theta\right) = \displaystyle \sum_{k=0}^N\, r_k\left(\x_k(\theta),\theta\right)
\end{equation}
with the solution dependency on the parameters given by the model \eqref{eqn:model}. 

\begin{theorem}[A posteriori error estimates]\label{thm:existence}
Assume that the model operator $\mathcal{M}$ and the functions $r_k$ are twice continuously differentiable. Assume also that reduced Hessian $(\nabla^2_{\theta,\theta} j)(\theta^{\rm a}) \in \mathbbm{R}^{m \times m}$ evaluated at the minimizer of \eqref{eqn:ipdiscrete} is positive definite.

Then there exist ``impact factors'' $\zeta \in \mathbb{R}^m$, $\mu_{k}\in \mathbb{R}^n$ for $k=0,\dots,N$, and $\nu_{k}\in \mathbb{R}^n$ for $k=0,\dots,N$
such that the error in the \qoi is approximated to first order by the formula:
\begin{subequations}
\label{eqn:E-error-estimate}
\begin{eqnarray}
\label{eqn:deFinal}
 \Delta \mathcal{E} &\approx&   \Delta \mathcal{E}^{\rm est}  = \Delta\mathcal{E}_{\rm fwd} + \Delta\mathcal{E}_{\rm adj} + \Delta \mathcal{E}_{\rm opt},
\end{eqnarray}
where the three terms are the contributions of errors in the forward model, adjoint model, and optimality equation, respectively.
Specifically, the estimated contribution of the error in the forward model to the error in \qoi is:
\begin{eqnarray}
\label{eqn:de-fwdDisc}
  \Delta\mathcal{E}_{\rm fwd} & = & \displaystyle\sum_{k=0}^{N-1}\,\nu_{k+1}^{\rm T} \cdot \Delta \widehat{\x}_{k+1} \,.
\end{eqnarray}
Similarly, the estimated contribution of the adjoint model error to the error in \qoi is:
 \begin{eqnarray}
  \label{eqn:de-adjDisc}
  \Delta\mathcal{E}_{\rm adj} & = & \sum_{k=0}^{N}\mu_k^{\rm T} \cdot \left(\Delta\widehat{r}_k \right)^{\rm T}_{\x_k} +  \sum_{k=0}^{N-1}\mu_k^{\rm T} \cdot \left(\Delta \widehat{\x}_{k+1}\right)_{{\x}_k}^{\rm T} \widehat{\lambda}_{k+1}  \,.
 \end{eqnarray}
Finally, the contribution of the error in the optimality equation is given by
\begin{eqnarray}
 \label{eqn:de-optDisc}
 \Delta \mathcal{E}_{\rm opt} & = & \zeta^{\rm T} \cdot \left(\sum_{k=0}^{N} \,  \left( \Delta \widehat{r}_k \right)^{\rm T}_{\theta}
- \sum_{k=0}^{N-1}\, \left(\Delta \widehat{\x}_{k+1}\right)^{\rm T}_{\theta}\, \widehat{\lambda}_{k+1} \right) \,.
\end{eqnarray}
\end{subequations}
\end{theorem}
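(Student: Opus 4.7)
The plan is to mimic the continuous-time super-Lagrangian construction of Section 2.2 in the fully discrete setting, so that the impact factors $\zeta$, $\mu_k$, $\nu_k$ arise as multipliers enforcing the (unperturbed) KKT conditions \eqref{eqn:kktDisc}. Concretely, I would define the discrete super-Lagrangian
\begin{eqnarray*}
\mathcal{L^E} &=& \mathcal{E}(\theta^{\rm a}) - \sum_{k=0}^{N-1} \nu_{k+1}^{\rm T}\bigl(\x_{k+1}-\Model_{k,k+1}(\x_k,\theta)\bigr) - \nu_0^{\rm T}\bigl(\x_0 - \x_0(\theta)\bigr) \\
&& - \mu_N^{\rm T}\bigl(\lambda_N - (r_N)_{\x_N}^{\rm T}\bigr) - \sum_{k=0}^{N-1}\mu_k^{\rm T}\bigl(\lambda_k - \M_{k,k+1}^{\rm T}\lambda_{k+1} - (r_k)_{\x_k}^{\rm T}\bigr) \\
&& - \zeta^{\rm T}\Bigl((\x_0)_\theta^{\rm T}\lambda_0 + \sum_{k=0}^N (r_k)_\theta^{\rm T} + \sum_{k=0}^{N-1}\mathfrak{M}_{k,k+1}^{\rm T}\lambda_{k+1}\Bigr).
\end{eqnarray*}
At the ideal KKT point every bracketed residual vanishes, so $\mathcal{L^E}=\mathcal{E}(\theta^{\rm a})$ identically in $\zeta,\mu,\nu$.

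Next I would pin down $\zeta,\mu,\nu$ from the stationarity conditions $\nabla_\theta \mathcal{L^E}=0$, $\nabla_{\x_k}\mathcal{L^E}=0$, $\nabla_{\lambda_k}\mathcal{L^E}=0$. The $\theta$-variation collects all parameter-derivative contributions and, after using the discrete adjoint/TLM identities, reduces to a single linear system of the form $(\nabla^2_{\theta,\theta} j)(\theta^{\rm a})\,\zeta = \mathcal{E}_\theta$; the positive-definiteness hypothesis on the reduced Hessian gives a unique $\zeta$. With $\zeta$ in hand, the $\lambda_k$-variations yield a forward discrete TLM-type recurrence $\mu_{k+1} = \widehat{\M}_{k,k+1}\mu_k + \mathfrak{M}_{k,k+1}\zeta$ with initial condition determined by the $(\x_0)_\theta\zeta$ term, and the $\x_k$-variations yield a backward SOA-type recurrence for $\nu_k$ driven by second derivatives of $r_k$ and $\Model_{k,k+1}$ contracted with $\mu_k$, $\zeta$, and the base adjoint $\lambda$. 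Each recurrence is an explicit linear sweep, so $\mu_k$ and $\nu_k$ exist and are unique.

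Having constructed the multipliers, I would form the analogous \emph{perturbed} super-Lagrangian $\widehat{\mathcal{L^E}}$ by replacing each bracketed KKT residual with its perturbed counterpart from \eqref{eqn:kktDiscP}, i.e.\ by inserting $\Delta\widehat{\x}_{k+1}$ into the forward block, $(\Delta\widehat{r}_k)_{\widehat{\x}_k}^{\rm T}+(\Delta\widehat{\x}_{k+1})_{\widehat{\x}_k}^{\rm T}\widehat{\lambda}_{k+1}$ into the adjoint block, and $\sum(\Delta\widehat{r}_k)_\theta^{\rm T} - \sum(\Delta\widehat{\x}_{k+1})_\theta^{\rm T}\widehat{\lambda}_{k+1}$ into the optimality block. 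Because the perturbed KKT equations \eqref{eqn:kktDiscP} hold at $\widehat{\theta}^{\rm a}$, we again have $\widehat{\mathcal{L^E}}=\mathcal{E}(\widehat{\theta}^{\rm a})$. Subtracting and using the key identity
\begin{equation*}
\Delta \mathcal{E} = \mathcal{E}(\widehat{\theta}^{\rm a}) - \mathcal{E}(\theta^{\rm a}) = \widehat{\mathcal{L^E}} - \mathcal{L^E},
\end{equation*}
I expand $\widehat{\mathcal{L^E}} - \mathcal{L^E}$ around the ideal solution. By construction of $\zeta,\mu,\nu$ through stationarity, the variations in $\theta,\x,\lambda$ contribute only at second order; the surviving first-order contributions are exactly the pairings of the impact factors with the perturbation residuals, producing \eqref{eqn:de-fwdDisc}, \eqref{eqn:de-adjDisc}, and \eqref{eqn:de-optDisc}.

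The main obstacle I anticipate is the bookkeeping in the two stationarity derivations, especially showing that the $\theta$-variation collapses exactly to the reduced-Hessian system (which requires chaining the discrete TLM identity for $(\x_k)_\theta$ with the adjoint recurrence and being careful with the endpoint terms $\nu_0$, $\mu_N$, $(\x_0)_\theta$). A secondary subtlety is justifying that evaluating $\mu,\nu,\zeta$ at the ideal solution, and the perturbation residuals at the perturbed solution, introduces only $O(\|\Delta\|^2)$ error; this follows from a Taylor expansion argument analogous to the one sketched in Appendix \ref{App:fdm} for the continuous case, leveraging the $C^2$ hypothesis on $\Model$ and $r_k$ together with the invertibility of the reduced Hessian to control $\widehat{\theta}^{\rm a}-\theta^{\rm a}$.
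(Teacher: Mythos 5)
Your proposal follows the paper's own proof essentially verbatim: the same discrete super-Lagrangian built on the ideal KKT conditions \eqref{eqn:kktDisc}, the same stationarity conditions yielding the Hessian equation, the TLM, and the SOA recurrences for $\zeta$, $\mu_k$, $\nu_k$ (with existence from positive definiteness of the reduced Hessian), and the same first-order comparison of super-Lagrangian values at the ideal and perturbed optima, using \eqref{eqn:kktDiscP} to turn the brackets into residual pairings that give \eqref{eqn:de-fwdDisc}--\eqref{eqn:de-optDisc}. The only discrepancy is a sign-convention slip in your TLM sketch ($\mu_{k+1}=\M_{k,k+1}\,\mu_k+\mathfrak{M}_{k,k+1}\,\zeta$ versus the paper's $\mu_k=\M_{k-1,k}\,\mu_{k-1}-\mathfrak{M}_{k-1,k}\,\zeta$ with $\mu_0=-\left(\x_0\right)_{\theta}\zeta$), which the stationarity bookkeeping you defer would correct.
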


\begin{proof}
A discrete super-Lagrangian associated with the scalar functional  \eqref{eqn:errorE} and with the constraints posed by the first order optimality conditions \eqref{eqn:kktDisc} is
defined as follows: 
\begin{eqnarray}
\nonumber
\mathcal{L^E}(\theta,\x,\lambda,\mu,\nu,\zeta) &=& \mathcal{E}(\theta) - \nu_0^{\rm T} \cdot\left(\x_0 - \x_0\left(\theta\right)\right) - \sum_{k=0}^{N-1} \nu_{k+1}^{\rm T} \cdot \left(\x _{k+1}-\Model_{k,k+1}(\x_k,\theta) \right)  \\
\label{eqn:supLagDisc}
 &&- \mu_N^{\rm T} \cdot \left(  \lambda_{N}- \left(r_N \right)^{\rm T}_{\x_N}\right) 
  -\sum_{k=0}^{N-1} \mu_k^{\rm T} \cdot \left( {\lambda}_k - \M_{k,k+1}^{\rm T}\, \lambda_{k+1} - \left(r_k\right)^{\rm T}_{\x_k} \right) \\
\nonumber
 &&  -\zeta^{\rm T} \cdot \left( \left(\x_{0}\right)_{\theta}^{\rm T}\lambda_0 + \displaystyle \sum_{k=0}^{N} \,\left(r_k \right)_{\theta}+ \displaystyle \sum_{k=0}^{N-1}\, \mathfrak{M}_{k,k+1}^{\rm T} \lambda_{k+1}\,\right) \,.
\end{eqnarray}
Consider a stationary point $(\theta^{\rm a},\x,\lambda,\mu,\nu,\zeta)$ of the super-Lagrangian $\mathcal{L^E}$ 
\begin{equation}
\label{eqn:stationarity}
\delta \mathcal{L^E} \left|_{(\theta^{\rm a},\x,\lambda,\mu,\nu,\zeta)} \right.  = 0.
\end{equation}
Setting to zero the variations of \eqref{eqn:supLagDisc} with respect to $\mu,\nu,\zeta$ shows that the parameter vector $\theta$, the forward solution $\x$, and the adjoint solution $\lambda$ satisfy the first order optimality conditions \eqref{eqn:kktDisc}. Consequently $\{\theta^{\rm a},\x=\x(\theta^{\rm a}),\lambda=\lambda(\theta^{\rm a})\}$ is the solution of the inverse problem \eqref{eqn:ipdiscrete}. The super-Lagrange multipliers $\zeta$, $\nu$, and $\mu$ for a stationary point of the super-Lagrangian are calculated by setting to zero the variations of  \eqref{eqn:supLagDisc} with respect to $\theta,\x,\lambda$, as discussed in section \ref{sec:supLagDisc}. From \eqref{eqn:supLagDisc} we have that
\begin{equation}
\label{eqn:supLagDisca}
\mathcal{L^E}(\theta^{\rm a},\x,\lambda,\mu,\nu,\zeta) = \mathcal{E}(\theta^{\rm a}).
\end{equation}

We now evaluate \eqref{eqn:supLagDisc} at the solution $\{\widehat{\theta}^{\rm a},\widehat{\x}=\widehat{\x}(\widehat{\theta}^{\rm a}),\widehat{\lambda}=\widehat{\lambda}(\widehat{\theta}^{\rm a})\}$ of the perturbed inverse problem. The super-multipliers $\zeta$, $\eta$, and $\mu$ are not changed and they correspond to the stationary point at the ideal solution \eqref{eqn:stationarity}. We have:
\begin{eqnarray}
\nonumber
\mathcal{L^E}(\widehat{\theta}^{\rm a},\widehat{\x},\widehat{\lambda},\mu,\nu,\zeta) &=& \mathcal{E}(\widehat{\theta}^{\rm a}) - \sum_{k=0}^{N-1} \nu_{k+1}^{\rm T} \cdot  \left(\widehat{\x} _{k+1}-\Model_{k,k+1}(\widehat{\x}_k,\widehat{\theta}^{\rm a}) \right)   \\
 &&- \nu_0^{\rm T} \cdot \left(\widehat{\x}_0 - \x_0(\widehat{\theta}^{\rm a})\right) - \mu_N^{\rm T} \cdot \left(  \widehat{\lambda}_{N} - \left(\widehat{r}_N \right)^{\rm T}_{\widehat{\x}_N} \right)  \nonumber \\
\label{eqn:supLagDiscPert1}
&& -\sum_{k=0}^{N-1} \mu_k^{\rm T} \cdot \left( {\widehat{\lambda}}_k   - \widehat{\M}_{k,k+1}^{\rm T} \, \widehat{\lambda}_{k+1} - \left(\widehat{r}_k\right)^{\rm T}_{\widehat{\x}_k}  \right) \\
\nonumber
 \nonumber &&-\zeta^{\rm T} \cdot \left( \left(\widehat{\x}_{0}\right)_{\theta}^{\rm T}\widehat{\lambda}_0  + \displaystyle \sum_{k=0}^{N} \,\left(\widehat{r}_k \right)^{\rm T}_{\theta} + \sum_{k=0}^{N-1}\, \left(\widehat{\mathfrak{M}}_{k,k+1}^{\rm T} \right) \widehat{\lambda}_{k+1}\right) \,.
\end{eqnarray}

The perturbed inverse problem solution  $\{\widehat{\theta}^{\rm a},\widehat{\x},\widehat{\lambda}\}$ satisfies the perturbed first order optimality conditions \eqref{eqn:kktDiscP}. Substituting \eqref{eqn:kktDiscP} in \eqref{eqn:supLagDiscPert1} leads to
\begin{eqnarray}
\nonumber
\mathcal{L^E}(\widehat{\theta}^{\rm a},\widehat{\x},\widehat{\lambda},\mu,\nu,\zeta) &=& \mathcal{E}(\widehat{\theta}^{\rm a}) - \sum_{k=0}^{N-1} \nu_{k+1}^{\rm T} \cdot  \Delta\widehat{\x} _{k+1}  - \mu_N^{\rm T} \cdot \left(\Delta\widehat{r}_N\right)^{\rm T}_{\widehat{\x}_N}   \nonumber \\
\label{eqn:supLagDiscPert2}
&& -\sum_{k=0}^{N-1} \mu_k^{\rm T} \cdot \left( \left(\Delta\widehat{r}_k\right)^{\rm T}_{\widehat{\x}_k} 
+ \left(\Delta \widehat{\x}_{k+1}\right)^{\rm T}_{\widehat{\x}_k}\, \widehat{\lambda}_{k+1}  \right) \\
\nonumber
 \nonumber &&-\zeta^{\rm T} \cdot \left(\sum_{k=0}^{N} \,  \left( \Delta \widehat{r}_k \right)^{\rm T}_{\theta}
- \sum_{k=0}^{N-1}\, \left(\Delta \widehat{\x}_{k+1}\right)^{\rm T}_{\theta}\, \widehat{\lambda}_{k+1} \right) \,.
\end{eqnarray}

Since the super-Lagrangian is stationary at $(\theta^{\rm a},\x,\lambda,\mu,\nu,\zeta)$ its variation vanishes \eqref{eqn:stationarity}, therefore to first order it holds that 
\begin{equation}
\label{eqn:stationarity-delta}
\Delta\mathcal{L^E}=\mathcal{L^E}(\widehat{\theta}^{\rm a},\widehat{\x},\widehat{\lambda},\mu,\nu,\zeta) - 
\mathcal{L^E}(\theta^{\rm a},\x,\lambda,\mu,\nu,\zeta) \approx 0.
\end{equation}
Subtracting \eqref{eqn:supLagDisca} from \eqref{eqn:supLagDiscPert2} and using the stationarity relation \eqref{eqn:stationarity-delta}  leads to the  error estimate
\eqref{eqn:E-error-estimate}.

The existence of the super-Lagrange multipliers follows from Theorem \ref{thm:calculation} discussed in the next section.
Specifically, the Hessian equation \eqref{eqn:supLagParamsDisc} has a unique solution, and so do the tangent linear model \eqref{eqn:TLMDisc}
and the second order adjoint model \eqref{eqn:soaDisc}. The multipliers exist and can be calculated by Algorithm \ref{alg:discrete}.
\end{proof}

\subsection{Calculation of super--Lagrange multipliers}\label{sec:supLagDisc}

\begin{theorem}[Calculation of impact factors]\label{thm:calculation}
When the assumptions of Theorem \ref{thm:existence} hold the super-Lagrange multipliers corresponding to a stationary point of \eqref{eqn:supLagDisc} are computed via the following steps. First, solve the following linear system for the multiplier $\zeta  \in \mathbbm{R}^m$:
\begin{subequations}
\label{eqn:supLagParamsDisc}
\begin{equation}
\label{eqn:linearSys}
 (\nabla^2_{\theta,\theta} j)(\theta^{\rm a}) \cdot \zeta = \mathcal{E}_{\theta}^T\,,
 \end{equation}
whose matrix  is the reduced Hessian $\nabla^2_{\theta,\theta} j \in \mathbbm{R}^{m \times m}$ evaluated at the minimizer $\theta^{\rm a}$.
We call \eqref{eqn:linearSys} the ``Hessian equation''. 
Next, solve the following tangent linear model (TLM) for the multipliers $\mu_k \in \mathbb{R}^n$, $k=0,\dots,N$:
\begin{eqnarray}
\label{eqn:TLMDisc}
\mu_0 &=& - \left(\x_{0}\right)_{\theta}\zeta \,;  \\
\nonumber
\mu_k &=& \M_{k-1,k}\,  \mu_{k-1} - \mathfrak{M}_{k-1,k}\, \zeta ,   \qquad k=1,\dots , N\,.
\end{eqnarray}
%
%
Finally, solve the following second order adjoint model (SOA) for the multipliers $\nu_k \in \mathbb{R}^n$, $k=N,\dots,0$:
\begin{eqnarray}
\label{eqn:soaDisc}
\nonumber
\nu_N &=& \left(r_N\right)_{\x_N, \x_N} \mu_N -  \left(r_N\right)_{\theta, \x_N} \zeta\,;  \\
\nu_k &=&  \M_{k,k+1}^{\rm T} \nu_{k+1} + \left(\M_{k,k+1}^{\rm T} \lambda_{k+1}\right)^{\rm T}_{\x_k}\mu_{k}\\
\nonumber~~&& - \left(r_k \right)_{\theta, \x_k} \zeta - \left(\mathfrak{M}^{\rm T}_{k,k+1}  \lambda_{k+1}\right)^{\rm T}_{\x_k} \,\zeta,  \qquad k = N-1,\dots,0\,.
\end{eqnarray}
\end{subequations}
\end{theorem}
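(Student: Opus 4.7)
The plan is to impose the stationarity condition \eqref{eqn:stationarity} of the super-Lagrangian \eqref{eqn:supLagDisc} by setting to zero the variations with respect to $\lambda$, $\x$, and $\theta$ in turn. Each of these three blocks of conditions is expected to deliver one of the three systems in the theorem, after which existence and uniqueness follow from linearity plus the positive-definiteness hypothesis.

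First I would compute $\nabla_{\lambda_k}\mathcal{L^E}=0$ for $k=0,\dots,N$. Since the $\lambda$-dependence of \eqref{eqn:supLagDisc} is affine, this step is purely algebraic: for each $k$ one collects the pointwise pairing $-\mu_k$, the coupling $+\M_{k-1,k}\mu_{k-1}$ (obtained after transposing $\mu_{k-1}^{\rm T}\M_{k-1,k}^{\rm T}\lambda_k$ and re-indexing), the parameter-Jacobian contribution $-\mathfrak{M}_{k-1,k}\zeta$ from the optimality constraint, and, only at $k=0$, the boundary contribution $-(\x_0)_\theta\zeta$. Equating the sum to zero yields exactly the forward TLM recursion \eqref{eqn:TLMDisc} together with its initial condition $\mu_0=-(\x_0)_\theta\zeta$. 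Next I would compute $\nabla_{\x_k}\mathcal{L^E}=0$ for $k=N,\dots,0$. Here $\x_k$ enters in three ways: through the primal residual $\x_{k+1}-\Model_{k,k+1}(\x_k,\theta)$ (producing $\M_{k,k+1}^{\rm T}\nu_{k+1}$), through $(r_k)^{\rm T}_{\x_k}$ inside the adjoint residual (producing the second derivative $(r_k)_{\x_k,\x_k}\mu_k$), and through $\M_{k,k+1}^{\rm T}\lambda_{k+1}$ and $\mathfrak{M}_{k,k+1}^{\rm T}\lambda_{k+1}$ inside the adjoint/optimality residuals (producing the mixed partials $(\M_{k,k+1}^{\rm T}\lambda_{k+1})^{\rm T}_{\x_k}\mu_k$ and $(\mathfrak{M}_{k,k+1}^{\rm T}\lambda_{k+1})^{\rm T}_{\x_k}\zeta$). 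The only delicate bookkeeping is keeping signs and transpose placements consistent; once that is done, the terminal index gives $\nu_N=(r_N)_{\x_N,\x_N}\mu_N-(r_N)_{\theta,\x_N}\zeta$ and the interior indices reproduce \eqref{eqn:soaDisc}.

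The main obstacle, as I see it, is the third condition $\nabla_\theta\mathcal{L^E}=0$, which must collapse to the Hessian equation \eqref{eqn:linearSys}. The plan is to differentiate $\mathcal{L^E}$ with respect to $\theta$, substitute the TLM and SOA relations already derived, and recognize the result as the discrete analogue of the identity \eqref{eqn:jtheta}: namely, the action of the reduced Hessian $(\nabla^2_{\theta,\theta}j)(\theta^{\rm a})$ on $\zeta$. The key observation is that $\mu$ and $\nu$ were defined precisely to cancel the first-order sensitivity contributions, so that all state-dependent terms collapse and only the reduced second-order-in-$\theta$ terms survive. Equating this to $\mathcal{E}_\theta$ (which is the only $\theta$-dependence of $\mathcal{L^E}$ that does not pass through the residuals) yields \eqref{eqn:linearSys}. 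I expect that verifying this collapse term-by-term, while conceptually clean, will require the most careful index and transpose tracking in the whole proof.

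Existence and uniqueness then fall out by a straightforward downstream argument. By hypothesis the reduced Hessian $(\nabla^2_{\theta,\theta}j)(\theta^{\rm a})$ is symmetric positive definite, hence invertible, so \eqref{eqn:linearSys} has a unique solution $\zeta\in\mathbbm{R}^m$. With $\zeta$ fixed, \eqref{eqn:TLMDisc} is a deterministic forward linear recursion, so the sequence $\{\mu_k\}_{k=0}^N\subset\mathbbm{R}^n$ is uniquely determined. With $\zeta$ and $\{\mu_k\}$ fixed, \eqref{eqn:soaDisc} is a deterministic backward linear recursion, so $\{\nu_k\}_{k=0}^N\subset\mathbbm{R}^n$ is uniquely determined. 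This both establishes the existence claim invoked at the end of the proof of Theorem \ref{thm:existence} and justifies the ordering (Hessian equation, then TLM, then SOA) that appears in the discrete counterpart of Algorithm \ref{alg:sup}.
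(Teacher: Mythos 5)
Your proposal follows the paper's proof essentially step for step: taking variations of the discrete super-Lagrangian \eqref{eqn:supLagDisc} with respect to $\lambda_k$, $\x_k$, and $\theta$ yields, respectively, the TLM, the SOA, and---after exploiting the identification $\mu_k \equiv -\left(\x_k\right)_{\theta}\zeta$ so that the first-order sensitivity terms cancel and the residue can be matched against the reduced (Lagrangian) Hessian, which is exactly the ``careful index and transpose tracking'' you anticipate---the Hessian equation \eqref{eqn:linearSys}, with existence and uniqueness following from positive definiteness of $(\nabla^2_{\theta,\theta} j)(\theta^{\rm a})$ and the deterministic forward/backward recursions just as you argue. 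One incidental remark: your $\nabla_{\x_k}\mathcal{L^E}=0$ computation correctly produces the interior second-derivative term $\left(r_k\right)_{\x_k,\x_k}\mu_k$, which indeed appears in the SOA derived in the paper's proof (equation \eqref{eqn:SOADiscApp}) but is evidently dropped, apparently by typographical omission, from the recursion \eqref{eqn:soaDisc} as printed in the theorem statement, so your derivation agrees with the proof rather than with the statement on this term.
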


The computational procedure is summarized in the Algorithm \ref{alg:discrete}. A similar approach is discussed in \cite{Alexe:2011} in the context
of error estimation for inverse problems with elliptical PDEs.
\begin{algorithm}
\caption{Calculation of super-Lagrange multipliers} \label{alg:discrete}
\begin{algorithmic}[1]
\Procedure{DiscreteSuperLagrangeMultipliers}{}
\State Solve the Hessian equation \eqref{eqn:linearSys} for $\zeta$;
\State Solve the TLM \eqref{eqn:TLMDisc} forward in time for $\mu_k$, $k=0,\dots,N$;
\State Solve the SOA model \eqref{eqn:soaDisc} backward in time for $\nu_k$,  $k=N,\dots,0$.
\EndProcedure
\end{algorithmic}
\end{algorithm}

%
\begin{comment}[Iterative solution of the Hessian equation]
%
The Hessian equation \eqref{eqn:linearSys} can be solved by iterative methods such as preconditioned conjugate gradients \cite{Cioaca:2012},
which rely on the evaluation of matrix-vector products $v =  (\nabla^2_{\theta,\theta} j)(\theta^{\rm a}) \cdot u$ for any user defined vector $u$.
As explained in \cite{Cioaca:2012}  these products can be computed by first solving a tangent linear model \eqref{eqn:TLMDisc} initialized with $u$, 
and then solving a second order adjoint model \eqref{eqn:soaDisc}, where all linearizations are performed about the optimal solution 
$\{\theta^{\rm a},\x(\theta^{\rm a}),\lambda(\theta^{\rm a})\}$. The matrix-vector product $v$ is obtained from the second 
order adjoint variable at the initial time.
\end{comment}

\begin{comment}[Approximate solution of the Hessian equation]
The numerical solution of \eqref{eqn:ipdiscrete} is usually obtained in a reduced space approach via a gradient-based optimization method.  A reduced gradient $\nabla_\theta\, j(\theta^{(p)})$ is computed at each iteration $p$ of the numerical optimization algorithm. Quasi-Newton approximations of the reduced Hessian inverse $\mathbf{B} \approx (\nabla^2_{\theta,\theta} j)^{-1}$ can be constructed from the sequence of reduced gradients. As proposed in \cite{Alexe:2011}, a convenient way to approximately solve \eqref{eqn:linearSys} is to use the quasi-Newton  matrix: $\zeta \approx \mathbf{B} \cdot \mathcal{E}_{\theta}^T$.
\end{comment}

\begin{proof}
The variation of the super-Lagrangian \eqref{eqn:stationarity} with respect to independent perturbations in
$\theta,\x,\lambda$ is:
\begin{eqnarray*}
\nonumber
\delta \mathcal{L^E} &=& \mathcal{E}_\theta\, \delta\theta - \nu_0^{\rm T} \cdot\left(\delta\x_0 - (\x_0\left(\theta\right))_\theta\, \delta\theta\right) \\
&& - \sum_{k=0}^{N-1} \nu_{k+1}^{\rm T} \cdot \left(\delta\x _{k+1}-\M_{k,k+1}\delta\x_k -\mathfrak{M}_{k,k+1}\delta\theta \right)  \\
&&- \mu_N^{\rm T} \cdot \left(  \delta\lambda_{N} - \left(r_N \right)_{\x_N,\x_N} \delta\x_N - \left(r_N \right)_{\x_N,\theta} \delta\theta\right) \\
&&  -\sum_{k=0}^{N-1} \mu_k^{\rm T} \cdot \left( \delta{\lambda}_k - \M_{k,k+1}^{\rm T}\, \delta\lambda_{k+1} \right) \\
&&  + \sum_{k=0}^{N-1} \mu_k^{\rm T} \cdot \left(  \M_{k,k+1}^{\rm T}\, \lambda_{k+1} + \left(r_k\right)_{\x_k}^T  \right)_{\x_k}\, \delta\x_k 
   \\
&&  +\sum_{k=0}^{N-1} \mu_k^{\rm T} \cdot \left( 
           \M_{k,k+1}^{\rm T}\, \lambda_{k+1} + \left(r_k\right)_{\x_k}^T  \right)_{\theta}\, \delta\theta  \\
 && - \zeta^{\rm T} \cdot \left(\left(\x_{0}\right)^{\rm T}_{\theta}\delta \lambda_0 + \lambda_0^{\rm T} \left(\x_0\right)_{\theta, \theta} \delta \theta
 +\sum_{k=0}^{N} \left(r_k\right)_{\theta, \theta} \delta \theta  + \sum_{k=0}^{N} \left(r_k\right)_{\theta, \x_k} \delta \x_k\right)\, \\
 \nonumber && -  \zeta^{\rm T} \cdot \sum_{k=0}^{N-1}\, \left(\mathfrak{M}^{\rm T}_{k,k+1} \delta \lambda_{k+1} + \left(\mathfrak{M}^{\rm T}_{k,k+1}  \lambda_{k+1}\right)_{\x_k} \delta \x_k + \left(\mathfrak{M}^{\rm T}_{k,k+1}  \lambda_{k+1}\right)_{\theta} \delta \theta \right).
\end{eqnarray*}
The linearization point $\{\theta^{\rm a},\x(\theta^{\rm a}),\lambda(\theta^{\rm a})\}$  satisfies the ideal optimality conditions \eqref{eqn:kktDisc}.

The variation of the super-Lagrangian can be written in terms of dot-products as follows:
\begin{eqnarray*}
 \delta \mathcal{L^E} &=& \delta \mathcal{E} - \sum_{k=0}^N \left \langle \nabla_{\lambda_{k}}\mathcal{L^E},\delta \lambda_{k} \right \rangle 
 -  \sum_{k=0}^N  \left \langle  \nabla_{\x_{k}}\mathcal{L^E},\delta \x_{k} \right \rangle - \left \langle \nabla_{\theta}\mathcal{L^E},\delta \theta \right \rangle,
\end{eqnarray*}
and stationary points are characterized by $\nabla_{\lambda_{k}}\mathcal{L^E }= 0$,  $\nabla_{\x_{k}}\mathcal{L^E} = 0$, and $\nabla_{\theta}\mathcal{L^E} = 0$.

Setting $\nabla_{\lambda_{k}}\mathcal{L^E }= 0$ for $k=0,\dots, N$ leads to the tangent linear model (TLM):
\begin{eqnarray}
\label{eqn:TLMDiscApp}
\mu_0 &=&- \left(\x_{0}\right)_{\theta}\zeta; \\
\nonumber
 \mu_k &=& \M_{k-1,k} \, \mu_{k-1} - \mathfrak{M}_{k-1,k}\, \zeta, \quad k = 1,\dots,N.
\end{eqnarray}
The derivative of the model equation \eqref{eqn:model} with respect to $\theta$ is: 
\begin{eqnarray}
 \label{eqn:dModeldTheta}
\left(\x_{0}\right)_\theta  &=& \left(\x_0\left(\theta\right)\right)_\theta; \\
\nonumber(\x_{k+1})_\theta &=& \M_{k,k+1}\, \left(\x_k\right)_\theta + \mathfrak{M}_{k,k+1}, \quad k=0,\dots, N-1\,.
\end{eqnarray}
Multiplying \eqref{eqn:dModeldTheta} from the right with the vector $\zeta$ gives the variation of the model \eqref{eqn:model} with respect to $\theta$ in the direction $\zeta$:
\begin{eqnarray}
\label{eqn:dModeldTheta2}
 \left(\x_{0}\right)_\theta \zeta &=& \left(\x_0\left(\theta\right)\right)_\theta \zeta; \\
\nonumber
 \left(\x_{k}\right)_\theta\, \zeta &=& \M_{k-1,k} \,\left(\x_{k-1}\right)_{\theta} \, \zeta
 +  \mathfrak{M}_{k-1,k} \, \zeta,\quad k =1,\dots, N; 
\end{eqnarray}
Equations \eqref{eqn:TLMDiscApp} and \eqref{eqn:dModeldTheta2} are identical and consequently we make the identification
\begin{equation}\label{eqn:genXtheta}
\mu_k \equiv - \left(\x_k\right)_{\theta} \zeta, \quad k =0,\dots, N\,.
\end{equation}
Setting $ \nabla_{\x_{k}}\mathcal{L^E} = 0$ for $k=N,\dots,0$ leads to the following second order adjoint (SOA) model:
\begin{eqnarray}
\label{eqn:SOADiscApp}
~~~\nu_N &=& \left(r_N\right)_{\x_N, \x_N} \, \mu_N - \left(r_N\right)_{\theta, \x_N} \, \zeta\,; \\
 \nonumber  
  \nu_k &=& \M_{k,k+1}^{\rm T} \nu_{k+1} + \left(\M_{k,k+1}^{\rm T} \lambda_{k+1}\right)^{\rm T}_{\x_k}\mu_{k} 
  + \left(r_k\right)_{\x_k,\x_k} \, \mu_k 
  - \left(r_k \right)_{\theta, \x_k} \, \zeta\\
 \nonumber &&  - \left(\mathfrak{M}_{k,k+1}^{\rm T} \lambda_{k+1}\right)_{\x_k}^{\rm T} \zeta\,,  \qquad k = N-1,\dots,0.
\end{eqnarray}
%
%
Setting $\nabla_{\theta}\mathcal{L^E} = 0$ gives:
\begin{eqnarray}
\label{eqn:refInterApp1}
0 &=&  \mathcal{E}^T_\theta
  + \left( \x_{0}\right)^T_{\theta} \, \nu_0 + \sum_{k=0}^{N-1}  \mathfrak{M}_{k,k+1}^T\, \nu_{k+1}    \\
\nonumber
  && + \left(r_N\right)_{\x_N,\theta}^T \, \mu_N
  +\sum_{k=0}^{N-1} \left(\M_{k,k+1}^{\rm T}\, \lambda_{k+1} + \left(r_k\right)_{\x_k} \right)_{\theta}^T\, \mu_k   \\
\nonumber
 && - \left(\lambda_0^{\rm T} \left(\x_{0}\right)_{\theta,\theta}\right)^{\rm T}\, \zeta 
  -\sum_{k=0}^{N}  \left(r_k\right)_{\theta,\theta}^{\rm T}\, \zeta
 -\sum_{k=0}^{N-1} \left(  \mathfrak{M}_{k,k+1}^{\rm T} \lambda_{k+1} \right)_{\theta}^T  \, \zeta.
\end{eqnarray}
The transposed equation \eqref{eqn:dModeldTheta} times the multiplier $\nu$ gives:
\begin{eqnarray*}
\left(\x_{0}\right)_\theta^T\,\nu_{0}  &=& \left(\x_0\left(\theta\right)\right)_\theta^T\,\nu_{0}; \\
 (\x_{k+1})_\theta^T\, \nu_{k+1} &=& \left(\x_k\right)_\theta^T\,  \M_{k,k+1}^T\,\nu_{k+1}  + \mathfrak{M}_{k,k+1}^T\,\nu_{k+1} , \quad k=0,\dots, N-1\,,
\end{eqnarray*}
and using the SOA model \eqref{eqn:SOADiscApp}
\begin{eqnarray*}
 (\x_{k+1})_\theta^T\, \nu_{k+1} &=&   (\x_{k})_\theta^T\, \nu_{k}  
 - (\x_{k})_\theta^T\,\left(\M_{k,k+1}^{\rm T} \lambda_{k+1}\right)^{\rm T}_{\x_k}\mu_{k} 
 + (\x_{k})_\theta^T\,\left(r_k \right)_{\theta, \x_k} \, \zeta \\
 &&  - (\x_{k})_\theta^T\, \left(r_k\right)_{\x_k,\x_k} \, \mu_k + (\x_{k})_\theta^T\,\left(\mathfrak{M}_{k,k+1}^{\rm T} \lambda_{k+1}\right)_{\x_k}^{\rm T} \zeta
 +  \mathfrak{M}_{k,k+1}^T\,\nu_{k+1}, \\
&&  \quad k=0,\dots, N-1\,.
\end{eqnarray*}
Summing up this equation for times $k=0,\dots,N-1$ leads to
\begin{eqnarray*}
 (\x_{N})_\theta^T\, \nu_{N} &=&   (\x_{0})_\theta^T\, \nu_{0}  
 - \sum_{k=0}^{N-1} (\x_{k})_\theta^T\,\left(\M_{k,k+1}^{\rm T} \lambda_{k+1}\right)^{\rm T}_{\x_k}\mu_{k} 
 + \sum_{k=0}^{N-1}(\x_{k})_\theta^T\,\left(r_k \right)_{\theta, \x_k} \, \zeta \\
 &&  -  \sum_{k=0}^{N-1}(\x_{k})_\theta^T\, \left(r_k\right)_{\x_k,\x_k} \, \mu_k + \sum_{k=0}^{N-1}(\x_{k})_\theta^T\,\left(\mathfrak{M}_{k,k+1}^{\rm T} \lambda_{k+1}\right)_{\x_k}^{\rm T} \zeta
 +  \sum_{k=0}^{N-1}\mathfrak{M}_{k,k+1}^T\,\nu_{k+1}\,,
\end{eqnarray*}
and after inserting the final condition for $\nu_N$:
\begin{eqnarray}
 (\x_{0})_\theta^T\, \nu_{0}  +  \sum_{k=0}^{N-1}\mathfrak{M}_{k,k+1}^T\,\nu_{k+1}
 &=&  
 \sum_{k=0}^{N-1} (\x_{k})_\theta^T\,\left(\M_{k,k+1}^{\rm T} \lambda_{k+1}\right)^{\rm T}_{\x_k}\mu_{k} 
 \nonumber \\ \label{eqn:nuCthetainEtheta1}
&&+\sum_{k=0}^{N}(\x_{k})_\theta^T\, \left(r_k\right)_{\x_k,\x_k} \, \mu_k - \sum_{k=0}^{N}(\x_{k})_\theta^T\,\left(r_k \right)_{\theta, \x_k} \, \zeta   \\
\nonumber && - \sum_{k=0}^{N-1}(\x_{k})_\theta^T\,\left(\mathfrak{M}_{k,k+1}^{\rm T} \lambda_{k+1}\right)_{\x_k}^{\rm T} \zeta.
\end{eqnarray}
Substituting equations \eqref{eqn:nuCthetainEtheta1} and \eqref{eqn:genXtheta} in \eqref{eqn:refInterApp1} we obtain the following 
expression of the equation $\nabla_{\theta}\mathcal{L^E} = 0$:
\begin{eqnarray}
\label{eqn:mod-hessian-eqn}
0 &=&  \mathcal{E}^T_\theta
   -\sum_{k=0}^{N}\left(\x_k\right)^{\rm T}_{\theta} \left(\left(r_k\right)^{\rm T}_{\x_k,\x_k}\right)\left(\x_{k}\right)_{\theta} \zeta\\
 \nonumber&& -\sum_{k=0}^{N-1}\left(\left(\x_{k}\right)^{\rm T}_{\theta}\left(\M_{k,k+1}^{\rm T} \lambda_{k+1}\right)_{\x_k}\left(\x_{k}\right)_{\theta} \zeta\right) \\
 \nonumber && - \sum_{k=0}^{N-1}\left(\x_{k}\right)^{\rm T}_{\theta}\left(\left( r_k\right)_{\theta, \x_k}^{\rm T} +   \left(\mathfrak{M}_{k,k+1}^{\rm T} \lambda_{k+1}\right)_{\x_k}\right)\zeta - \left(\x_N\right)_{\theta}^{\rm T}\left(r_N\right)_{\theta, \x_k}^{\rm T} \, \zeta\\
\nonumber
  && - \left(r_N\right)_{\x_N,\theta}^T \, \left(\x_N\right)_\theta \zeta
  - \sum_{k=0}^{N-1} \left(\M_{k,k+1}^{\rm T}\, \lambda_{k+1} + \left(r_k\right)_{\x_k} \right)_{\theta}^T\, \left(\x_k\right)_{\theta} \zeta   \\
\nonumber
 && - {\left(\lambda_0^{\rm T} \left(\x_{0}\right)_{\theta,\theta}\right)^{\rm T}}\, \zeta 
  -\sum_{k=0}^{N}  \left(r_k\right)_{\theta,\theta}^{\rm T}\, \zeta
 -\left( \sum_{k=0}^{N-1}  \left(\mathfrak{M}_{k,k+1}^{\rm T} \lambda_{k+1} \right)_{\theta}^T \right) \cdot \zeta.
\end{eqnarray}

\paragraph{Hessian of the reduced function}
%
%
Consider the reduced Lagrangian \eqref{eqn:LagDisc}
\begin{equation}
 \label{eqn:LagDiscApp-reduced}
 \ell(\theta) = j(\theta)
 -  \sum_{k=0}^{N-1}\, \lambda^{\rm T}_{k+1}  \cdot \left(\x _{k+1}(\theta) - \Model_{k,k+1}(\x_k(\theta), \theta)\right) - \lambda_0^{\rm T} \cdot \left(\x_0 - \x_0 \left(\theta\right)\right).
\end{equation}
Since there are only equality constraints  the reduced Lagrangian \eqref{eqn:LagDiscApp-reduced}, its gradient, and its Hessian evaluated at a solution are identically equal to the reduced cost function \eqref{eqn:cfd-reduced}, its reduced gradient, and its reduced Hessian, respectively:
\begin{equation}
\label{eqn:same-l-j}
\ell(\theta) \equiv j(\theta), \quad
\nabla_\theta \ell(\theta) \equiv \nabla_\theta j(\theta), \quad
\nabla^2_{\theta,\theta} \ell(\theta) \equiv \nabla^2_{\theta,\theta} j(\theta).
\end{equation}
The gradient of the reduced Lagrangian \eqref{eqn:LagDiscApp-reduced} with respect to $\theta$ is
\begin{eqnarray}
\label{eqn:LagGrad-reduced}
(\nabla_\theta \ell)^T &=& \sum_{k=0}^{N} \, \left(r_k\right)_{\theta} +  \sum_{k=0}^{N} \, \left(r_k\right)_{\x_k} \left( \x_k \right)_{\theta} \\
\nonumber
&&- \sum_{k=0}^{N-1} \lambda_{k+1}^{\rm T} \bigl(\left(\x_{k+1}\right)_{\theta} - \mathfrak{M}_{k,k+1} -\M _{k,k+1}\, \left(\x_k\right)_{\theta}\bigr) \\
\nonumber
&&  -  \sum_{k=0}^{N-1}\, (\lambda_{k+1}^{\rm T})_\theta  \cdot \left(\x _{k+1}(\theta) - \Model_{k,k+1}(\x_k(\theta), \theta)\right) \\
\nonumber
&& - \lambda_0^{\rm T} \cdot \left((\x_0)_\theta - (\x_0 \left(\theta\right))_\theta\right)
- (\lambda_0^{\rm T})_\theta \cdot \left(\x_0 - \x_0 \left(\theta\right)\right).
\end{eqnarray}
Taking the variation of \eqref{eqn:LagGrad-reduced}  with respect to $\theta$ in the direction $\delta\theta=\zeta$ 
and evaluating all terms at at the optimal point $\{\theta^{\rm a},\x(\theta^{\rm a}),\lambda(\theta^{\rm a})\}$ gives:
\begin{eqnarray}
\nonumber
\left(\nabla^2_{\theta,\theta} \ell\right)\, \zeta &=& \sum_{k=0}^{N} \, \left(\left(r_k\right)_{\theta,\theta}  +   \left(\x_k\right)_{\theta}^{\rm T}\left(r_k\right)_{\x_k, \x_k} \left( \x_k \right)_{\theta}
 + \left( \x_k \right)_{\theta}^{\rm T} \, \left(r_k\right)_{\x_k,\theta}\,\right) \zeta \\
\label{eqn:LagHess-reduced-new2}
&&+ \sum_{k=0}^{N} \left(r_k\right)_{\theta, \x_k} \left(\x_k\right)_{\theta}\, \zeta 
    + \sum_{k=0}^{N-1} \bigl( \lambda_{k+1}^{\rm T} \,  \mathfrak{M}_{k,k+1} \bigr)_\theta\, \zeta \\
\nonumber
&&+ \sum_{k=0}^{N-1} \left(\left(\x_k\right)_{\theta}^{\rm T}\bigl(\lambda_{k+1}^{\rm T} \, \M _{k,k+1}\, \bigr)_\theta +   \bigl( \lambda_{k+1}^{\rm T} \,  \mathfrak{M}_{k,k+1} \bigr)_{\x_k} \left(\x_k\right)_{\theta}\right)\, \zeta\\
\nonumber
&& + \lambda_0^{\rm T} \cdot \left( \x_0 \left(\theta\right)_{\theta,\theta}\right)\,\zeta  + \sum_{k=0}^{N-1}\left(\x_k\right)_{\theta}^{\rm T}\bigl(\lambda_{k+1}^{\rm T} \, \M _{k,k+1}\, \bigr)_{\x_k} \left(\x_k\right)_{\theta} \, \zeta. 
\end{eqnarray}

Substituting equations \eqref{eqn:LagHess-reduced-new2} and \eqref{eqn:same-l-j} into \eqref{eqn:mod-hessian-eqn}
leads to the following simpler form of the equation $\nabla_{\theta}\mathcal{L^E} = 0$:
\begin{equation}
\mathcal{E}^{\rm T}_{\theta} = \left(\nabla^2_{\theta,\theta} \ell\right)^T \cdot \zeta = \left(\nabla^2_{\theta,\theta} \ell\right) \cdot \zeta = \left(\nabla^2_{\theta,\theta} j\right) \cdot \zeta\,.
\end{equation}

\end{proof}

\begin{comment}[Relation to the error covariance matrix of the optimal solution]
The paper \cite{Gejadze:2008} describes an algorithm for the evaluation of the error covariance matrix associated with the optimal solution $\theta^{\rm a}$ when there are errors in the data. There is a direct relationship between the above a posteriori error estimate and \cite{Gejadze:2008}. In this work we can recover the error covariance matrix column by column by successively solving the system in \eqref{eqn:linearSys} for several error functionals. Specifically, if we take $\mathcal{E}$ to be one solution component \eqref{eqn:error_functional}, $\mathcal{E}_{\theta}$ becomes the canonical basis vector $\mathbbm{e}_{\rm k}$. Application of Algorithm \ref{alg:discrete} then recovers the $k^{th}$ column of the a posteriori error covariance matrix by solving the linear system \eqref{eqn:linearSys}. 
\end{comment}

\section{Application to data assimilation problems}\label{sec:da}
Next we apply this methodology to a specific discrete-time inverse problem, namely, four dimensional variational (4D-Var) data assimilation.
Data assimilation is the fusion of information from imperfect model predictions and noisy data available at discrete times, to obtain a consistent description of the state of a physical system \cite{daley1993, kalnay2003}. For a detailed description of the sources of information, sources of error, description of four dimensional variational assimilation problems (4D-Var), approaches to solve the 4D-Var problems and a detailed derivation of a posteriori error estimation for 4D-Var problems, please see \cite{Rao:2014}. 

\subsection{The ideal 4D-Var problem}

We consider the particular case of strongly constrained 4D-Var data assimilation \cite{kalnay2003} where 
the parameters are the initial conditions $\theta := \x_0$ and the cost function \eqref{eqn:4D-Var} is
 \begin{eqnarray}
 \label{eqn:4D-Var-fun}
 {\J}(\x_0) &=&
 \frac{1}{2}\, \left(\x_0-\x^{\rm b}_0\right)^{\rm T} \, \B_0^{-1}\, \left(\x_0-\x^{\rm b}_0\right)
 \\
\nonumber
 && +
\frac{1}{2}\,  \sum_{k=0}^N\; \left(\mathcal{H}_k (\x_k) - \y_k\right)^{\rm T}
\R_k^{-1}\left(\mathcal{H}_k (\x_k) - \y_k\right)\,,
 \end{eqnarray}

The inference problem is formulated as follows:
\begin{equation}\label{eqn:4dvar-opt}
 \x^{\rm a}_0 = \underset{\x_0 \in \mathbbm{R}^n} {\text{arg\,min}}~ \J\left(\x_0\right) \quad \text{subject to \eqref{eqn:model}\,.}
\end{equation}
The first order optimality conditions for the problem \eqref{eqn:4dvar-opt} read:
\begin{subequations}\label{eqn:disckkt}
\begin{eqnarray}
\label{kkt:fwdFDVAR}
\textnormal{forward model:}~~ & 0 = \x _{k+1}-\Model_{k,k+1}(\x_k) , \quad k=0,1,\dots , N-1\,;
\end{eqnarray}
\begin{eqnarray}
\label{kkt:adjFDVAR}
\textnormal{adjoint model:} \quad  \lambda_{N} &=& \mathbf{H}_N^{\rm T}\R_N^{-1}\left(\mathcal{H}_N(\x _N)-\y_N\right) \,,  \\
\nonumber
{\lambda}_k &=&  \M_{k,k+1}^{\rm T}\, \lambda_{k+1} +
\mathbf{H}_k^{\rm T}\R_k^{-1}\left(\mathcal{H}_k(\x _k)-\y_k\right)\,, \\
\nonumber && \quad k=N-1, \dots , 0\,;
\end{eqnarray}
\begin{eqnarray}
\label{kkt:optFDVAR}
\textnormal{optimality:} ~~&0 =  \B_0^{-1}(\x _0-\x^{\rm b}_0)+ \lambda_0 \,.
\end{eqnarray}
\end{subequations}
Here ${\lambda}_k \in \mathbbm{R}^n$ are the adjoint variables, and
\begin{eqnarray*}
\mathbf{H}_k &:=& \left(\mathcal{H}_k\right)_{\x _k}(\x _k)\,,
\end{eqnarray*}
is the state-dependent Jacobian matrix of the observation operator.

%
\subsection{The perturbed 4D-Var problem}
%
In this section we use the imperfect data, imperfect model and hence solve a perturbed 4D-Var problem. The evolution of the discrete
state vector $\x \in \mathbbm{R}^n$ is represented by the imperfect discrete model \eqref{eqn:model-p}.
In the presence of data errors $\Delta\y_k$ the discrete cost function reads \cite{kalnay2003}:
 \begin{eqnarray}
 \label{eqn:4D-Var:imperfect}
 \widehat{\J}(\x_0) &=&
 \frac{1}{2}\, \left(\x_0-\x^{\rm b}_0\right)^{\rm T} \, \B_0^{-1}\, (\x_0-\x^{\rm b}_0)
 \\
\nonumber
 && +
 \frac{1}{2}\sum_{k=0}^N \left(\mathcal{H}_k (\widehat{\x}_k) - \y_k- {\Delta\y_k} \right)^{\rm T}
\R_k^{-1}\left(\mathcal{H}_k (\widehat{\x}_k) - \y_k - {\Delta\y_k} \right).
 \end{eqnarray}
The perturbation in each of the cost function terms is
\[
\Delta \widehat{r}_k =
\left(\y_k - \mathcal{H}_k (\widehat{\x}_k) \right)^{\rm T} \, \R_k^{-1}\, \Delta\y_k
+ 
 \frac{1}{2} {\Delta\y_k}^{\rm T} \, \R_k^{-1}\, {\Delta\y_k}.
\]
The perturbed strongly constrained 4D-Var analysis problem solved in reality is
\begin{equation}
\label{eqn:4dvar-opt-p}
 \begin{aligned}
 \widehat{\x}^{\rm a}_0 = &~\underset{\x_0 \in \mathbbm{R}^n} {\text{arg\, min}}\,
 &&\widehat{\J}\left(\x_0\right) \quad \text{subject to} ~~\text{\eqref{eqn:model-p}\,.} 
 \end{aligned}
\end{equation}

\subsection{Super-Lagrangian for the 4D-Var problem} \label{sec:supLagDiscFdvar}
We follow the same procedure as in Section \ref{sec:supLagDisc} to construct the super-Lagrangian 
\eqref{eqn:supLagDisc}  associated with the \qoi functional of the form  \eqref{eqn:error_functional}  and with the first order discrete optimality conditions \eqref{eqn:disckkt} as  constraints.
The super-Lagrange multipliers for a stationary point of $\mathcal{L^E}$ are computed using Algorithm \ref{alg:discrete}. Equations \eqref{eqn:supLagParamsDisc} take the following particular form for the 4D-Var system: 
\begin{subequations}
\label{eqn:supLagParamsDiscFdvar}
\begin{align}
\label{eqn:linSysDiscFdvar}
\textnormal{Linear system:}~~ &  
\left(\nabla_{\x _0, \x _0}^2\, j\right)\, \cdot \zeta =  \nabla_{\x_0} \mathcal{E}  \,; \\
\label{eqn:tlmDiscFdvar}
\textnormal{TLM:}~~&\mu_0 = -\zeta\,; \quad \mu_{k+1} = \M_{k,k+1}\,\,\mu_{k},\quad k=0,\dots,N-1\,; \\
\label{eqn:soaDiscFdvar}
\textnormal{SOA:}
~~ &  \nu_{N} = \mathbf{H}_N^{\rm T}\R_N^{-1}\mathbf{H}_N \, \mu_N\,,\\
\nonumber
& \nu_{k} = \M_{k,k+1}^T\,\nu_{k+1}  + (\M_{k,k+1}^{\rm T}\, \lambda_{k+1})_{\x_k}^T\, \mu_k \\
\nonumber&\qquad + \mathbf{H}_k^{\rm T}\R_k^{-1}\mathbf{H}_k\, \mu_k\,, \quad  k=N-1,\dots,0.
\end{align}
\end{subequations}
\subsection{The 4D-Var a posteriori error estimate}

We apply the a posteriori error estimate \eqref{eqn:E-error-estimate} to the 4D-Var solution. The total error \eqref{eqn:deFinal} is the sum of the contributions of forward model errors
\begin{subequations}
\label{eqn:discimpactFdvar}
\begin{eqnarray}
\label{eqn:discimpactFdvar_fwd}
  \Delta\mathcal{E}_{\rm fwd} & = & \displaystyle\sum_{k=1}^{N}\,\nu_{k}^{\rm T} \cdot \Delta \widehat{\x}_{k} \,,
\end{eqnarray}
the contributions of the adjoint model errors
 \begin{eqnarray}
 \label{eqn:discimpactFdvar_adj}
  \Delta\mathcal{E}_{\rm adj} & = & -\sum_{k=0}^{N}\mu_k^{\rm T} \cdot  \left( {\mathbf{H}_k^{\rm T}\R_k^{-1} \Delta\y_k}  \right) +  \sum_{k=0}^{N-1}\mu_k^{\rm T} \cdot \left(\Delta \widehat{\x}_{k+1}\right)_{{\x}_k}^{\rm T} \widehat{\lambda}_{k+1}  \,,
 \end{eqnarray}
and the contribution of the error in the optimality equation
\begin{eqnarray}
\label{eqn:discimpactFdvar_opt}
 \Delta \mathcal{E}_{\rm opt} & = & - \zeta^{\rm T} \, \left(\Delta \widehat{\x}_{1}\right)^{\rm T}_{\x_0}\,\widehat{\lambda}_{1}.
\end{eqnarray}
\end{subequations}

\subsection{Probabilistic interpretation}

Consider the case where the model errors are given by a state-dependent bias plus state-independent noise: 
\[
\Delta \widehat{\x}_{k} = \beta_k + \eta_k; \quad \texttt{E}[\eta_k] = 0; \quad \texttt{cov}[\eta_k,\eta_\ell] = \mathbf{Q}_{k,\ell}.
\]
Similarly, assume that the data errors are composed of bias and noise (both state-independent) and that data noise at different times is uncorrelated: 
\begin{equation}
\label{eq:data-error-statistics}
\Delta \widehat{\y}_{k} = \rho_k + \varepsilon_k; \quad \texttt{E}[\varepsilon_k] = 0; \quad \texttt{cov}[\varepsilon_k,\varepsilon_k] = \mathbf{R}_k; 
 \quad \texttt{cov}[\varepsilon_k,\varepsilon_\ell] = \mathbf{0}, ~ k \ne \ell.
\end{equation}
Assume in addition that the model and the data noises are uncorrelated.

Consider the super-multipliers evaluated at a given forward and adjoint trajectory, e.g., at the optimum. The super-multiplier values do not depend on the noise in the 
model and in the data. From equations \eqref{eqn:discimpactFdvar} the error estimate reads
\begin{subequations}
\label{eqn:discimpactFdvar-noise}
\begin{eqnarray}
\label{eqn:discimpactFdvar-estimate-noise}
~~  \Delta\mathcal{E}^{\rm est} & = & \displaystyle\sum_{k=1}^{N}\,\nu_{k}^{\rm T} \cdot \beta_{k} + \displaystyle\sum_{k=1}^{N}\,\nu_{k}^{\rm T} \cdot \eta_{k}  
  -\sum_{k=0}^{N}\mu_k^{\rm T} \cdot  \left( {\mathbf{H}_k^{\rm T}\R_k^{-1} \rho_k}  \right) \,,\\
 \nonumber
  &  & -\sum_{k=0}^{N}\mu_k^{\rm T} \cdot  \left( {\mathbf{H}_k^{\rm T}\R_k^{-1} \varepsilon_k}  \right) +  \sum_{k=0}^{N-1}\mu_k^{\rm T} \cdot \left(\beta_{k+1}\right)_{{\x}_k}^{\rm T} \widehat{\lambda}_{k+1}   - \zeta^{\rm T} \, \left(\beta_{1}\right)^{\rm T}_{\x_0}\,\widehat{\lambda}_{1}.
\end{eqnarray}
The mean of the estimated \qoi error is
\begin{eqnarray}
\label{eqn:discimpactFdvar-noise-mean}
 \texttt{E}[\Delta\mathcal{E}^{\rm est}] & = & \displaystyle\sum_{k=1}^{N}\,\nu_{k}^{\rm T} \cdot \beta_{k} 
  -\sum_{k=0}^{N}\mu_k^{\rm T} \cdot  \left( {\mathbf{H}_k^{\rm T}\R_k^{-1} \rho_k}  \right) \\
\nonumber
&& + \sum_{k=0}^{N-1}\mu_k^{\rm T} \cdot \left(\beta_{k+1}\right)_{{\x}_k}^{\rm T} \widehat{\lambda}_{k+1} 
 - \zeta^{\rm T} \, \left(\beta_{1}\right)^{\rm T}_{\x_0}\,\widehat{\lambda}_{1},
\end{eqnarray}
and the last two terms disappear when the model bias is state-independent.
The variance of the estimated \qoi error contributions is:
\begin{eqnarray}
\label{eqn:discimpactFdvar-noise-var}
  \texttt{var}[\Delta\mathcal{E}^{\rm est}] & = & \ \displaystyle\sum_{k,\ell=1}^{N}\,\nu_{k}^{\rm T} \, \mathbf{Q}_{k,\ell}\, \nu_{\ell} 
   +  \sum_{k=0}^{N}\mu_k^{\rm T} \, \left( \mathbf{H}_k^{\rm T}\, \R_k^{-1} \, \mathbf{H}_k \right)\, \mu_k .
\end{eqnarray}
\end{subequations}
More details can be found in \cite[Appendix C]{Rao:2014_Supp}.

\section{Numerical Experiments} \label{sec:numexp}
We now apply the continuous and discrete a posteriori error estimation methodologies to two test problems, the heat equation and the shallow water model on a sphere. 
The a posteriori error estimates for the heat equation is performed using the continuous model procedure, whereas for the shallow water model we calculate the estimates using a discrete model. 
\subsection{Heat equation}
The one dimensional heat equation is given by \cite{Hundsdorfer:2003}:
\begin{equation}\label{eqn:he}
 \frac{\partial \mathbf{u}}{\partial t} = \alpha^2 \frac{\partial^2\mathbf{u}}{\partial \x^2} ,\quad \x \in [-1,1]\, ,
 \quad t \in [0,0.1]\,,
\end{equation}
with the following initial and boundary conditions: 
\begin{eqnarray}\label{eqn:periodicpde}
\begin{cases}
  \mathbf{u}\left(0,\x\right) = u_0\left(\x\right), \\ 
 \mathbf{u}\left(t,-1\right) = \mathbf{u}\left(t,1\right) , \\
\displaystyle \frac{\partial \mathbf{u}}{\partial \x} \left(t,-1\right) = \frac{\partial \mathbf{u}}{\partial \x} \left(t,1\right)\,. 
 \end{cases}
\end{eqnarray}
%
%

We discretize the PDE \eqref{eqn:periodicpde} in space using a central difference scheme to obtain an ODE of the form \eqref{eqn:ode}, which is our forward model. The evolution of temperature with time is shown in Figure \ref{fig:forward}.  Synthetic observations are obtained by integrating the forward model in time, using a reference initial condition, and perturbing the solution at various times with noise, whose mean is 0 and standard deviation is 10\% of the actual solution.
Synthetic model errors are introduced by adding a constant vector to the actual model; the imperfect model has the form \eqref{eqn:odeerr} with
$\Delta f(t) = 1$. 

We solve the inverse problem \eqref{eqn:ip} to obtain $\x_0^{\rm a}$ which minimizes the cost function $\eqref{eqn:cf}$. The solution of the inverse problem \eqref{eqn:ip} requires solving a constrained optimization problem. The optimization is performed using Poblano, a Matlab package for gradient based optimization \cite{SAND2010-1422}. The necessary gradients are computed using FATODE, a package for time integration and sensitivity analysis for ODEs \cite{Hong:2011}. 

The \qoi, i.e., the error functional, is the mean value of the optimal initial condition
\begin{equation}\label{eqn:errfunctional}
 \mathcal{E}\left(\x_0^{\rm a}\right) = \frac{1}{n} \displaystyle \sum \limits_{i=1}^{n} \left( \x_{0}^{\rm a}\right)_i \,.
\end{equation}
We denote the solution of the perturbed inverse problem \eqref{eqn:4dvar-opt-p} by $\widehat{\x}_0^{\rm a}$. The actual error in the mean of the solution \eqref{eqn:errfunc} is given by:
\begin{equation}
\label{eq:eactual}
\Delta \mathcal{E}_{\rm actual} =  \mathcal{E}\left(\widehat{\x}_0^{\rm a}\right) -\mathcal{E}\left(\x_0^{\rm a}\right)  = \frac{1}{n} \displaystyle \sum \limits_{i=1}^{n}\left(  \left( \widehat{\x}_{0}^{\rm a}\right)_i - \bigl( \x_{0}^{\rm a}\right)_i \bigr)\,.
\end{equation}
We follow the procedure outlined in Algorithm \ref{alg:sup} and Section \ref{sec:errest} to estimate the impact of the data and model 
errors on the mean of the optimal solution \eqref{eqn:errfunctional}. 
Solutions of the tangent linear, first order adjoint, and the second order adjoint models are shown in Figures \ref{fig:tlmsol}, \ref{fig:adjoint}, and \ref{fig:soasol} respectively. Table \ref{tab:error:he} compares the actual error(equation \eqref{eq:eactual}) in the \qoi and an estimate of \eqref{eq:eactual} $\left(\Delta \mathcal{E}_{\rm est}\right)$. We observe that the estimates are within acceptable bounds, when compared to the actual values.
Figure \ref{fig:dataerrors} shows the errors in the individual observations for the 1D heat equation; they are randomly distributed. Figure \ref{fig:datacontr} shows the contributions of different observation errors to the error in the quantity of interest \eqref{eqn:errfunctional}. We observe that certain grid points contribute to the error more than others. Since the physical process is diffusive, measurements errors occurring earlier in time contribute more to the a posteriori error estimate. The data error contributions indicate the sensitive areas, where measurements need to be very accurate. Gross inconsistencies in the data error contribution may also point towards faulty sensors. Figure \ref{fig:modelcontr} shows the contributions of model errors at different grid points to the error in the quantity of interest \eqref{eqn:errfunctional}.   We observe that the contributions of model errors follows the profile of the second order adjoint model evolution shown in Figure \ref{fig:soasol}. This is in 
agreement with the theory in Section \ref{sec:probdef}. Some grid points tend to be more sensitive than the others to the errors in the model. This indicates the need for better physical representation, e.g., obtained by increasing grid resolution in the sensitive regions.
\begin{table}
 \centering
 \begin{tabular}{|l|l|l|}
  \hline
  &$\Delta\mathcal{E}_{\rm actual}$&$\Delta\mathcal{E}_{\rm est}$ \\ \hline
  Data Errors & 1.945$\times 10^{-2}$& 2.395$\times 10^{-2}$ \\ \hline
  Model Errors& 2.561$\times 10^{-2}$ & 1.819$\times 10^{-2}$ \\ \hline
 \end{tabular}
 \caption{The comparison between actual error and the a posteriori error estimates for the heat equation.}
 \label{tab:error:he}
\end{table}
\begin{figure}[ht]
  \centering
  \subfigure[Forward model\label{fig:forward}]{\includegraphics[scale=0.30]{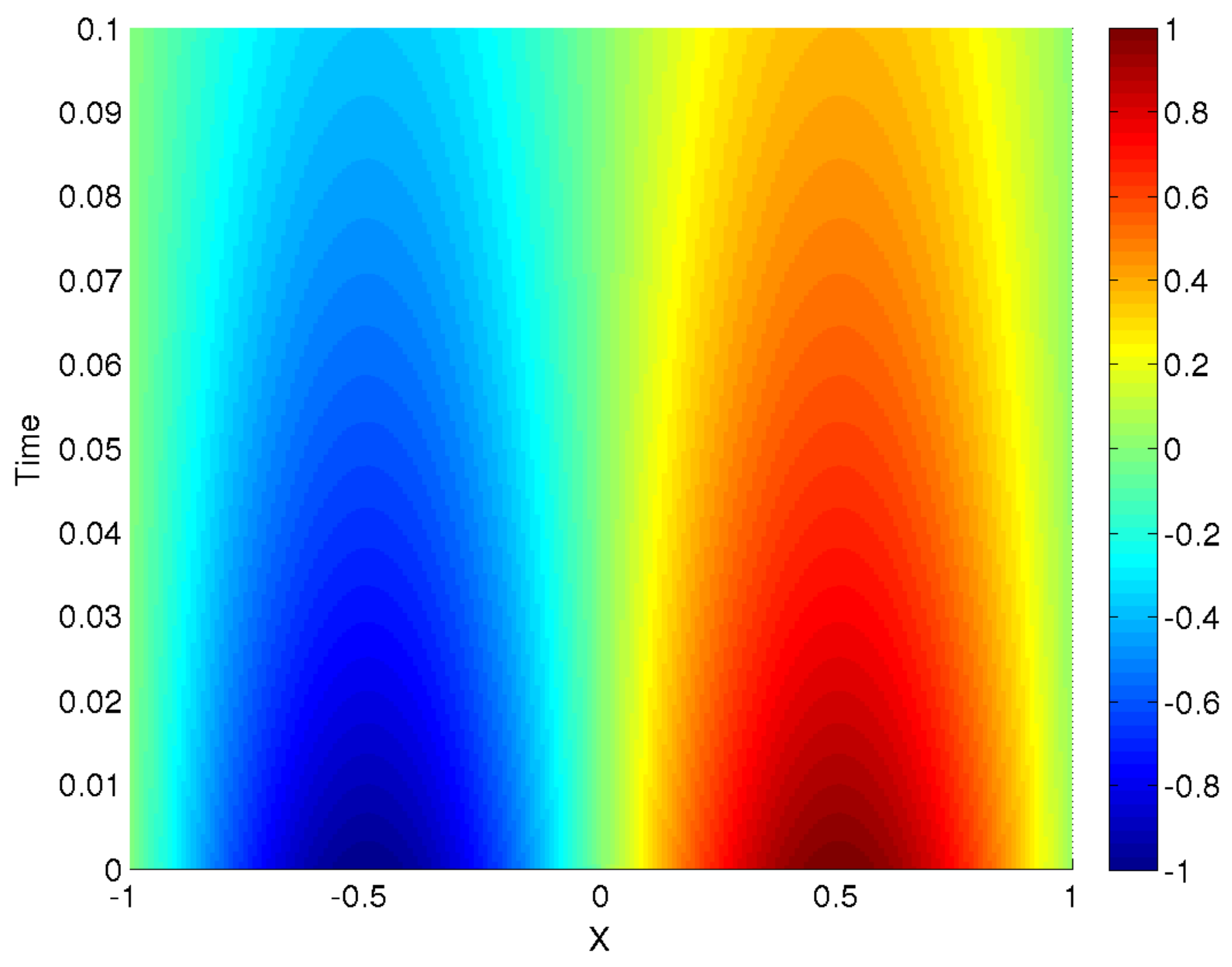}}
  \subfigure[Tangent linear solution\label{fig:tlmsol}]{\includegraphics[scale=0.30]{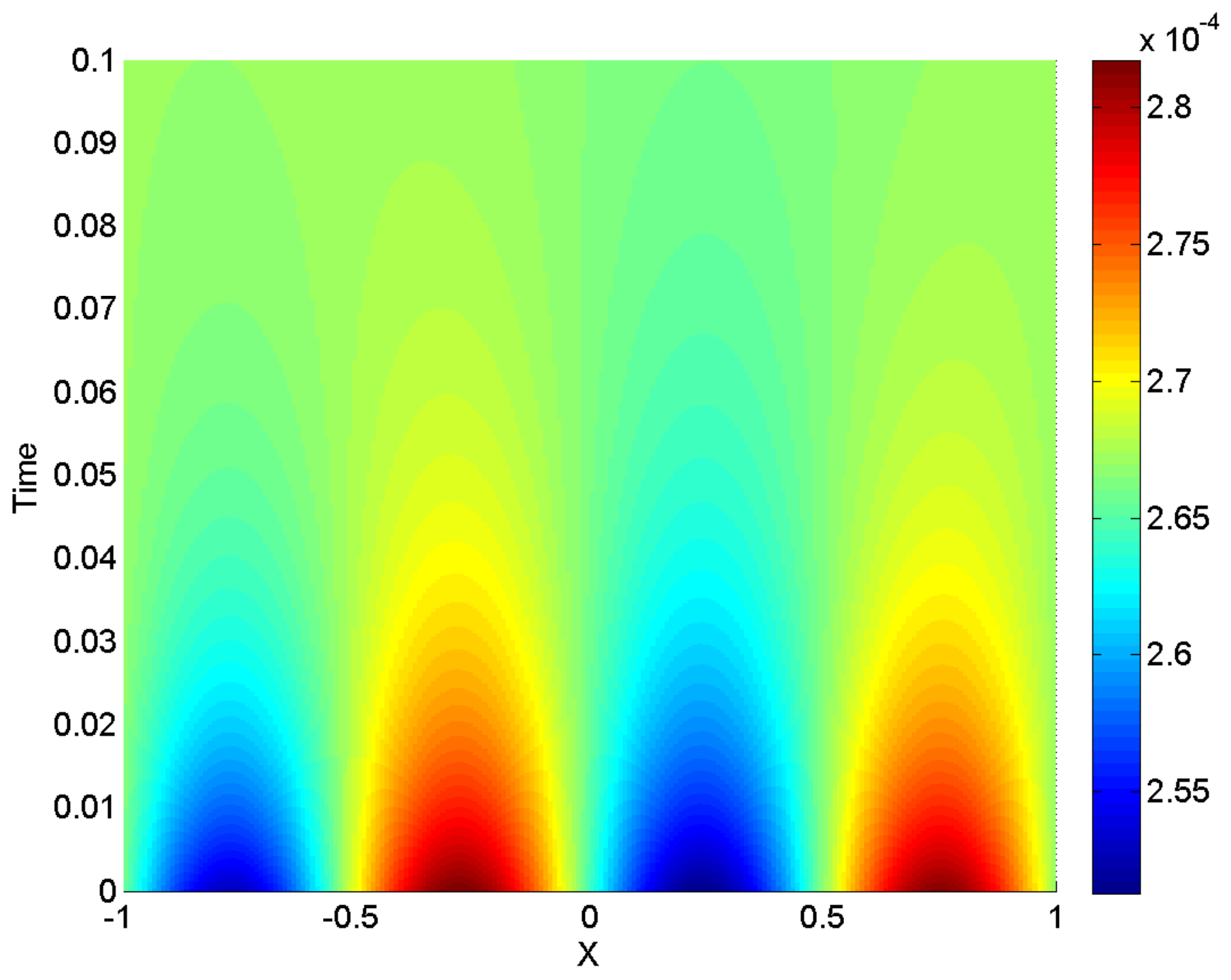}}
  \subfigure[Adjoint solution\label{fig:adjoint}]{\includegraphics[scale=0.30]{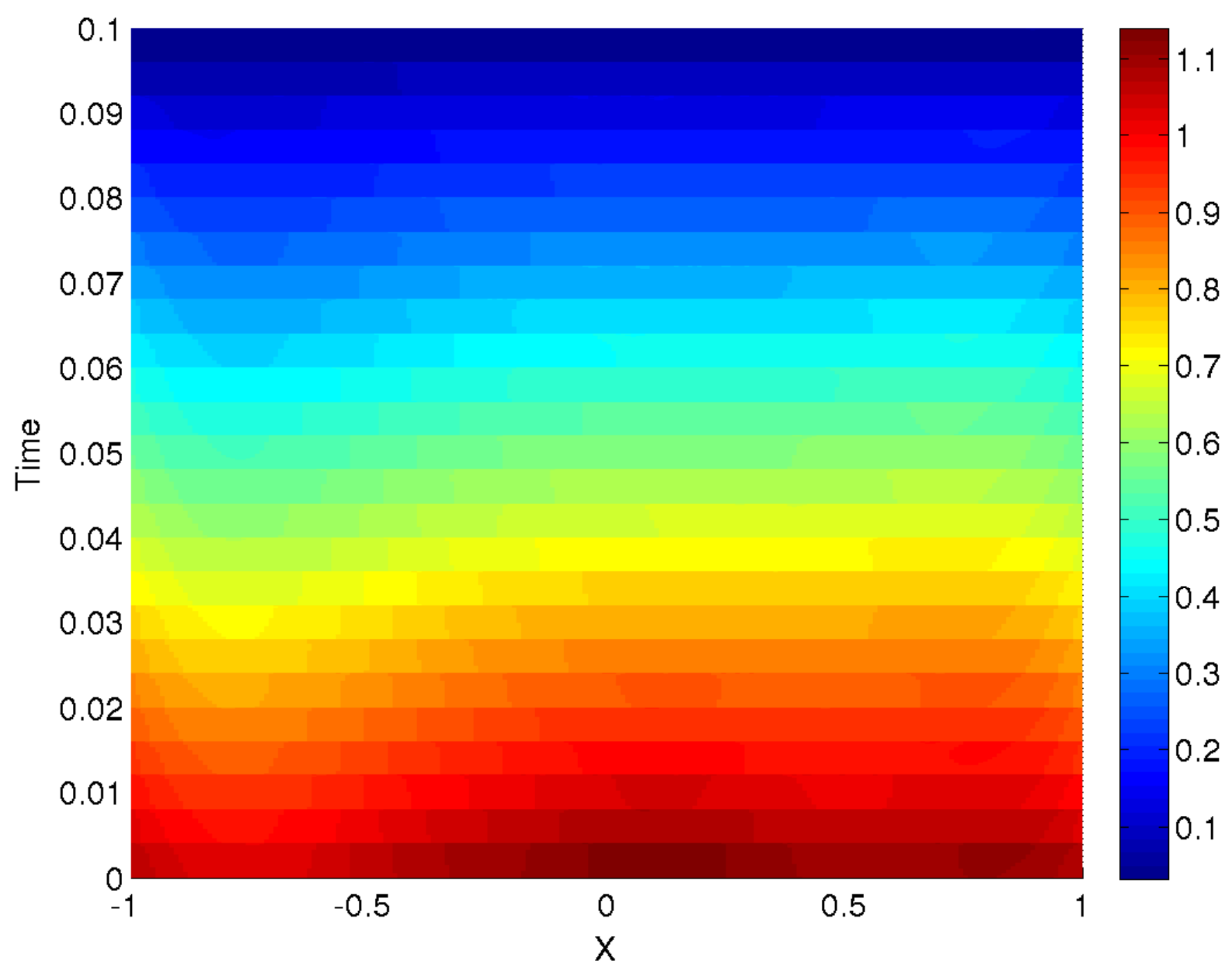}}
  \subfigure[Second order adjoint solution\label{fig:soasol}]{\includegraphics[scale=0.30]{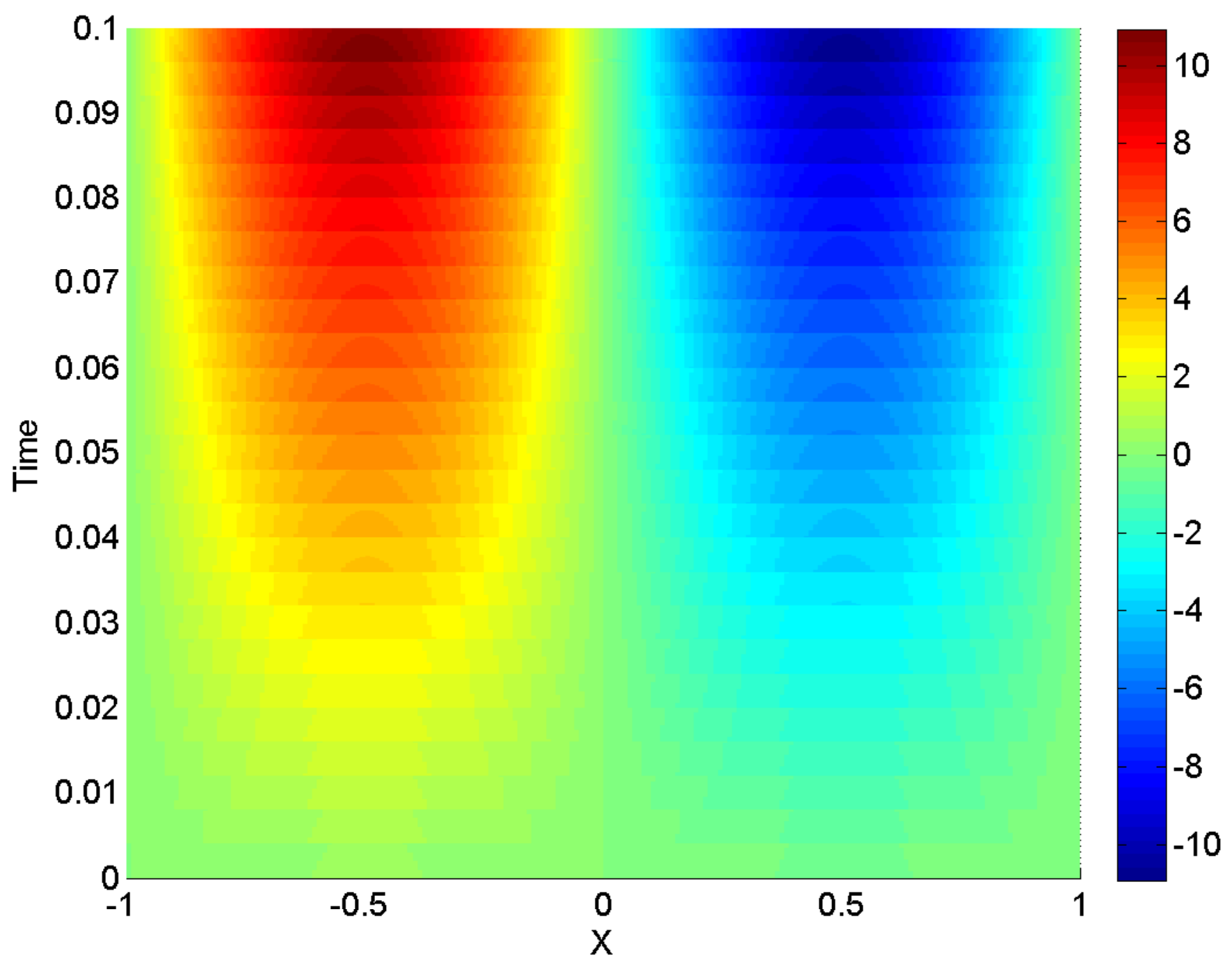}}
  \caption{The evolution of forward, tangent linear, and adjoint variables for the heat equation (equations \eqref{eqn:he} and \eqref{eqn:periodicpde}).}
\end{figure}
\begin{figure}[ht]
 \centering
 \subfigure[Data errors\label{fig:dataerrors}]{\includegraphics[scale=0.3]{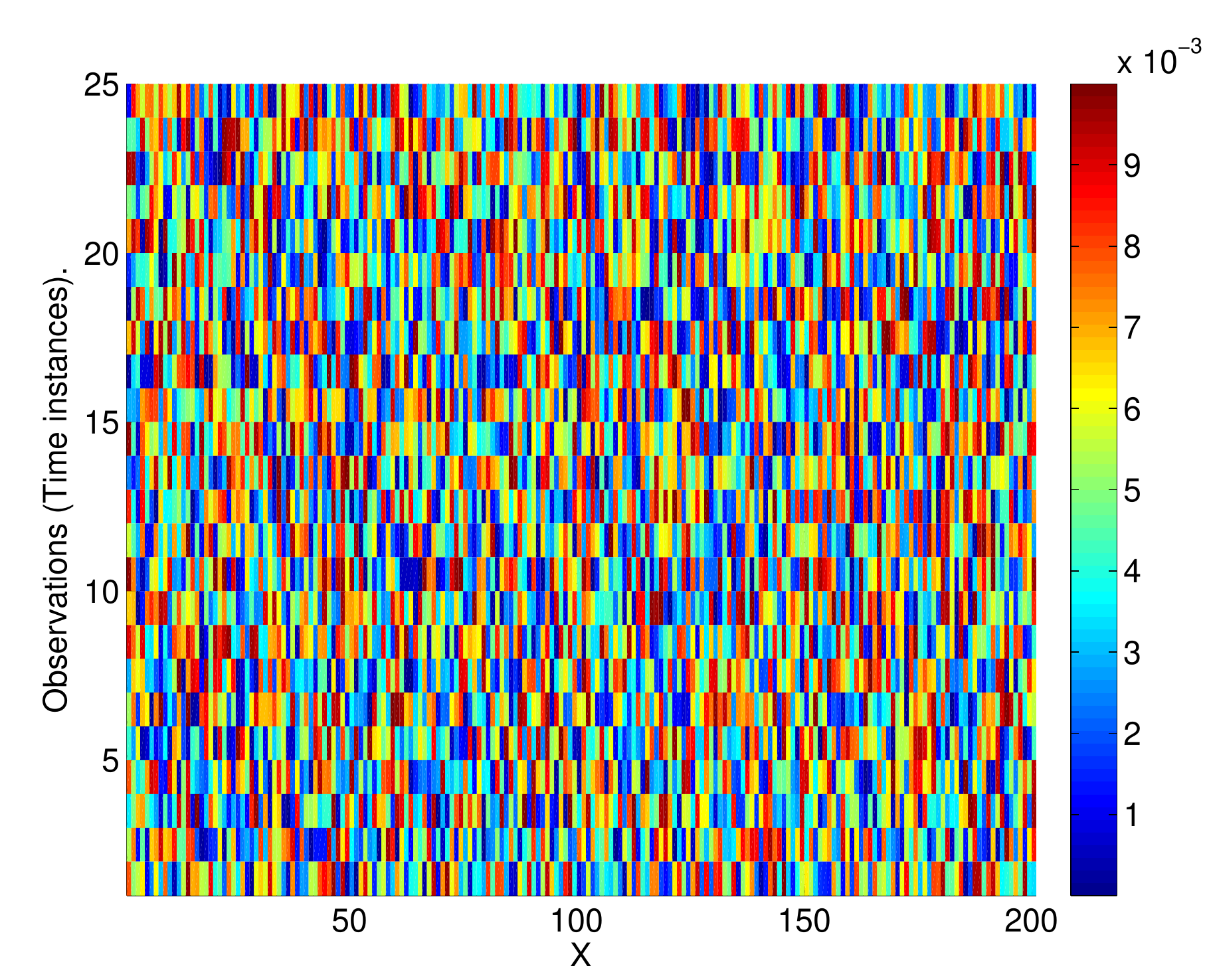}}
 \subfigure[Contributions of data errors to error in the quantity of interest \eqref{eq:eactual}\label{fig:datacontr}]{\includegraphics[scale=0.3]{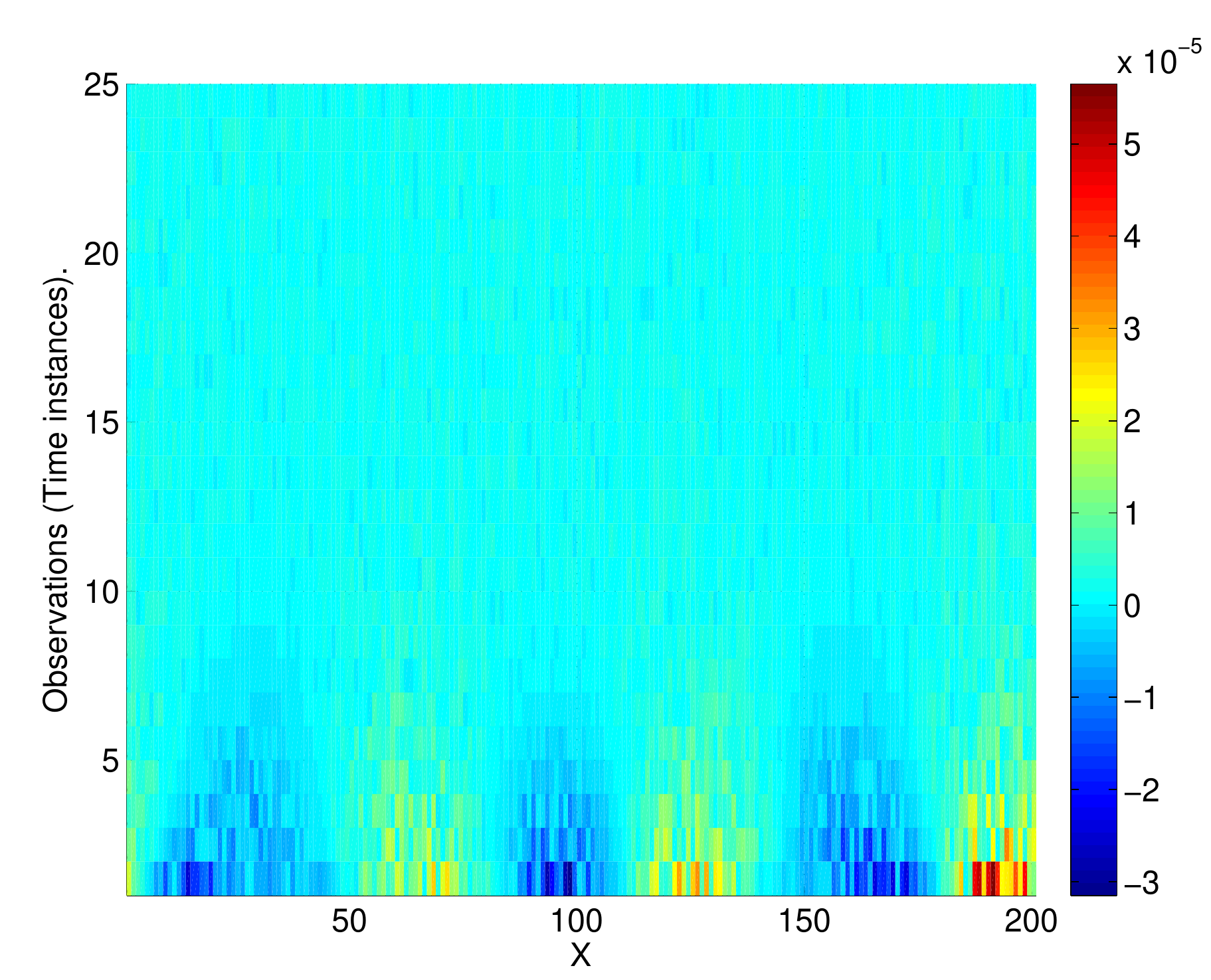}}
 \subfigure[Contributions of model errors to error in the quantity of interest \eqref{eq:eactual}\label{fig:modelcontr}]{\includegraphics[scale=0.3]{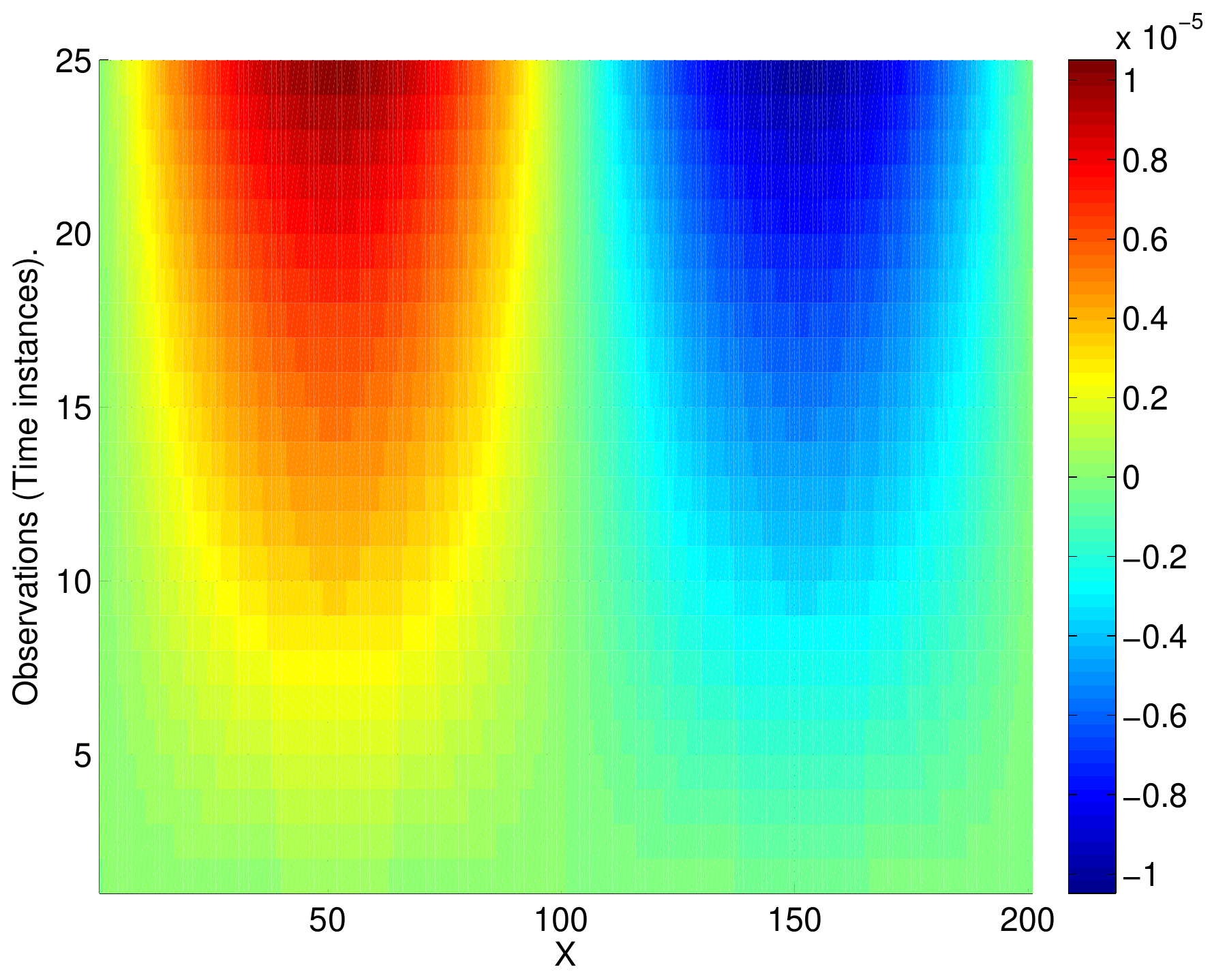}}
 \caption{Data errors at different grid points for the heat equation (equations \eqref{eqn:he} and \eqref{eqn:periodicpde}) and the contributions to the error functional resulting from data and model errors.}
\end{figure}
%
\subsection{Shallow water model on a sphere}\label{sec:swe}
The shallow water equations have been used to model the atmosphere for many years. They contain the essential wave propagation mechanisms found in general circulation models (GCMs)\cite{Amik:2007}. The shallow water equations in spherical coordinates are:
\begin{subequations}
\label{eqn:swe}
\begin{eqnarray}
 \frac{\partial u}{\partial t} + \frac{1}{a\cos \theta} \left( u \frac{\partial u}{\partial \lambda} + v \cos \theta \frac{\partial u}{\partial \theta} \right) - \left(f + \frac{u \tan \theta}{a} \right) v + \frac{g}{a \cos \theta} \frac{\partial h} {\partial \lambda} = 0, \\
 \frac{\partial v}{\partial t} + \frac{1}{a\cos \theta} \left( u \frac{\partial v}{\partial \lambda} + v \cos \theta \frac{\partial v}{\partial \theta} \right) + \left(f + \frac{u \tan \theta}{a} \right) u + \frac{g}{a} \frac{\partial h} {\partial \theta} = 0, \\
 \frac{\partial h}{\partial \theta} + \frac{1}{a \cos \theta} \left(\frac{\partial\left(hu\right)}{\partial \lambda} + \frac{\partial{\left(hv \cos \theta \right)}}{\partial \theta} \right) = 0.
\end{eqnarray}
\end{subequations}
Here, $f$ is the Coriolis parameter given by $f = 2 \Omega \sin \theta$, where  $\Omega$ is the angular speed of the rotation of the Earth, $h$ is the height of the homogeneous atmosphere, $u$ and $v$ are the zonal and meridional wind components, respectively, $\theta$ and $\lambda$ are the latitudinal and longitudinal directions, respectively, $a$ is the radius of the earth and $g$ is the gravitational constant. The space discretization is performed using the unstaggered Turkel-Zwas scheme \cite{neta:1997}. The discretization has nlon=72 nodes in longitudinal direction and  nlat=36 nodes in the latitudinal direction. The code we use for the forward model is a {\sc matlab} version of the {\sc fortran} code developed by Daescu and Navon and used in the paper \cite{NavonDaescu:2004}. The semi-discretization in space leads to the following discrete model: 
%
%
\begin{equation}\label{swe:ode}
 \x_{k+1} = \mathcal{M}\left(\x_k, \theta\right) \quad \quad \x_0 = \x_0\left(\theta\right), \quad \quad k = 0, \dots, N.
\end{equation}
  In \eqref{swe:ode}, the zonal wind, meridional wind and the height variables are combined into the vector $\x \in \mathbb{R}^n$ with $n=3\times{\rm nlat}\times{\rm nlon}$. We perform the time integration using an adaptive time-stepping algorithm. For a tolerance of $\displaystyle 10^{-8}$ the average time step size is 180 seconds. A reference initial condition is used to generate a reference trajectory.

Synthetic observation errors at various times $t_k$ are normally distributed  with mean zero and a diagonal observation error covariance matrix with entries equal to $(\mathbf{R_k})_{i,i}=1$ for $u$ and $v$ components and
 $(\mathbf{R_k})_{i,i}=10^6$ for $h$ components. The $\mathbf{R_k}$ values correspond to a standard deviation of $5\%$ for $u$ and $v$ components, and $2\%$ for $h$ component.
 Synthetic observations are obtained by adding the synthetic observation noise to the reference solution at times $t_k$.
 The background error covariance matrix is also diagonal with entries equal to $(\mathbf{B_0})_{i,i}=1$ for $u$ and $v$ components and $(\mathbf{B_0})_{i,i}=10^6$ for $h$ components.
 
Model errors are introduced in the form of random correlated noise. We build statistical models of model errors and consider different realizations in Section \ref{sec:modelerrors}. The cost function has the form \eqref{eqn:4D-Var:imperfect}. 

The \qoi is the mean of the height component of the analysis (the optimal initial condition)
\begin{equation} 
\label{eqn:errfunc}
\mathcal{E}\left(\x^{\rm a}_0\right) = \frac{1}{{\rm nlat} \times {\rm nlon}}\,\sum_{i=2\times {\rm nlat} \times {\rm nlon}+1}^{3\times {\rm nlat} \times {\rm nlon}} \left(\x^{\rm a}_0\right)_i.
\end{equation}
%
\subsection{Statistical models for model errors}\label{sec:modelerrors}
To realistically simulate model errors we consider differences between the shallow water solutions obtain on a coarse and on a fine grid. The coarse grid was discussed in Section \ref{sec:swe}.  The fine grid has a spatial resolution of {\rm nlat $\times$ nlon = 108 $\times$ 72}, three times smaller than the coarse grid. The time integration is also performed at a finer temporal resolution realized by using the {\sc matlab}'s {\sc ode45} integrator. The {\sc{atol}} and {\sc{rtol}} are both set to $10^{-12}$. The solution fields obtained one the fine grid are perturbed to produce synthetic observations, which are then used for the coarse grid data assimilation.

The differences between model solutions on the fine grid (projected onto the coarse) and on coarse grid are used as proxies for the model errors. The procedure used to generate the ensemble of model errors is as follows. Integrate the model on the fine grid for the simulation window. Divide the simulation window into sub-intervals $[t_k,t_{k+1}]$ of length $t_{k+1}-t_k=400$ seconds. At the beginning of each sub-interval project the solution values from the fine grid onto the coarse grid. Use these values as coarse grid initial solutions, and run the coarse model on each sub-interval. The differences between the coarse and fine solutions at the end of each sub-interval $[t_k,t_{k+1}]$ (projected onto the coarse model space) represent the model error terms $\Delta\widehat{\x}_{k+1}$ in \eqref{eqn:model-p}. The procedure summarized above, is used to generate a total of 216 error vectors (an ensemble member is collected every 400 seconds for a period of 24 hours). We make the assumption that model errors are 
stationary and use this ensemble of differences to build statistical models of model errors. 

To find an appropriate description of model errors we consider a variety of distributions and fit the model errors using the Bayesian information criterion (BIC). The BIC is a criterion for model selection among a finite set of models that resolves the problem of overfitting by introducing a penalty term for the number of parameters in the model \cite{Schwarz:1978, allfitdist}.
\begin{figure}[ht]
 \centering
 \subfigure[The accuracy of fit for different distributions for a zonal velocity component \newline(Lat = $87^\circ 30^{\prime}\text{S } 7^{\prime \prime}$, Lon = $174^\circ\text{W } 10^\prime 3^{\prime \prime}$ ). ]{\includegraphics[width=0.475\textwidth]{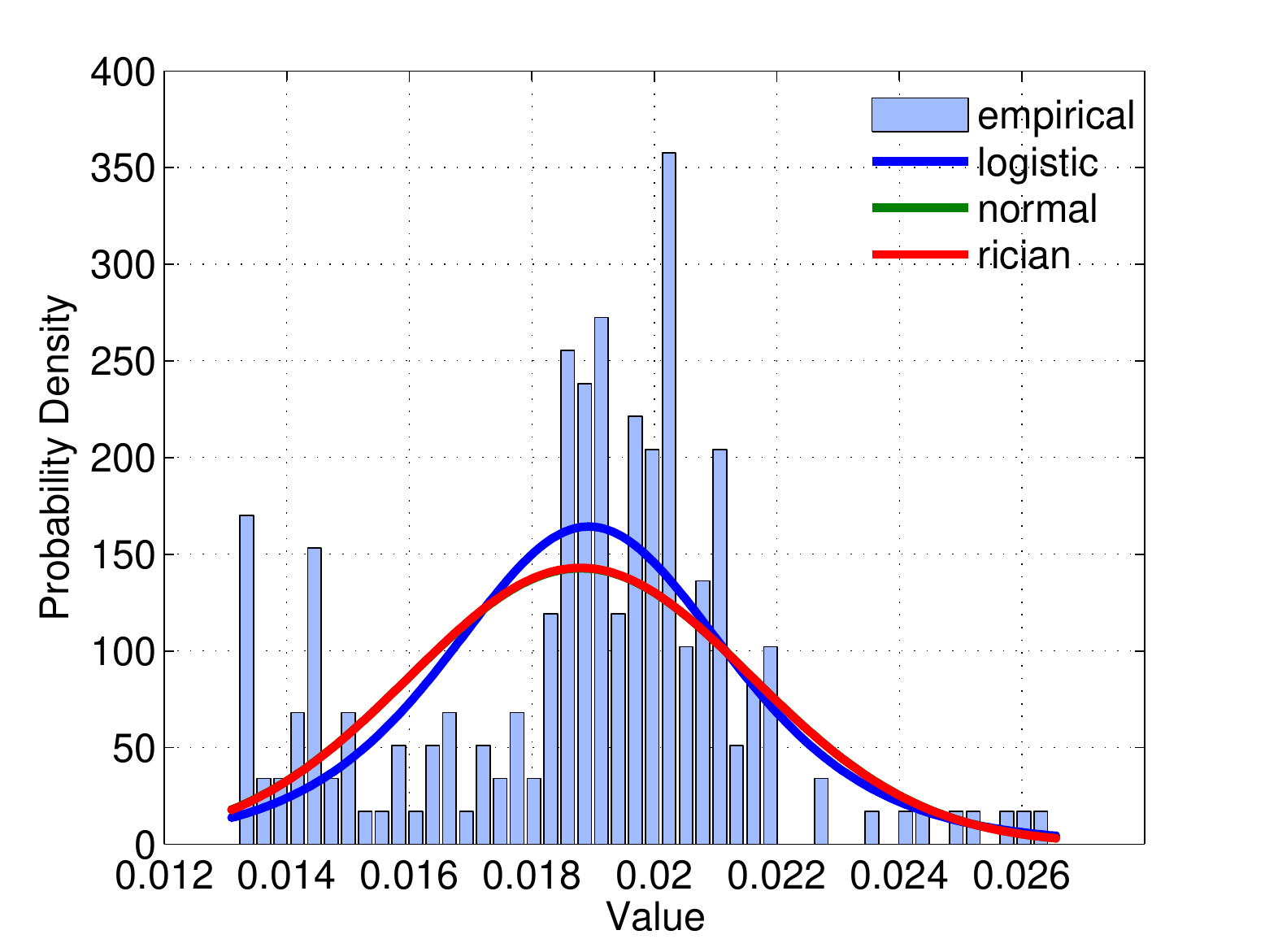}}
 \subfigure[The accuracy of fit for different distributions for a zonal velocity component \newline(Lat = $87^\circ\text{S } 30^{\prime} 7^{\prime \prime}$, Lon = $177^\circ\text{E } 30^\prime 9^{\prime \prime}$ ).]{\includegraphics[width=0.475\textwidth]{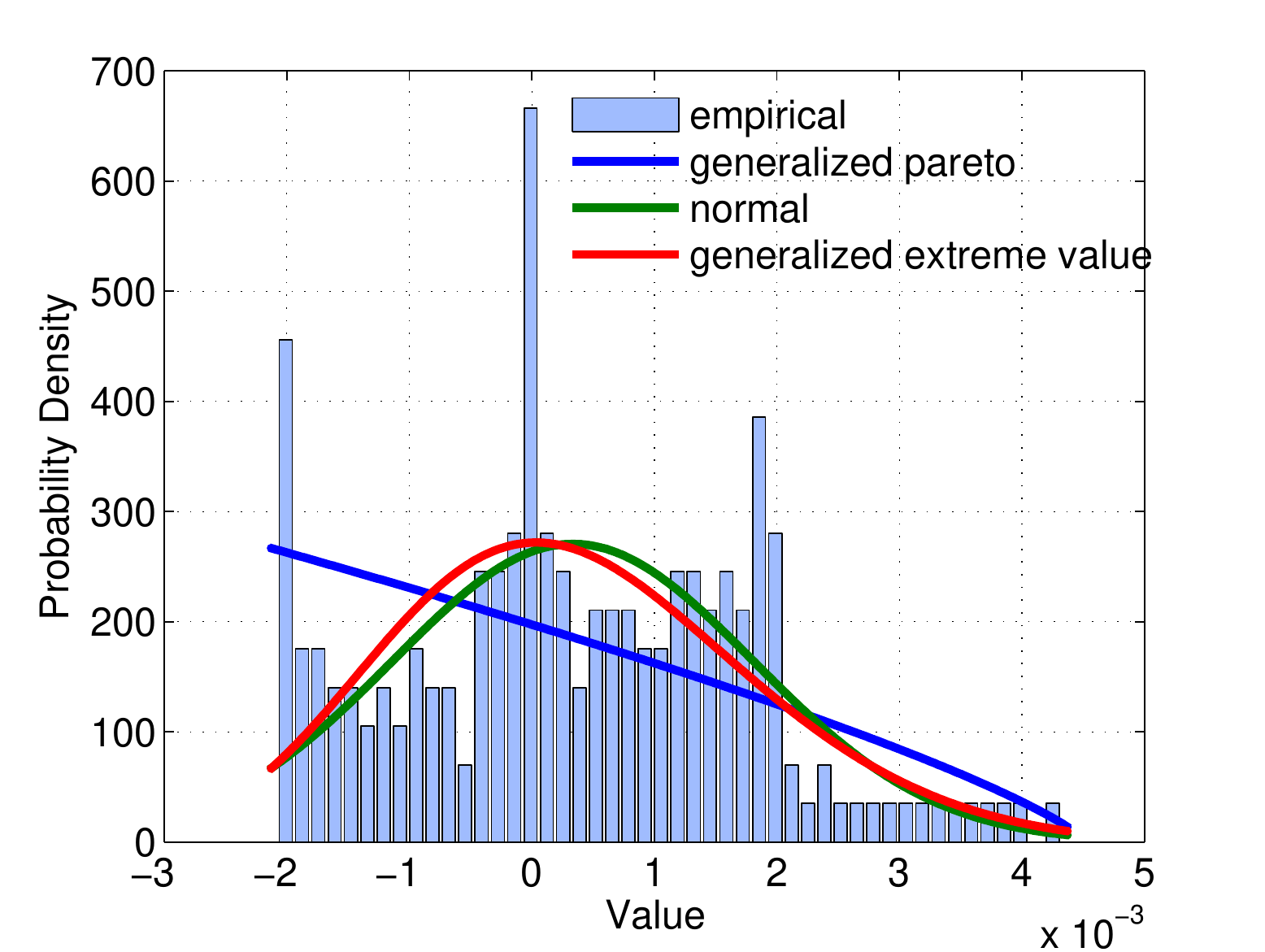}}
 \caption{The model errors for the shallow water equations \eqref{eqn:swe} are fit to different distributions based on the Bayesian information criterion for some samples. The plots show the top three best fits.}
 \label{fig:DistributionFits1}
\end{figure}

We first seek one probability distribution that can best describe the errors at each of the 7,776 individual grid points. Different distribution families are used to fit the ensembles of errors. As shown in Figures \ref{fig:DistributionFits1} and \ref{fig:DistributionFits2}, no distribution fits the error completely satisfactorily. The BIC criterion ranks the suitability of different distributions for each grid point, and Table \ref{tab:distfits} shows the number of grid points where the most successful fits appear in top three. Since the normal distribution consistently ranks in the top three we choose to model the model errors as a Gaussian process. There is a considerable inter-grid correlation of errors.  The scaled Bessel functions of the first kind \cite{Abramowitz:2012} are used to model inter-grid correlation functions and their parameters are obtained by fitting to the actual values. Figure \ref{fig:CorrelationModeling} shows the comparison between actual correlation values and the correlation 
modeled with Bessel functions. We construct the error correlation matrix using inter-grid correlations modeled by the Bessel functions of the first kind. We use the resulting covariance matrix and  the mean of the ensembles of real errors to generate different realizations of model errors. These realizations correspond to the terms $\Delta \widehat{\x}_{k+1}$ in equation \eqref{eqn:model-p}. The multiple instances of model errors help with the statistical validation of the a posteriori error estimates discussed in Section \ref{sec:statValEsts}.
\begin{table}[ht]
 \centering
 \begin{tabular}{|l|c|}
  \hline
  Distribution name & No of best fits in top three \\ \hline
  Generalized extreme value & 7,608 \\ \hline
  Normal & 7,247\\ \hline
  Tlocation scale & 7,052 \\ \hline
 \end{tabular}
 \caption{The number of best fits for selected distributions. This is the number of grid points where the distributions are ranked in top three by the Bayesian information criterion as the comparison metric.}
 \label{tab:distfits}
\end{table}
 \begin{figure}[ht]
 \centering
 \subfigure[The accuracy of fit for different distributions for a meridional velocity component \newline(Lat = $87^\circ\text{S } 30^{\prime} 7^{\prime \prime}$, Lon = $114^\circ\text{W } 10^\prime 1^{\prime \prime}$ ).]{\includegraphics[width=0.475\textwidth]{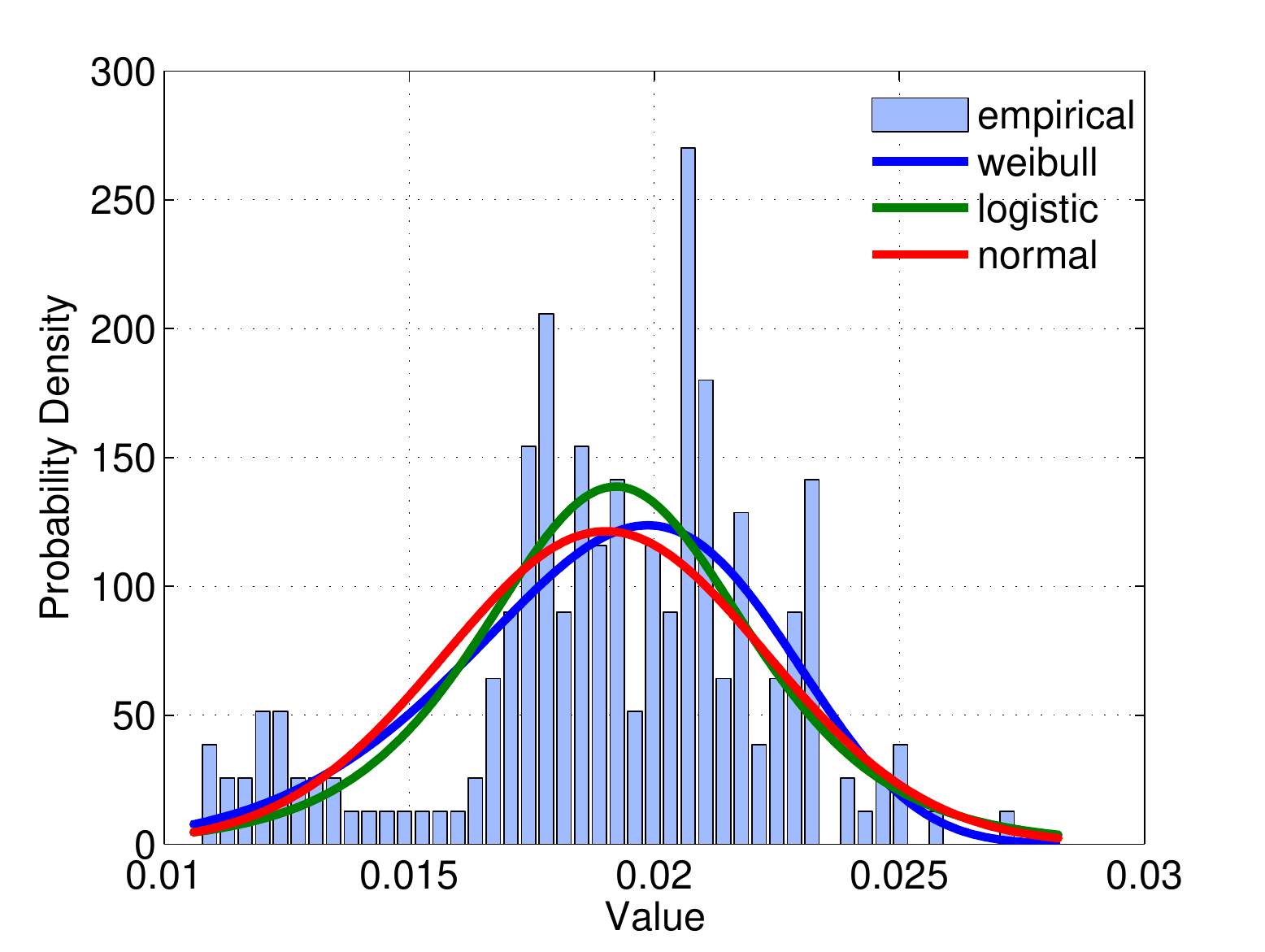}}
 \subfigure[The accuracy of fit for different distributions for a height component \newline(Lat = $87^\circ\text{N } 30^{\prime} 7^{\prime \prime}$, Lon = $52^\circ\text{E } 30^\prime 1^{\prime \prime}$ ).]{\includegraphics[width=0.475\textwidth]{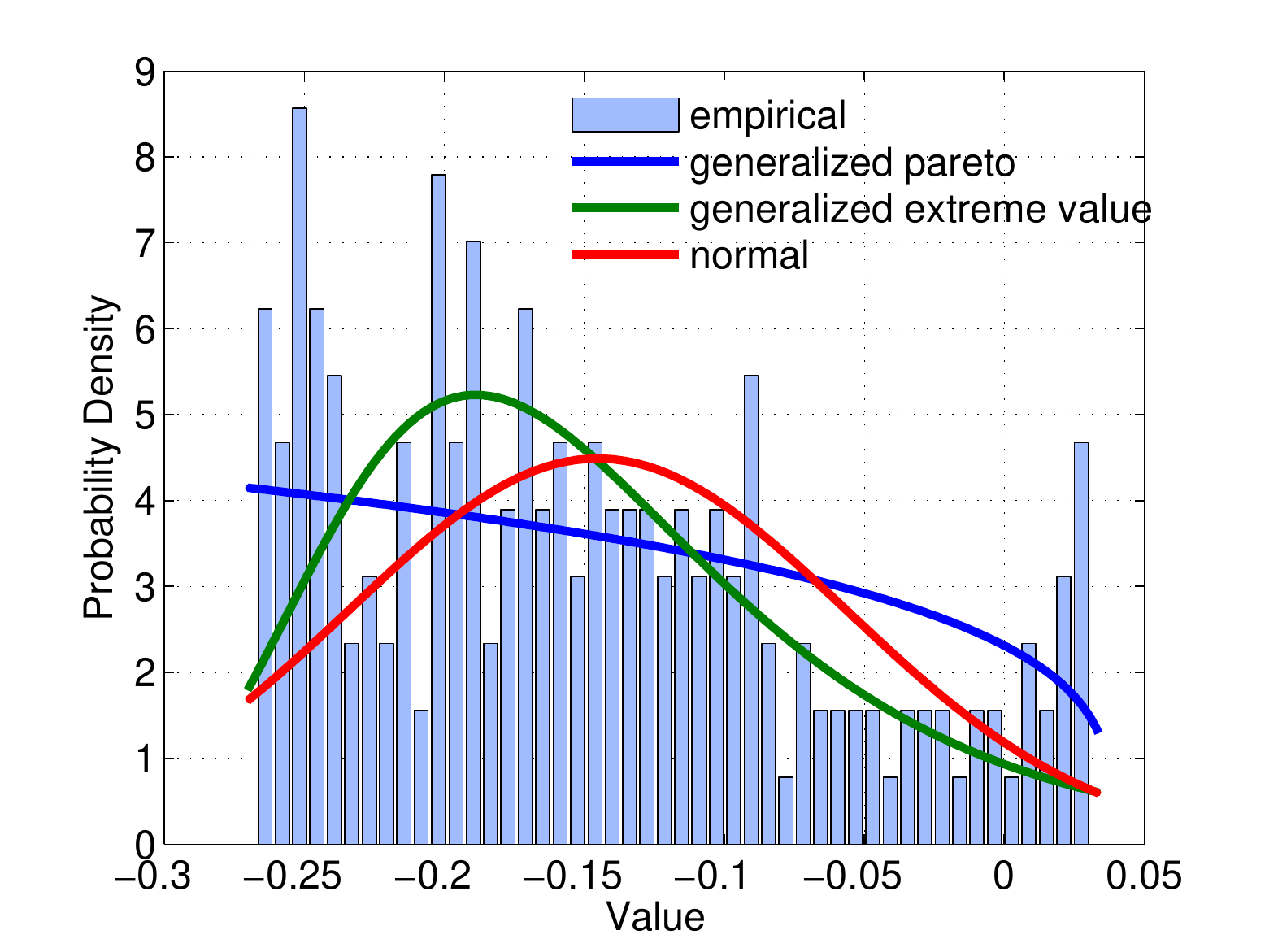}}
 \caption{The model errors are fit to different distributions for the shallow water model \eqref{eqn:swe} based on the Bayesian information criterion for some samples. The plots show the top three best fits.}
 \label{fig:DistributionFits2}
\end{figure}
\begin{figure}[ht]
 \centering
 \subfigure[East-West correlation of model errors]{\includegraphics[width=0.475\textwidth]{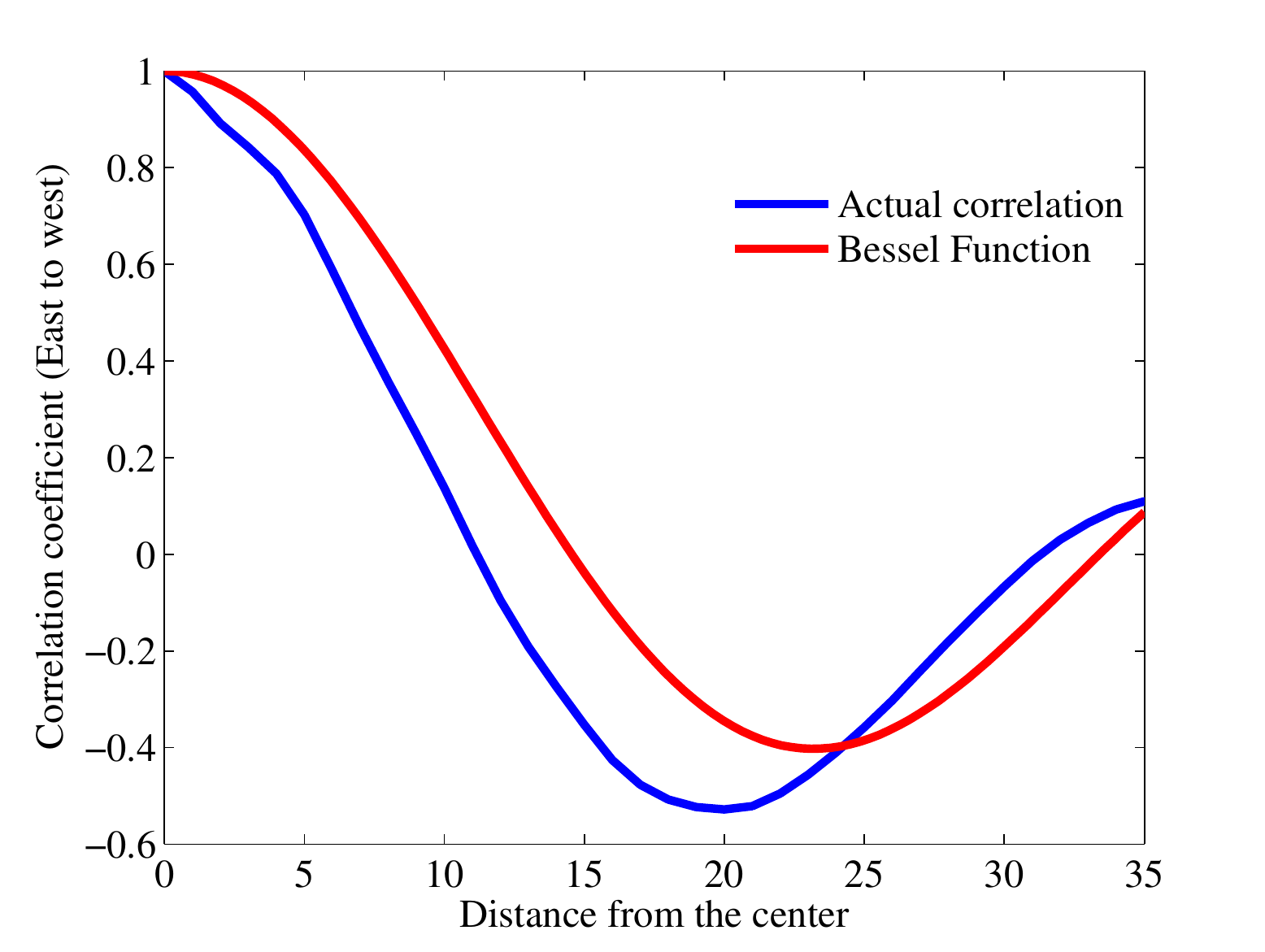}}
 \subfigure[North-South correlation of model errors]{\includegraphics[width=0.475\textwidth]{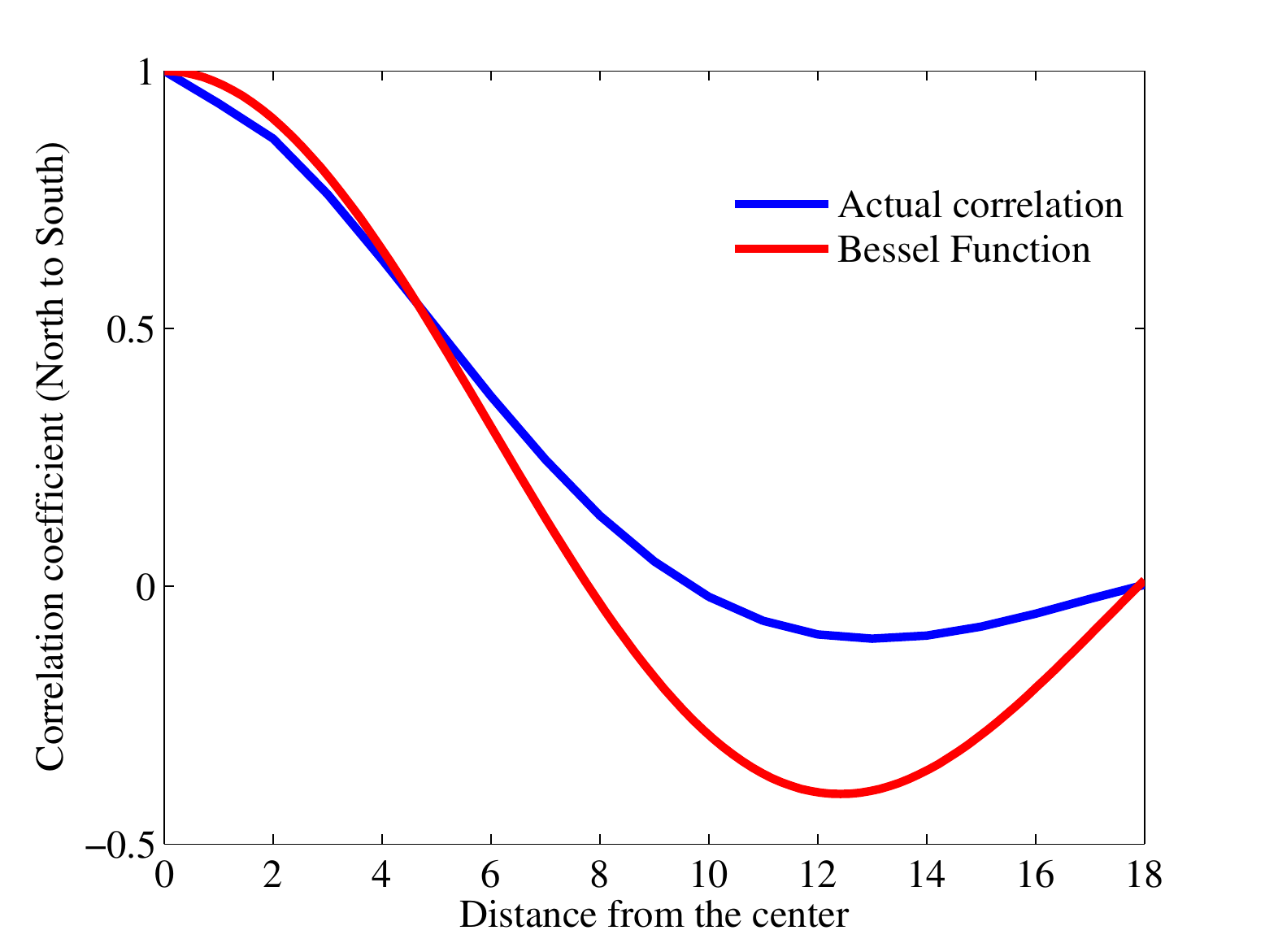}}
 \caption{Correlations between model errors at neighboring grid points. The actual values are obtained form the ensemble of runs. They are modeled by Bessel functions of the first kind.}
\label{fig:CorrelationModeling}
\end{figure}
%

\subsection{Validation of a posteriori error estimates in deterministic setting}
A posteriori estimates for the error in the \qoi \eqref{eqn:errfunc} due to data and model errors in 4D-Var data assimilation with the shallow water system
are computed using the methodology discussed in the Section \ref{sec:discretemodels}. Table \ref{tab:error:swe} compares the actual errors \eqref{eqn:errorE} and the estimated errors \eqref{eqn:discimpactFdvar}. We observe that the estimates are fairly accurate. Figure \ref{fig:obsErrContr:swe} shows the errors in the individual observations (which are independent and normally distributed) and the corresponding contributions of different observation errors to the error in the quantity of interest \eqref{eqn:errfunc}. We observe that certain grid points contribute to the error more than others. The data error contributions indicate the sensitive areas where measurements need to be more accurate in order to obtain a better analysis (as measured by the \qoi). Larger than expected data error contributions may also point to faulty sensors. Figure \ref{fig:ModelErrContr:swe} shows the model errors at different grid points and their contributions to the error in the \qoi \eqref{eqn:errfunc}. Some grid points are 
more sensitive than others to the errors in the model. This indicates the need for better physical representation, or for higher numerical accuracy (e.g., obtained by increasing grid resolution, or by using higher order time integration)  in the sensitive regions.
\begin{table}[ht]
 \centering
 \begin{tabular}{|l|c|c|}
  \hline
  &$\Delta\mathcal{E}^{\rm actual}$&$\Delta\mathcal{E}^{\rm est}$ \\ \hline
  Data Errors &  54.701 & 57.268 \\ \hline
  Model Errors&  1.9278 & 2.9683 \\ \hline
 \end{tabular}
 \caption{Comparison between actual errors in the \qoi and the a posteriori error estimates for the shallow water model in a deterministic setting.}
 \label{tab:error:swe}
\end{table}
 \begin{figure}[ht]
\begin{center}
\begin{tabular}{cc}
\subfigure[Data errors: Zonal wind velocity, {$u\,[m/s]$}]{\includegraphics[width=0.45\textwidth]{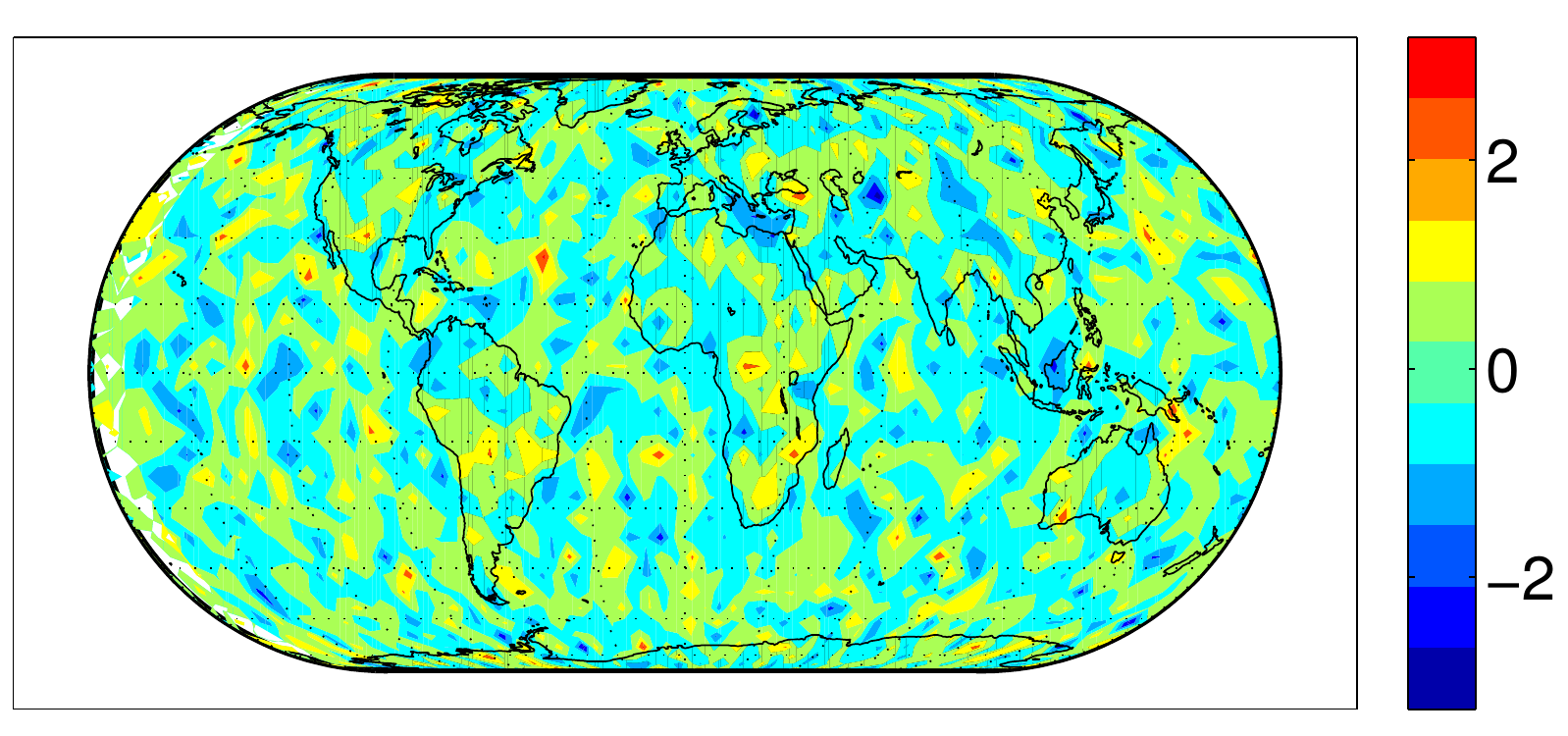}} &
\subfigure[Contributions of data errors: Zonal wind velocity]{\includegraphics[width=0.45\textwidth]{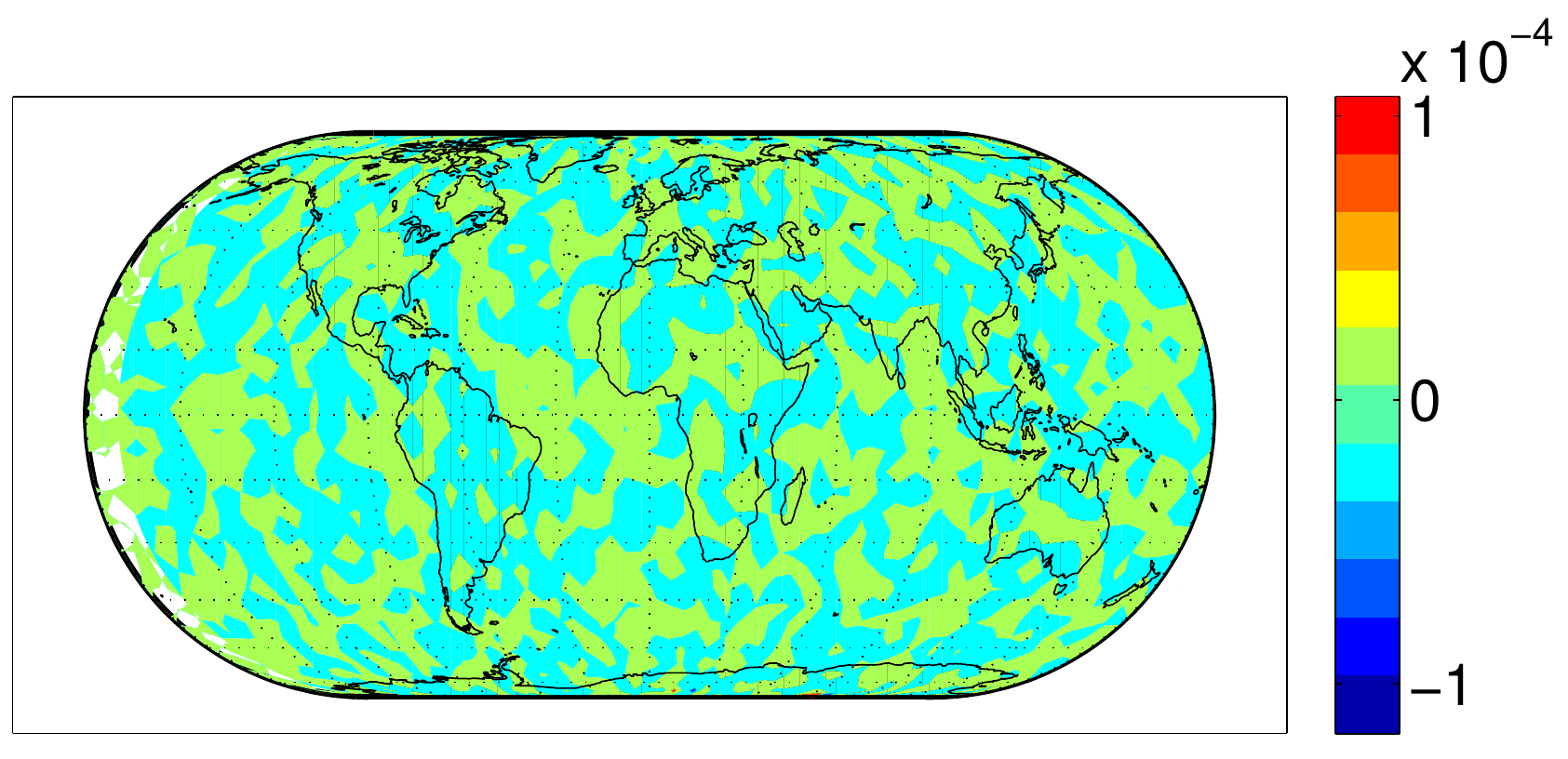}}\\
\subfigure[Data errors: Meridional wind velocity, {$v\,[m/s]$}]{\includegraphics[width=0.45\textwidth]{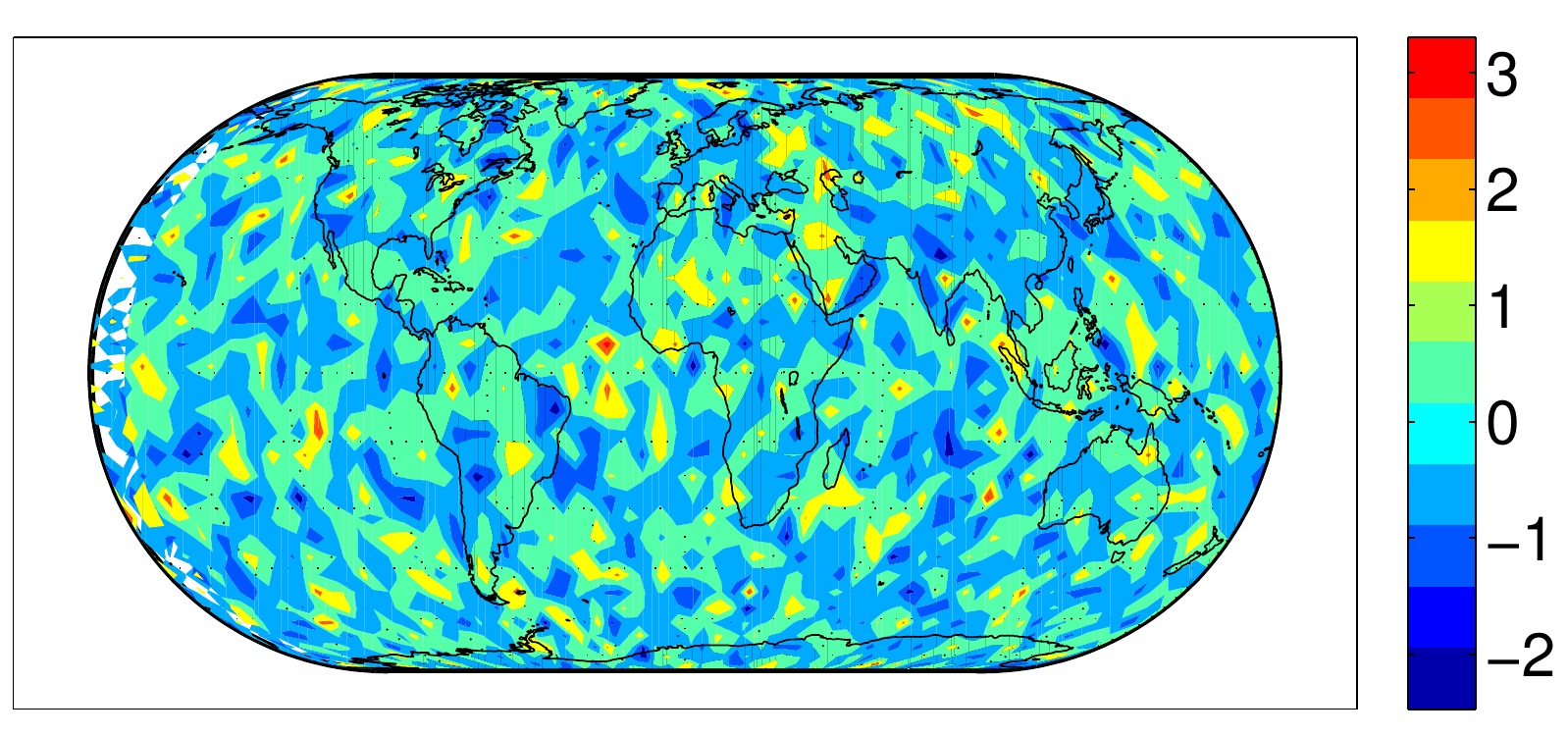}} &
\subfigure[Contributions of data errors: Meridional wind velocity]{\includegraphics[width=0.45\textwidth]{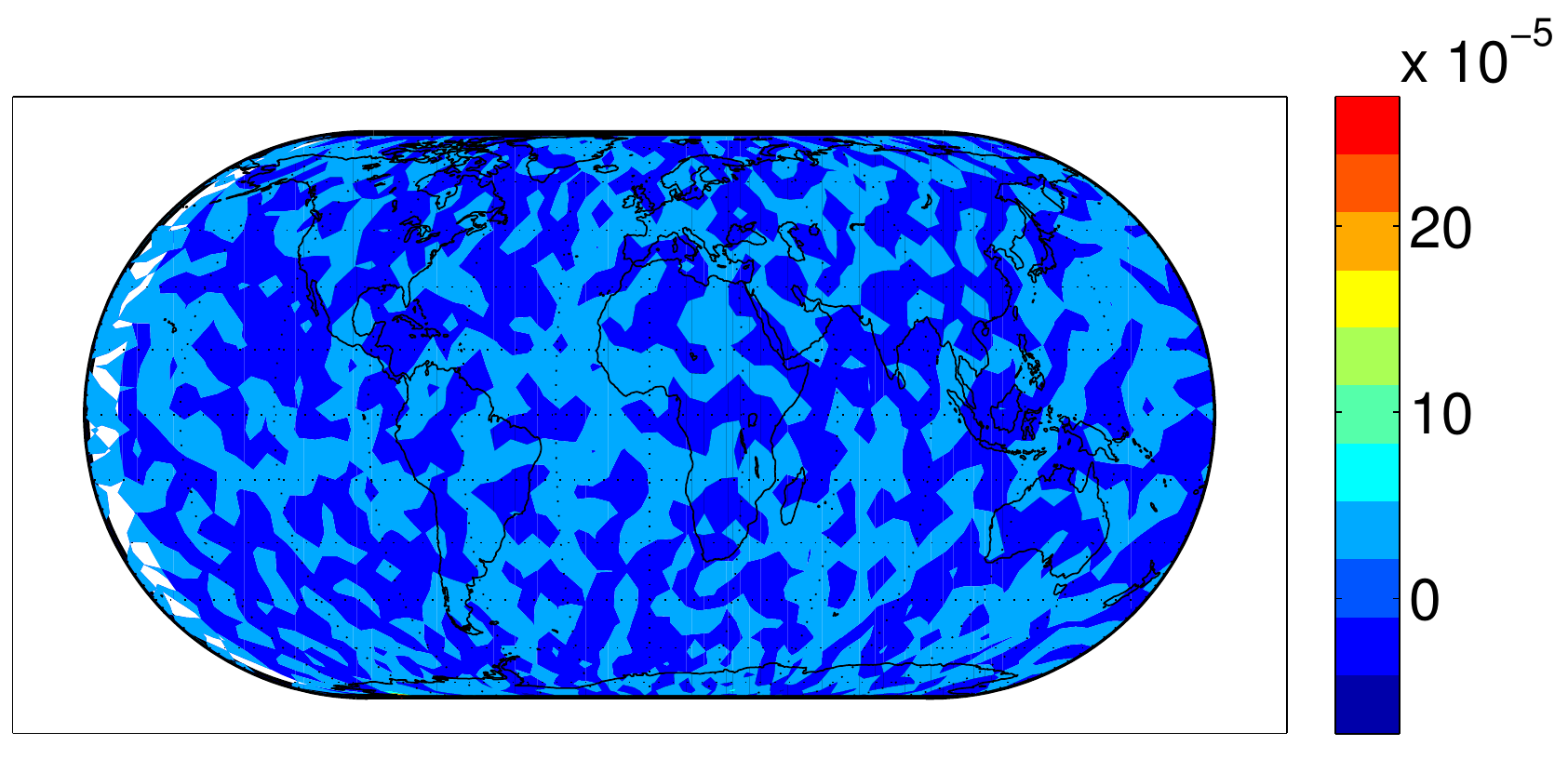}} \\
\subfigure[Data errors: Height, {$h\,[m]$}]{\includegraphics[width=0.45\textwidth]{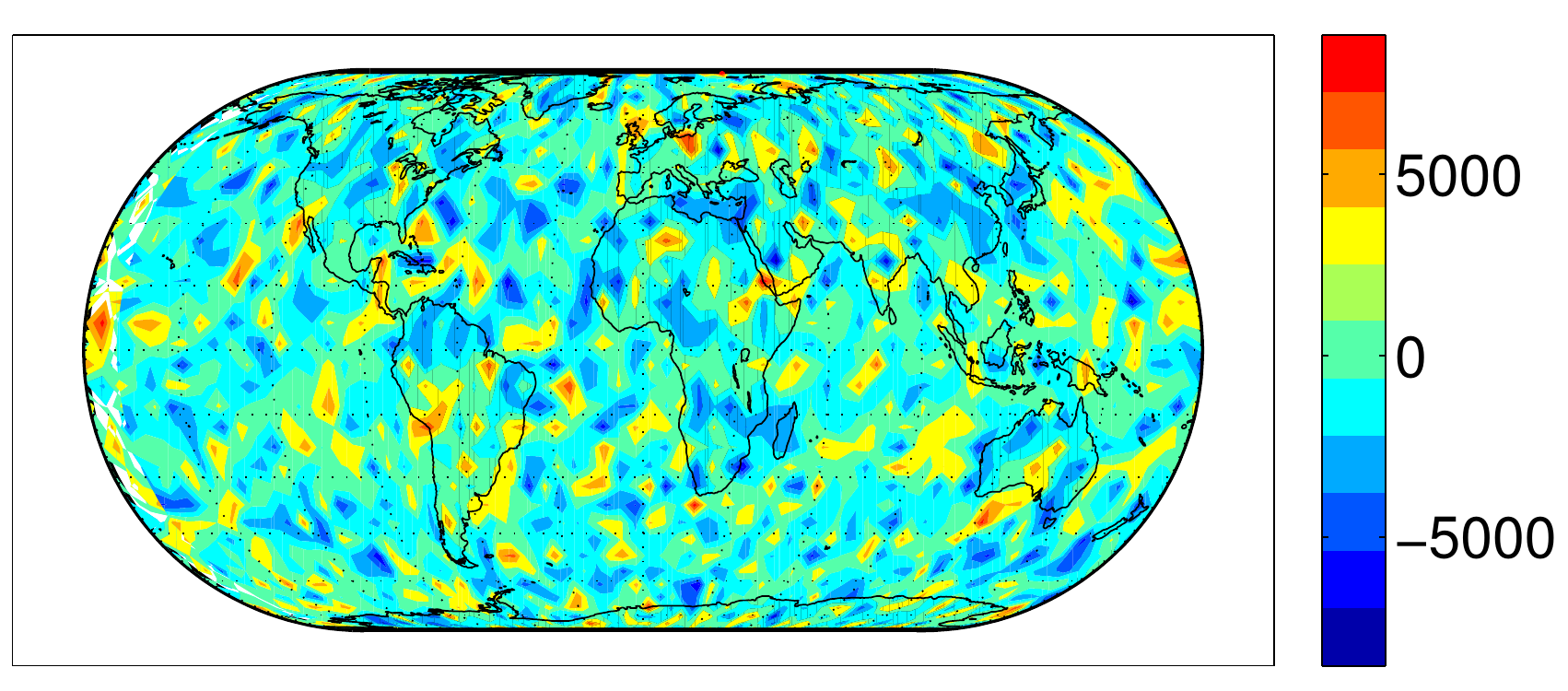}}&
\subfigure[Contributions of data errors: Height]{\includegraphics[width=0.45\textwidth]{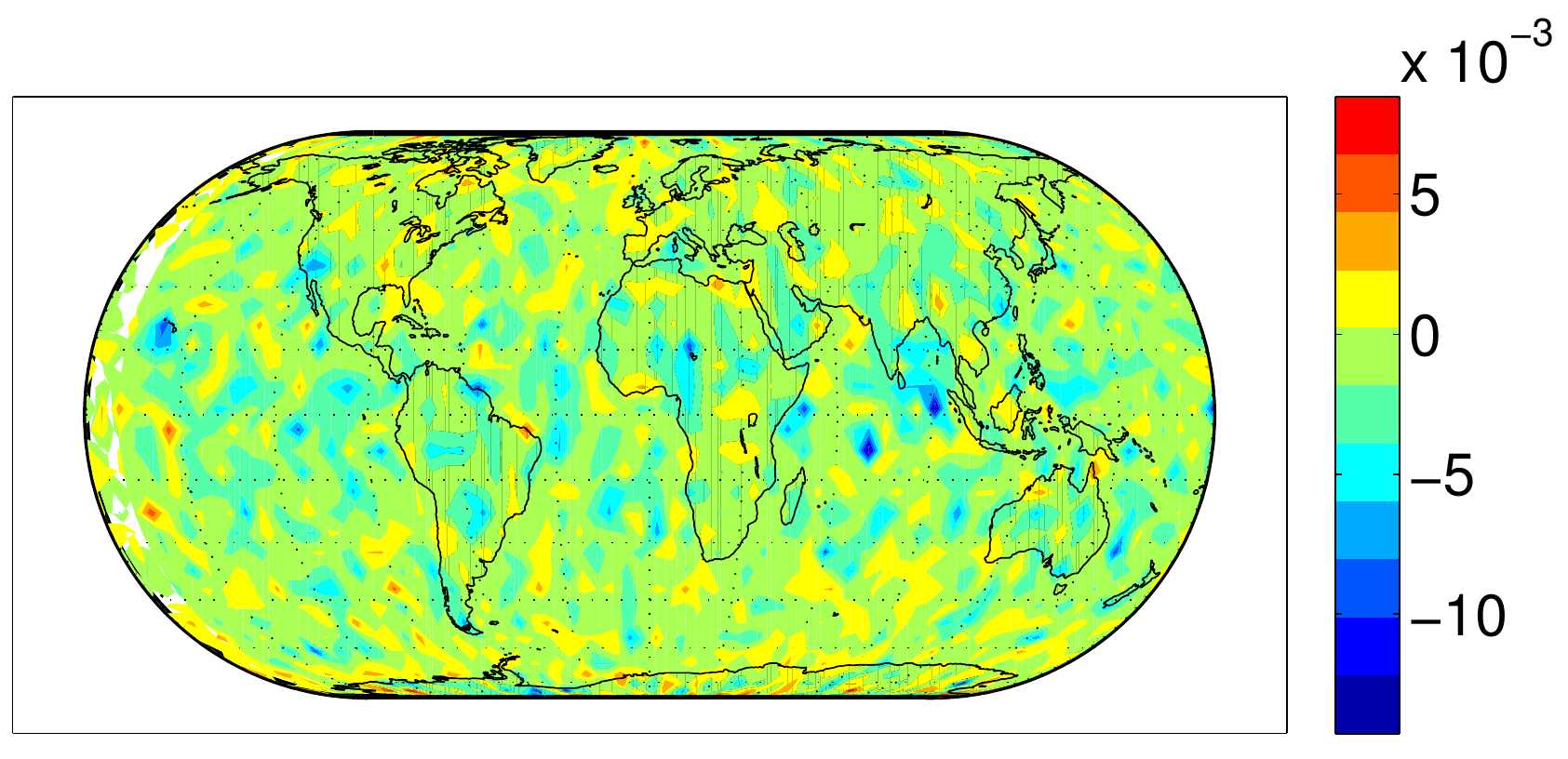}}
\end{tabular}
\end{center}
\caption{The figures on the left show the errors in the data collected for different variables at different grid points for the shallow water model \eqref{eqn:swe} at an observation time $t = 12h$. The figures on the right show the sum total of data error contributions at different grid points to the error functional  \eqref{eqn:errfunc} for hourly observations measured over a period of 24 hours.}
\label{fig:obsErrContr:swe}
\end{figure}
%
%
\begin{figure}[ht]
\begin{center}
\begin{tabular}{cc}
\subfigure[Model errors: Zonal wind velocity, {$u\,[m/s]$}]{\includegraphics[width=0.45\textwidth]{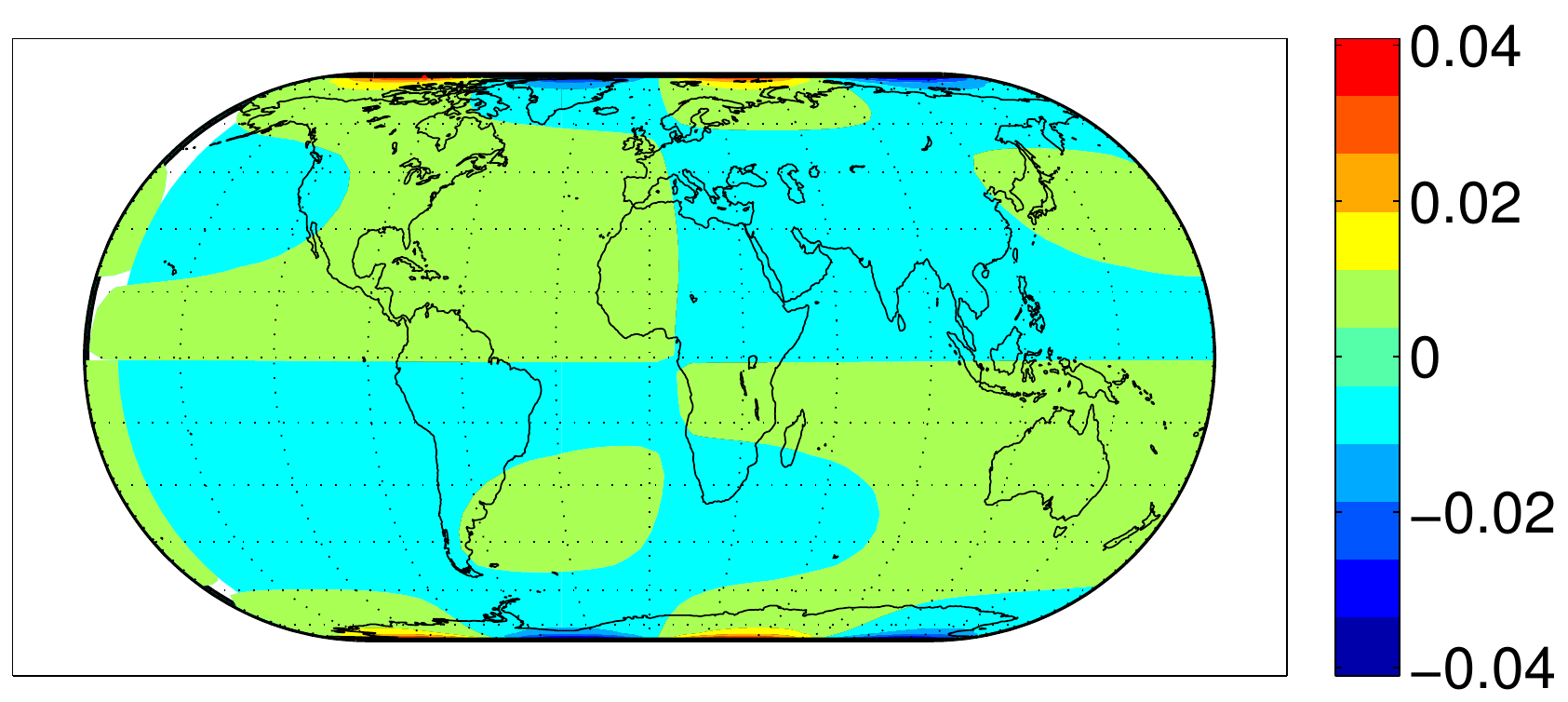}} &
\subfigure[Contributions of model errors: Zonal wind velocity]{\includegraphics[width=0.45\textwidth]{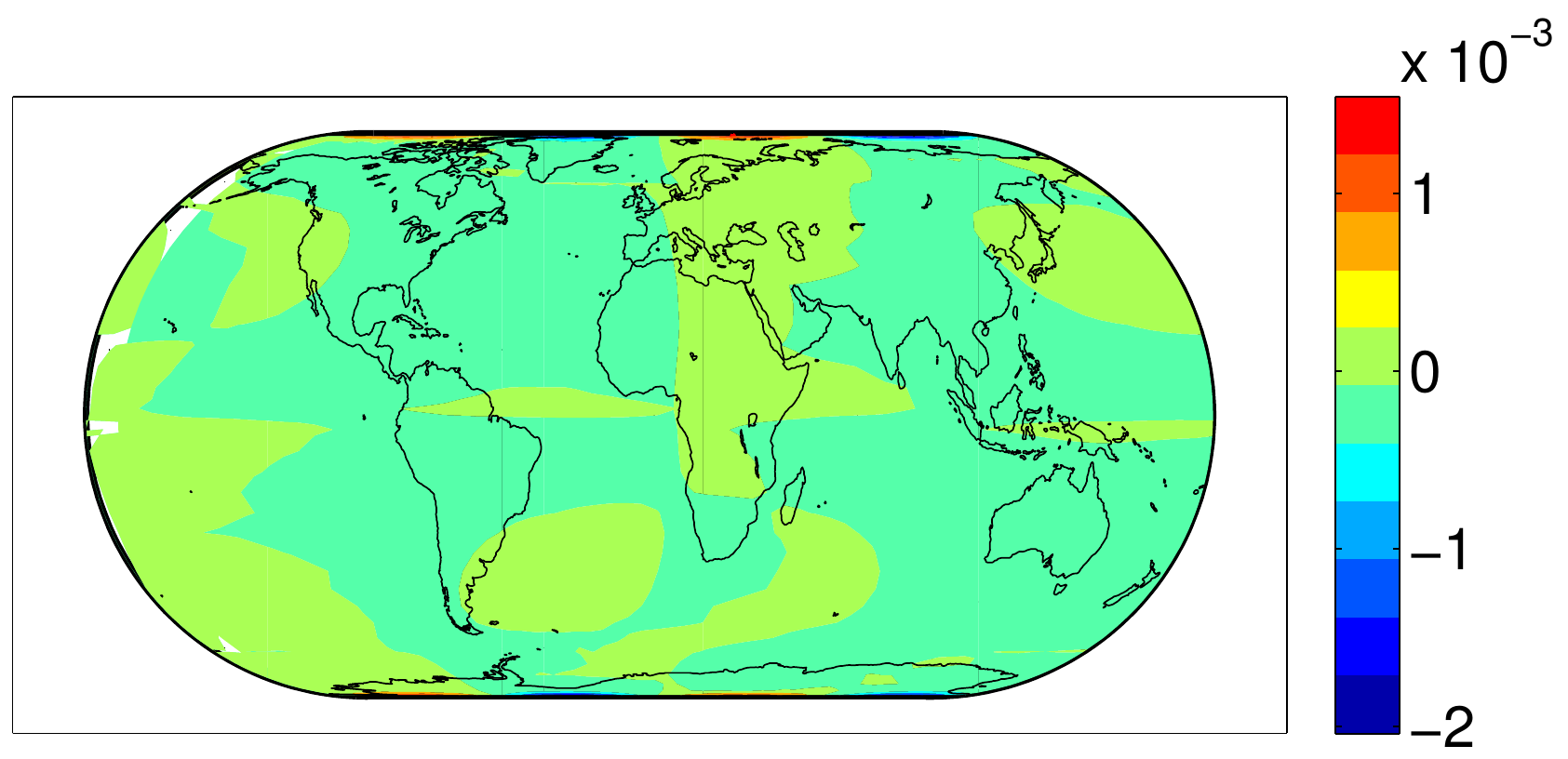}}\\
\subfigure[Model errors: Meridional wind velocity, {$v\,[m/s]$}]{\includegraphics[width=0.45\textwidth]{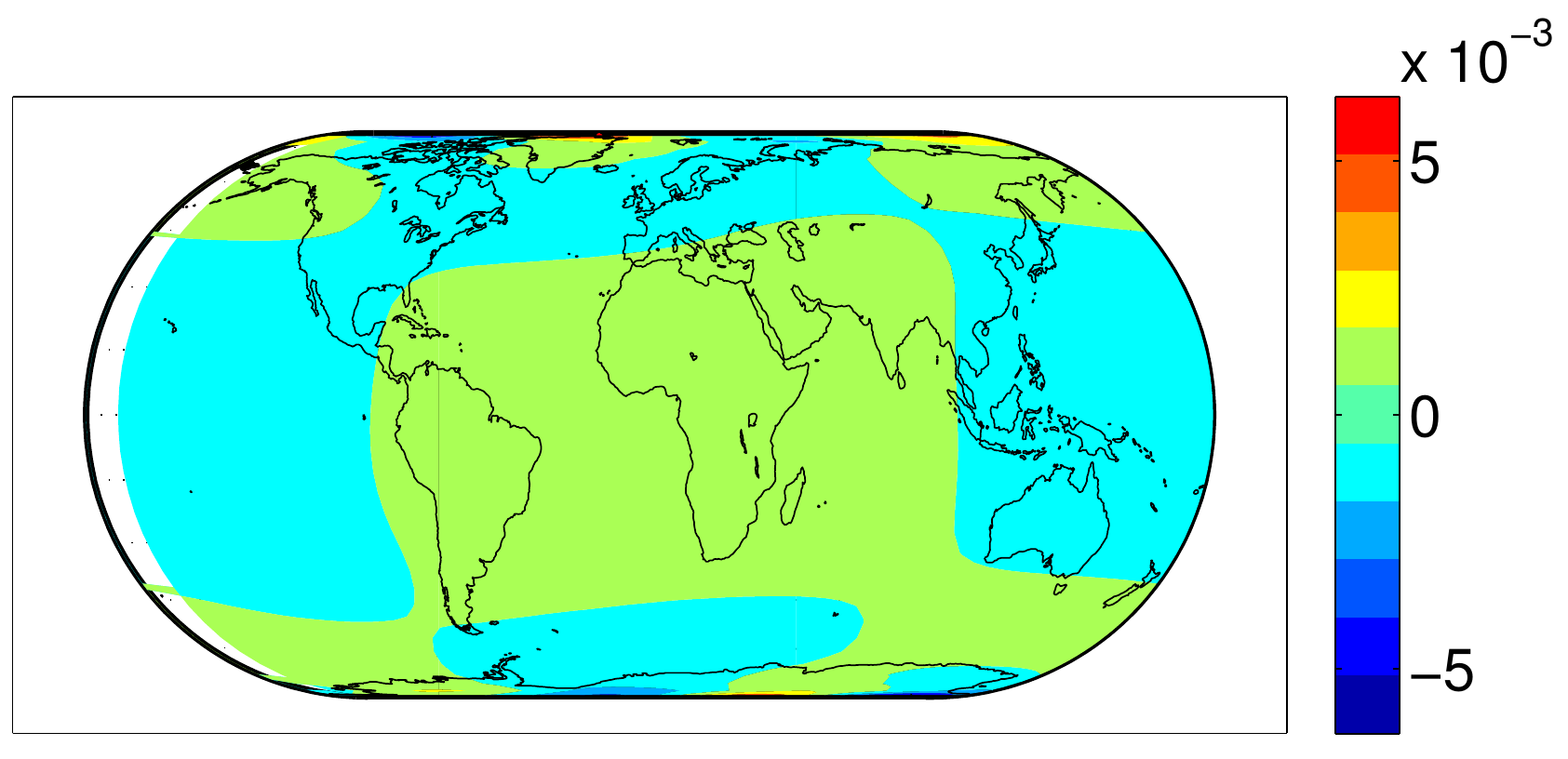}} &
\subfigure[Contributions of model errors: Meridional wind velocity]{\includegraphics[width=0.45\textwidth]{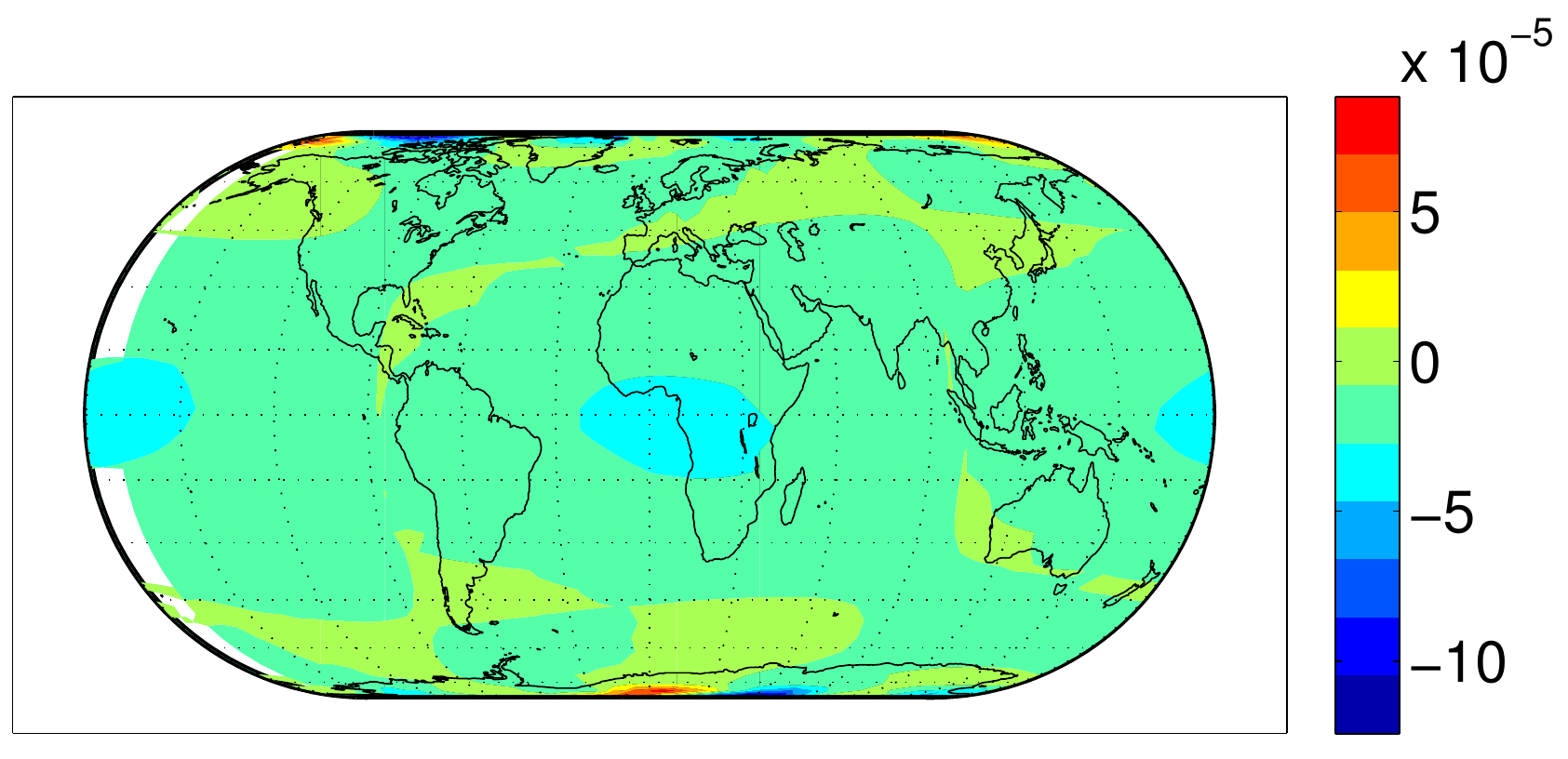}} \\
\subfigure[Data errors: Height, {$h\,[m]$}]{\includegraphics[width=0.45\textwidth]{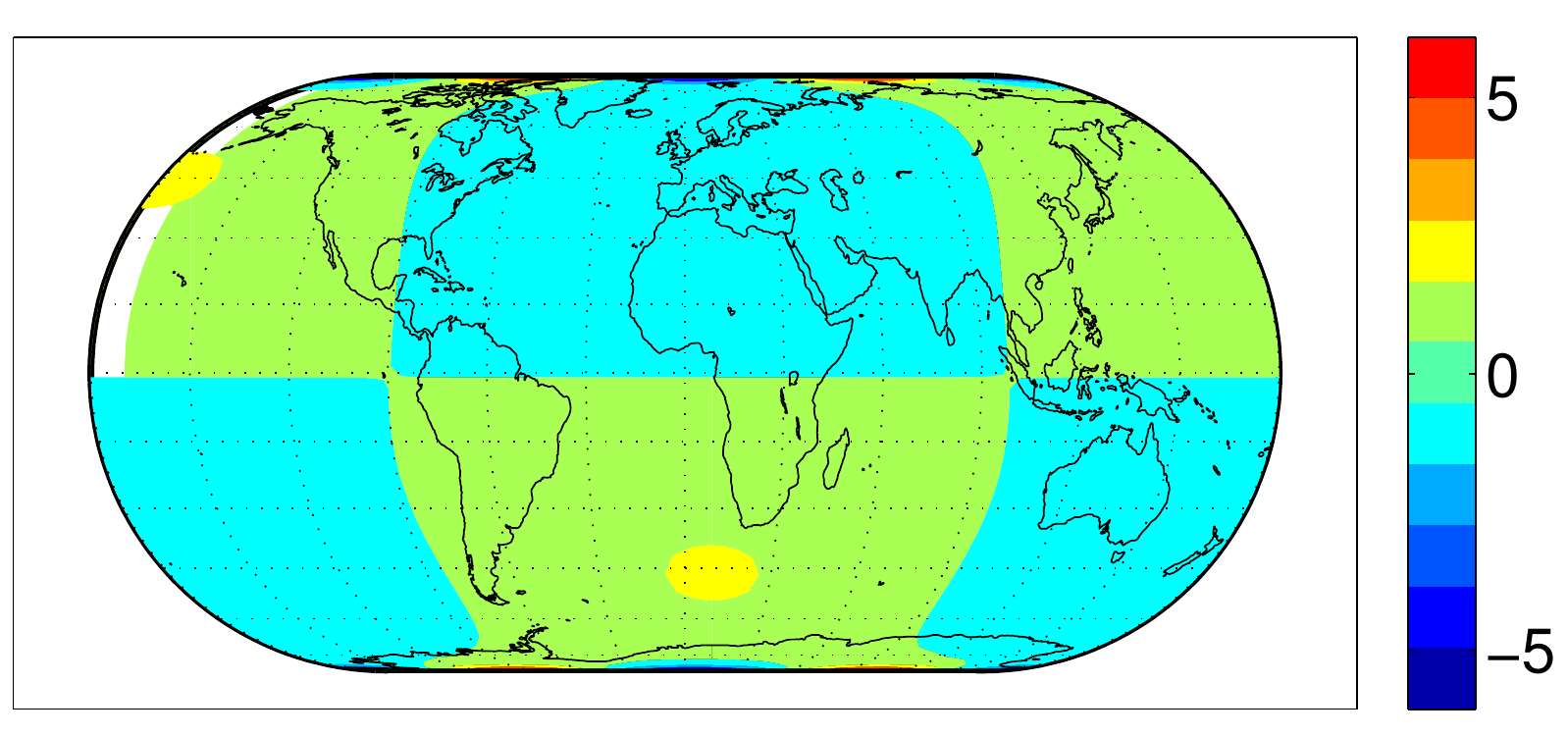}}&
\subfigure[Contributions of model errors: Height]{\includegraphics[width=0.45\textwidth]{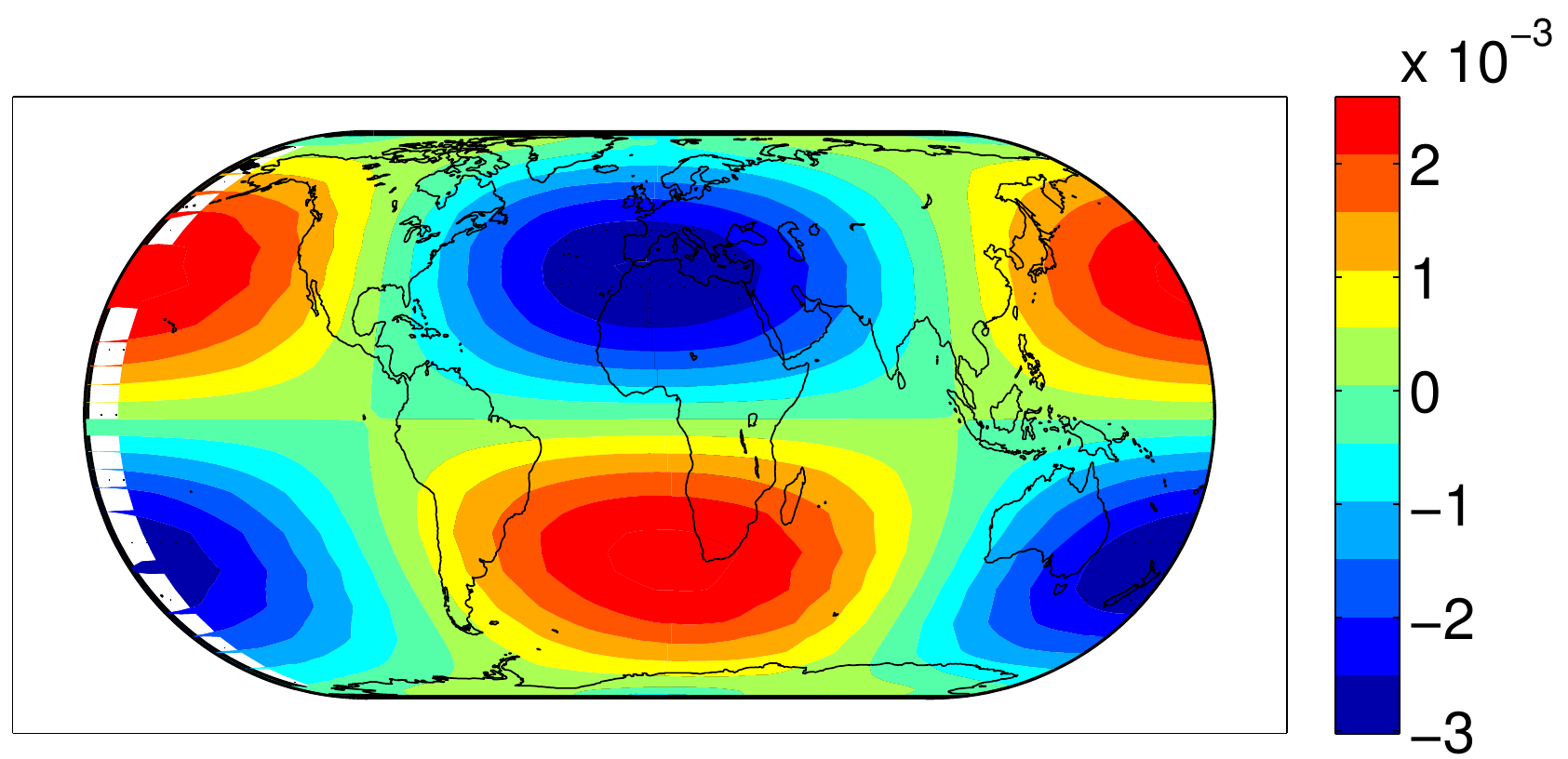}}
\end{tabular}
\end{center}
\caption{The figures on the left show samples of model errors for different variables at different grid points for the shallow water model \eqref{eqn:swe} at time t= 3600s. The figures on the right show the corresponding model error contributions at different grid points to the error functional \eqref{eqn:errfunc} for hourly observations measured over a period of 24 hours. The plot indicates the sum of the model error impact over all the observation instances.}
\label{fig:ModelErrContr:swe}
\end{figure}
%

\subsection{Validation of a posteriori error estimates in probabilistic setting}\label{sec:statValEsts}

The statistics of the a posteriori error estimate \eqref{eqn:discimpactFdvar-noise} are validated by comparing them against the mean and variance of the \qoi for an ensemble of runs (\textit{ensemble mean and variance}). 

The validation procedure is as follows:
\begin{enumerate}
 \item Generate $N_{\rm ens}$ realizations of data errors taken from a Gaussian distribution $\Delta \y_k \sim \mathcal{N}(0,\mathbf{R}_k)$.
This distribution is consistent with \eqref{eq:data-error-statistics} for $\rho_k=0$. 
 \item Generate $N_{\rm ens}$ realizations of model errors. The procedure to obtain different realizations of model error is described in Section \ref{sec:modelerrors}. 
 \item  Solve $N_{\rm ens}$ different 4D-Var optimization problems \eqref{eqn:4dvar-opt-p} to obtain solutions 
  $(\widehat{\x}^{\rm a}_0)_e$, $e=1,\dots, N_{\rm ens}$. Each 4D-Var problem uses a different realization of model error and a different realization of
  the synthetic data (reference values plus the realization of data errors).
 \item Obtain an ensemble of errors in the \qoi $(\Delta \mathcal{E}^{\rm ens})_e=\mathcal{E}((\widehat{\x}^{\rm a}_0)_e)-\mathcal{E}(\x^{\rm a}_0)$, $e=1,\dots, N_{\rm ens}$.
 \item The ensemble mean of error impact is computed by:
 \begin{subequations}\label{eqn:dataImpactDetStats}
 \begin{equation}\label{eqn:dataImpactMeanDet}
  \texttt{E}\lbrack\Delta \mathcal{E}^{\rm ens}\rbrack =
   \frac{1}{N_{\rm ens}}\displaystyle \sum_{i=e}^{N_{\rm ens}}\left( \Delta \mathcal{E}^{\rm ens} \right)_e \,,
 \end{equation}
 and the ensemble variance  of error impact is computed by:
 \begin{equation}
 \label{eqn:dataImpactVarDet}
   \texttt{var}\lbrack\Delta \mathcal{E}^{\rm ens}\rbrack = \frac{1}{N_{\rm ens}-1}\displaystyle \sum_{e=1}^{N_{\rm ens}}\left( \left(\Delta \mathcal{E}^{\rm ens}\right)_e - \texttt{E}\lbrack\Delta \mathcal{E}^{\rm ens}\rbrack \right)^2 \,.
 \end{equation}
 \end{subequations}
\item Compare the variational estimates \eqref{eqn:discimpactFdvar-noise} of means and variances of the impact of data and model errors on the optimal solution
against the ensemble estimates \eqref{eqn:dataImpactDetStats}.

\end{enumerate}
%
%
\begin{table}[ht]
 \centering
 \begin{tabular}{|c|c|c|c|c|}
  \hline
  &$\mathtt{E}\lbrack\Delta\mathcal{E}_{\rm obs}\rbrack$&$\texttt{var} \left(\Delta\mathcal{E}_{\rm obs}\right)$ &$\mathtt{E}\lbrack\Delta\mathcal{E}_{\rm mod}\rbrack$ & $\texttt{var} \left(\Delta\mathcal{E}_{\rm mod}\right)$\\ \hline
  Variational estimates \eqref{eqn:discimpactFdvar-noise} &  0.00  &  2.87 & 1.21 & 0.053\\ \hline
  Ensemble estimates \eqref{eqn:dataImpactDetStats} &  0.105 & 2.53 & 1.17 & 0.080 \\ \hline
 \end{tabular}
 \caption{Comparison between ensemble mean and variances of the impact of model and data errors on the 4D-Var optimal solution with the shallow water model \eqref{eqn:swe}.}
 \label{tab:validationMV}
\end{table}

Table \ref{tab:validationMV} shows the results for the shallow water equation. Two sets of experiments are performed. In the first set we consider data errors, but no model errors. In the second we consider model errors, but no data errors. This allows to validate separately the impact of data and the impact of model errors. In each case we use ensembles of $N_{\rm ens} = 15$ members. The variational estimates are fairly close to the ensemble means and variances.
\section{Conclusions and future work}\label{sec:conc}
Practical inverse problems use imperfect models and noisy data. This work considers variational inverse problems
with time dependent models such as those arising from the discretization of evolutionary PDEs. 
An a posteriori error estimation methodology is developed to quantify the impact of  model and data errors on the inference result.
The approach considers a scalar quantity of interest that depends on the inference result, and which is formalized as an error functional. 
The errors in the quantity of interest due to errors in the model and data are estimated to first order using an algorithm that involves
tangent linear, first, and second order adjoint models. 
We consider generic continuous-time and discrete-time models, and generic cost functionals for the inverse problem. We also derive estimations
in the particular case of 4D-Var data assimilation.

We illustrate the proposed approach using a 4D-Var data assimilation tests with a one dimensional heat equation and with the shallow water model on a sphere.
The error estimates are very close to the actual errors in the quantity of interest due to both the data as well as the model inaccuracies. The statistics (mean and variance) of the estimates are cross-validated using an ensemble of estimates.

The proposed methodology can prove useful in a general context to quantify and reduce uncertainties in a real-time system with feedback. The error estimates can be used to locate faulty sensors. Moreover, the areas of maximum sensitivity  highlighted via the error estimates indicate the locations where greater accuracy in measurements is required (adaptive observations), or where it is beneficial to increase the model resolution (adaptive modeling). 
In future work we plan to apply this methodology to estimate errors in real scenarios using models like the Weather Research and Forecast Model (WRF).

\clearpage
\section*{Acknowledgements}
This work was supported by AFOSR DDDAS program through the award AFOSR FA9550--12--1--0293--DEF managed by Dr. Frederica Darema.
\bibliographystyle{siam}
\bibliography{fdvar_errors_Journal}

 \begin{appendices}
 \section{Derivation of first order optimality conditions for continuous-time models}\label{app:kkt}
  The Lagrangian function associated with the cost function in \eqref{eqn:cf} and the constraint in \eqref{eqn:ode} is
\begin{equation}\label{eqn:LagApp}
 \mathcal{L} = \displaystyle\int\limits_{t_0}^{t_F}\,r\left(\x(t),\theta\right)\, \mathrm{d}t + w\left(\x(t_F),\theta\right) - \displaystyle\int\limits_{t_0}^{t_F}\,\lambda^{\rm T}(t)\cdot\left(\x' - f(t,\x,\theta)\right)\,\mathrm{d}t
\end{equation}
Taking variations of \eqref{eqn:LagApp} we obtain:                        %
\begin{equation*}\label{eqn:var-LApp}                  %
 \begin{split}
 \delta \mathcal{L} & = \displaystyle\int\limits_{t_0}^{t_F}\,\left(\left(r^{\rm T}_{\theta}\left(\x(t),\theta \right) + f_{\theta}^{\rm T} \left(t,\x,\theta \right) \cdot \lambda \right)^{\rm T} \cdot \delta \theta   \right) \mathrm{d}t \\
 & + \displaystyle\int\limits_{t_0}^{t_F}\, \left(\left(r^{\rm T}_{\x}\left(\x(t),\theta \right) + f_{\x}^{\rm T} \left(t,\x,\theta \right) \cdot \lambda \right)^{\rm T} \cdot \delta \x \,\right)\mathrm{d}t \\
 & - \displaystyle\int\limits_{t_0}^{t_F}\, \delta \lambda^{\rm T} \cdot \left( \x' - f(t,\x,\theta) \right) \mathrm{d}t - \displaystyle\int\limits_{t_0}^{t_F}\, \lambda^{\rm T} \cdot \left(\delta\x' \right) \mathrm{d}t\\
 &+ w_\x\left(\x\left( t_F \right), \theta \right) \cdot \delta \x\left(t_F \right) + w_\theta\left(\x\left( t_F \right), \theta \right) \cdot \delta \theta
 \end{split}                                      %
 \end{equation*}          
 Further, by performing integration by parts we obtain:
\begin{equation} \label{eqn:int-parts1App}
\begin{split}
- \displaystyle\int\limits_{t_0}^{t_F}\, \lambda^{\rm T} \cdot \left(\delta\x' \right) \mathrm{d}t = & -\lambda^{\rm T} \left(t \right) \cdot \delta \x \left( t \right) \bigg|_{t_0}^{t_F} + \displaystyle\int\limits_{t_0}^{t_F}\, \left(\lambda' \right)^{\rm T} \cdot \delta \x \left(t \right)\, \mathrm{d}t \\
=& -\lambda^{\rm T} \left(t_F\right)\cdot \delta \x \left(t_F \right) + \lambda^{\rm T} \left(t_0\right)\cdot \delta \x \left(t_0 \right)\\
&+ \displaystyle\int\limits_{t_0}^{t_F}\, \left(\lambda' \right)^{\rm T} \cdot \delta \x \left(t \right)\, \mathrm{d}t \\
=& -\lambda^{\rm T} \left(t_F\right)\cdot \delta \x \left(t_F \right) + \lambda^{\rm T} \left(t_0\right)\cdot \left( \x_\theta(t_0)\cdot \delta \theta\right)\\
& + \displaystyle\int\limits_{t_0}^{t_F}\, \left(\lambda' \right)^{\rm T} \cdot \delta \x \left(t \right)\, \mathrm{d}t
\end{split}
\end{equation}
The KKT conditions or the first order optimality conditions are obtained by setting $\mathcal{L}_\lambda, \mathcal{L}_\theta, \text{ and } \mathcal{L}_\x = \mathbf{0} $.\\
 Setting $\langle\mathcal{L}_\x, \delta \x \rangle = 0$,   $\left(\text{where, }\langle.\rangle\text{ denotes the inner product.}\right)$ gives us the following adjoint ODE :
\begin{subequations}
\label{eqn:kktApp}
\begin{equation}\label{eqn:adjointApp}
\lambda' = -  r_\x^{\rm T}\left(\x(t), \theta \right) - f_\x^{\rm T} \left(t,\x,\theta \right) \cdot \lambda, \qquad \lambda\left( t_F \right) = w_{\x}^{\rm T} \left( \x \left(t_F\right), \theta \right)\,.
\end{equation}
 Setting $\left \langle \mathcal{L}_\lambda, \delta \lambda \right \rangle =0 $, we obtain the constraint ODE
\begin{equation}\label{eqn:odekktApp}
 -\x' + f(t,\x,\theta) = 0, \qquad \x(t_0) = \x_0\,.
\end{equation}
 Setting $\left \langle \mathcal{L}_\theta, \delta \theta \right \rangle =0 $, we obtain the following optimality condition:
\begin{equation}\label{eqn:optApp}
 \xi\left(t_0 \right) + \x_\theta^{\rm T}(t_0)\cdot \lambda (t_0) = 0\,.
\end{equation}
 \end{subequations}
%
%
 The value of $\xi\left(t_0\right)$ can be obtained by solving the following ODE:
%
\begin{equation}\label{eqn:quad1App}
\begin{split}
 \xi' = -\left(r_\theta^{\rm T} \left(\x(t),\theta \right) + f_{\theta}^{\rm T} (t,\x,\theta)\cdot \lambda\right), \\
 t_F \leq t \leq t_0, \qquad \xi\left(t_F\right) = w^{\rm T}_{\theta}\left(\x\left(t_F \right), \theta \right)\,.
 \end{split}
\end{equation}
The group of equations in \eqref{eqn:kktApp} represent the first order optimality conditions and are same as the group of equations elaborated in \eqref{eqn:kkt}.
 \section{Derivation of super-Lagrange parameters}\label{app:supLag}
The Lagrangian associated with the error functional of the form \eqref{eqn:error_functionalCont} and the constraints posed by the first order optimality conditions \eqref{eqn:kkt} is:
\begin{eqnarray}\label{eqn:supLagApp}
 \mathcal{L^E} &= & \,\mathcal{E}\left(\theta^{\rm a}\right) - \displaystyle\int\limits_{t_F}^{t_0}\,\mu^{\rm T} \cdot \left(\lambda'+r_{\x}^{\rm T}  +  f_{\x}^{\rm T} \cdot \lambda \right) \, \mathrm{d}t \\
\nonumber
&& - \mu^{\rm T}\left( t_F \right)  \cdot \left( \lambda\left( t_F \right) - w_{\x}^{\rm T} \left( \x\left(t_F \right), \theta \right) \right)\\
\nonumber
&& - \displaystyle\int\limits_{t_F}^{t_0}\,\zeta^{\rm T} \cdot \left(\xi' + r_\theta^{\rm T} + f_{\theta}^{\rm T} \cdot \lambda \right) \, \mathrm{d}t \\
\nonumber
&&-\zeta^{\rm T} \cdot \left(\xi\left(t_F \right) - w^{\rm T}_{\theta} \left( \x \left(t_F\right), \theta \right)  \right) \\
\nonumber
&&-\zeta^{\rm T} \cdot \left(\xi\left(t_0 \right) + \x_\theta^{\rm T}(t_0)\cdot \lambda (t_0) \right) \\
\nonumber
&&  -\displaystyle\int\limits_{t_0}^{t_F}\,\nu^{\rm T} \cdot \left(\x' - f \right)\,\mathrm{d}t    
 - \nu^{\rm T}\left(t_0 \right) \cdot \left( \x(t_0) - \x_0\right)\,. \nonumber
\end{eqnarray}
Taking the variations of \eqref{eqn:supLagApp} we obtain:
\begin{eqnarray*}
\left \langle \mathcal{L^E_\lambda}, \delta \lambda \right \rangle =  \left \langle \mathcal{E}_{\lambda}, \delta \lambda \right \rangle &=&\displaystyle\int\limits_{t_F}^{t_0}\, \left(-\left( \mu'\right)^{\rm T} \cdot \delta \lambda + \mu^{\rm T} \cdot \left(f_{\x}^{\rm T} \cdot \delta \lambda\right) + \zeta^{\rm T} \cdot \left(f_{\theta}^{\rm T}  \cdot \delta \lambda \right) \right)\mathrm{d}t\\
 &&+ \left.\mu^{\rm T}\cdot \delta \lambda  \right|_{t_F}^{t_0} + \mu^{\rm T}\left( t_F \right)  \cdot \delta\lambda\left( t_F \right) +\zeta^{\rm T} \cdot  \x_\theta^{\rm T}(t_0)\cdot \delta\lambda (t_0) 
\end{eqnarray*}
Imposing the stationary condition $\nabla_{\lambda}\mathcal{L^E} = 0$ leads to the following {\it tangent linear model} (TLM):
\begin{eqnarray}\label{eqn:tlmsupApp}
&& -  \mu' + f_{\x} \cdot \mu   + f_{\theta}  \cdot \zeta = 0, \quad t_0 \le t \le t_F; \\
\nonumber
&& \mu\left( t_0 \right) =   -\x_\theta(t_0)\cdot \zeta\,.
\end{eqnarray}
\begin{eqnarray*}
\left \langle \mathcal{L^E_\x}, \delta \x \right \rangle =  \left \langle \mathcal{E}_{\x}, \delta \x \right \rangle & = &\displaystyle\int\limits_{t_0}^{t_F}\,\left(-\left(\nu'\right)^{\rm T} - \nu^{\rm T} \cdot f_\x -  \left(r_{\x,\x} \cdot \mu \right)^{\rm T}  - \left(\left(f_{\x,\x} \cdot \mu \right)^{\rm T}\cdot \lambda\right)^{\rm T} \right) \cdot \delta \x \, \mathrm{d}t \\
&& + \displaystyle\int\limits_{t_0}^{t_F}\, \left( -\left(  r_{\theta,\x}^{\rm T}  \cdot\zeta  \right)^{\rm T}- \left(\left( f_{\theta,\x}^{\rm T} \cdot \zeta \right)^{\rm T}\cdot \lambda\right)^{\rm T} \right) \cdot \delta \x \, \mathrm{d}t \\
&& + \nu^{\rm T} \cdot \delta \x \big|_{t_0}^{t_F} + \nu^{\rm T}(t_0) \cdot \left(\delta \x \left( t_0 \right) - \delta \x_0\right) \\
&&+ \zeta^{\rm T}  \cdot \left(-w^{\rm T}_{\theta,\x} \left(\x\left(t_F\right), \theta\right) \cdot \delta \x\left(t_F\right) \right)\\
&&-\mu\left(t_F\right)^{\rm T} \cdot \left(w_{\x,\x} \left(\x\left(t_F\right),\theta\right)\right) \cdot \delta \x\left(t_F \right) \,.
\end{eqnarray*}
The stationarity condition $\nabla_{\x}\mathcal{L^E} = 0$ leads to the following {\it second order adjoint} ODE (SOA)
\begin{eqnarray}\label{eqn:soasupApp}
 && \left(\nu'\right) +   f_{\x}^{\rm T} \cdot \nu +  r_{\x,\x} \cdot \mu  + \left(f_{\x,\x} \cdot \mu \right)^{\rm T} \cdot \lambda  \\ 
 \nonumber
&& \qquad +  r_{\theta,\x}^{\rm T} \cdot \zeta +  \left(f_{\theta,\x}^{\rm T} \cdot \zeta \right)^{\rm T} \cdot \lambda   =  0, \qquad t_F \ge t \ge t_0; \\
\nonumber
&& \nu \left(t_F \right) = w_{\theta,\x} \left(\x\left(t_F \right), \theta\right) \cdot \zeta  +  w_{\x,\x} \left(\x\left(t_F \right), \theta\right) \cdot \mu \left(t_F \right)\,.
\end{eqnarray}
We group the remaining terms to obtain
\begin{eqnarray}\label{eqn:ethetaApp} 
  \left \langle \mathcal{E}_{\theta}, \delta \theta \right \rangle &= &\displaystyle\int\limits_{t_0}^{t_F}\,-\left(\mu^{\rm T} \cdot \left(r_{\x,\theta}^{\rm T}+f_{\x,\theta} \cdot \lambda \right)   + \zeta^{\rm T} \cdot \left(r_{\theta,\theta}^{\rm T}  + \left( f_{\theta,\theta} \cdot \lambda\right)^{\rm T}  \right) \right. \\
  \nonumber
  && \qquad \bigl. + \nu^{\rm T} \cdot f_\theta\bigr)\cdot \delta \theta \, \mathrm{d}t \\ \nonumber
  && - \mu^{\rm T} \left(t_F \right)\cdot \left(w_{\theta,\x}\cdot \delta \theta \left(t_F \right)\right) - \zeta^{\rm T}\cdot\left(w_{\theta,\theta}\cdot \delta \theta \left(t_F \right) \right) \\ \nonumber
  &&- \zeta^{\rm T} \cdot\left(\left(\x_{\theta,\theta}\left(t_0\right) \cdot \delta \theta \right)^{\rm T} \cdot \lambda (t_0) \right) - \nu\left(t_0\right)^{\rm T} \cdot \left(\left(\x_0\right)_{\theta} \delta \theta\right)\,.
\end{eqnarray}
Let us take the variation of the first order adjoint equation in equation \eqref{eqn:adjointApp} in the direction of $\delta \theta$, we obtain (we denote $\displaystyle \sigma(t) = \frac{d \lambda(t)}{d \theta} \cdot \delta \theta$)
\begin{eqnarray}\label{eqn:AppInterSOA}
 \sigma'  &=& - f_{\x}^T \cdot \sigma -\left(f_{\x,\x}\cdot \delta \x\right)^T \cdot \lambda - \left(f_{\x,\theta}\cdot\delta \theta\right)^T \cdot \lambda \\
 \nonumber && -r_{\x,\x} \cdot \delta \x -r^{\rm T}_{\theta, \x} \cdot \delta \theta\,, \quad t_F\geq t \geq t_0\,, \\
 \nonumber \sigma\left(t_F\right) &=& \left.w_{\x,\x}\cdot  \delta \x \right\vert_{t_F} + \left.w_{\theta,\x}\cdot\delta \theta\right\vert_{t_F}\,.
\end{eqnarray}
The gradient of the cost function with respect to $\theta$ is given by
\begin{eqnarray} \label{eqn:appGradInter}
 \nabla_{\theta} \J &= &w_{\theta}^T + \x_{\theta}^T \cdot \lambda\left(t_0\right)\\
 \nonumber &&+ \displaystyle \int_{t_0}^{t_F} \,  \left(f_{\theta}\cdot \lambda + r_{\theta}^T \right) \, \mathrm{d}t\,.
\end{eqnarray}
Now taking the derivative of the gradient in the direction of $\delta \theta$,  we have in the direction of $\delta \theta$, the following Hessian-vector product:
\begin{equation}\label{eqn:hvprodApp}
 \begin{split}
    \nabla^2_{\theta, \theta} \J \cdot \delta \theta & = w_{\x, \theta} \left(\x(t_F), \theta \right) \cdot \delta \x \left(t_F\right) + w_{\theta,\theta} \left(\x\left(t_F \right), \theta \right) \cdot \delta \theta \\
    & + \left(\frac{d\x_0}{d\theta}\right)^T\cdot \sigma(t_0) + \left( \frac{d^2\x_0}{d\theta^2} \delta \theta \right)\cdot \lambda(t_0)\\
    &+ \displaystyle \int \limits_{t_0}^{t_F}\, \left(f_\theta^T \cdot \sigma + \left(f_{\theta, \x} \cdot \delta \x\right)^T \cdot \lambda + \left(f_{\theta,\theta}  \cdot \delta \theta \right)^T \cdot \lambda \right) \, \mathrm{d}t \\
    &+\displaystyle \int \limits_{t_0}^{t_F}\,\left(r_{\x,\theta} \cdot \delta \x + r_{\theta,\theta} \cdot \delta \theta \right) \mathrm{d}t\,.
 \end{split}
\end{equation}
Comparing \eqref{eqn:AppInterSOA} and \eqref{eqn:soasupApp} we see the following relationsips
\begin{eqnarray} \label{eqn:SettingDeltas}
 \delta \theta &=& \zeta \\
 \nonumber \delta \x &=& \mu \\
 \nonumber \sigma & = \nu 
\end{eqnarray}
Substituting \eqref{eqn:SettingDeltas} in \eqref{eqn:ethetaApp} we obtain \eqref{jetheta}.
 \section{Derivation of first order optimality conditions for discrete-time models}\label{app:kktDisc}
The Lagrangian function associated with the cost function in \eqref{eqn:cfd} and the constraints in \eqref{eqn:model} is 
\begin{eqnarray}
 \label{eqn:LagDiscApp}
 \mathcal{L} &=& \displaystyle \sum_{k=0}^{N-1}\, \left(r_k\left(\x_k,\theta\right) - \lambda^{\rm T}_{k+1}  \cdot \left(\x _{k+1} - \Model_{k,k+1}(\x_k, \theta)\right)\right) + r_N\left(\x_N, \theta\right)\, \\
 \nonumber&& - \lambda_0^{\rm T} \cdot \left(\x_0 - \x_0 \left(\theta\right)\right)
\end{eqnarray}
Taking the variations we get
\begin{eqnarray*} 
 \delta \mathcal{L}&=& \displaystyle \sum_{k=0}^{N}\, \left(r_k\left(\x_k, \theta \right) \right)_{\x_k} \cdot\delta \x_k + \left(r_k\left(\x_k, \theta \right) \right)_{\theta} \cdot\delta \theta  \\ 
 &&- \displaystyle \sum_{k=0}^{N-1}\,\lambda_{k+1}^{\rm T} \cdot \left(\delta \x_{k+1} - \M_{k,k+1} \delta \x_k - \mathfrak{M}_{k,k+1} \delta \theta\right) \\
 && - \displaystyle \sum_{k=0}^{N-1}\,\delta \lambda^{\rm T}_{k+1}  \cdot \left( \x _{k+1} - \Model_{k,k+1}\left(\x_k,\theta\right)\right)\\
 &&- \lambda_0^{\rm T} \cdot \left(\delta \x_0 - \left(\x_{0}\right)_{\theta} \delta \theta\right) - \delta\lambda_0^{\rm T} \cdot \left(\x_0 - \x_0 \left(\theta\right)\right)  \,.
\end{eqnarray*}
Setting the independent variations with respect to $\delta \theta,\, \delta \x_k \,, \text{and } \delta \lambda_k = 0$ we get 
\begin{subequations}\label{eqn:kktDiscApp}
\begin{eqnarray} 
 \lambda_N &=& \left(r_N\left(\x_N, \theta \right)\right)^{\rm T}_{\x_N}\,, \\
 0 &=&\lambda_{k} - \M_{k,k+1}^{\rm T} \lambda_{k+1} - \left(r_k\left(\x_k, \theta\right)\right)^{\rm T}_{\x_k}\,, k =N-1, \dots, 0\,, \label{eqn:adjointDiscApp} \\
 0&=& \x_{k+1} - \Model_{k,k+1}\left(\x_k, \theta\right)\,, k=0,\dots, N-1\,, \label{eqn:forwardDiscApp} \\
 0&=& \sum_{k=0}^N \, \left(r_k\left(\x_k, \theta \right)\right)^{\rm T}_{\theta} + \left(\x_{0}\right)_{\theta}^{\rm T}\cdot \lambda_0 + \displaystyle \sum_{k=0}^{N-1} \mathfrak{M}^{\rm T}_{k,k+1} \lambda_{k+1} \,.\label{eqn:optDiscApp}
\end{eqnarray}
\end{subequations}
The set of equations in \eqref{eqn:kktDiscApp} represent the first order optimality conditions for the inverse problem \eqref{eqn:ipdiscrete} with discrete time models.
\section{Finite dimensional methodology}\label{App:fdm}

\subsection{The exact inverse problem}

Consider the exact (``reference'') inverse problem
\begin{equation}\label{eqn:ipFiniteApp}
 \begin{aligned}
 \theta^{\rm a}  =  &\underset{\theta} {\text{arg\, min}}\,
&& \J\left(\x, \theta\right) \\
  & \text{subject to}
  & & \cbf\left(\x,\theta\right) = 0\,. \\
 \end{aligned}
\end{equation}
The Lagrangian is given by
\begin{equation} \label{eqn:lagFiniteApp}
 \mathcal{L} = \J - \lambda^{\rm T} \cdot \cbf \,.
\end{equation}
The KKT conditions for equation \eqref{eqn:lagFiniteApp} is given by
\begin{subequations} 
\label{eqn:kktFiniteApp}
 \begin{eqnarray}
\label{kkt:fwdFiniteApp}
\textnormal{forward model:}~~  0 &=& \cbf\left(\x, \theta\right) \,, \\
\label{kkt:adjFiniteApp}
\textnormal{adjoint model:}~~ 0 &=& \J_{\x} - \lambda^{\rm T} \cdot \cbf_{\x}\,, \\
\label{kkt:optFiniteApp}
\textnormal{optimality:} ~~0 &=& \J_{\theta} - \lambda^{\rm T} \cdot \cbf_{\theta}\,.
\end{eqnarray}
\end{subequations}
It should be noted that the gradient of $\mathcal{L}$ with respect to $\theta$ is given by
\begin{equation}
 \nabla_{\theta} \mathcal{L} = \J_{\theta} - \lambda^{\rm T} \cdot \cbf_{\theta}\,.
\end{equation}
We seek to minimize the function $\mathcal{E}\left(\theta\right)$ with the KKT conditions in equation \eqref{eqn:kktFiniteApp} as the constraints. Hence we consider the following super-Lagrangian
\begin{eqnarray} \label{eqn:supLagFiniteApp}
\mathcal{L^E} = \mathcal{E} - \nu^{\rm T} \cdot \cbf  - \left(\J_{\x} - \lambda^{\rm T} \cdot \cbf_{\x} \right) \mu - \left(\J_{\theta} - \lambda^{\rm T} \cdot \cbf_{\theta} \right) \zeta \,.
\end{eqnarray}
Taking the derivative of $\mathcal{L^E}$ with respect to $\x, \lambda, \textnormal{ and } \theta$ we obtain the following:
\begin{subequations}
\begin{align}
 \label{kkt:soaFiniteApp}
 \left(\mathcal{L^E}\right)_{\x} &= \mathcal{E}_{\x} - \nu^{\rm T} \cdot \cbf_{\x} - \mu^{\rm T}\left(\J_{\x,\x} - \lambda^{\rm T} \cdot \cbf_{\x,\x} \right) \\
 &\nonumber -\zeta^{\rm T} \cdot \left(\J_{\theta, \x} - \lambda^{\rm T} \cdot \cbf_{\theta, \x} \right)\,, \\
 \label{kkt:TLMFiniteApp}
 \left(\mathcal{L^E}\right)_{\lambda} &= \mathcal{E}_{\lambda} + \mu^{\rm T} \cdot \cbf_{\x}^{\rm T} + \zeta^{\rm T} \cdot \cbf_{\theta}^{\rm T} \,,\\
 \label{kkt:OPTFiniteApp}
 \left(\mathcal{L^E}\right)_{\theta} & = \mathcal{E}_{\theta} - \nu^{\rm T} \cdot \cbf_{\theta} - \mu^{\rm T} \cdot \left( \J_{\x, \theta} - \lambda^{\rm T} \cdot \cbf_{\x, \theta} \right)  \\
 \nonumber & - \zeta^{\rm T} \cdot \left(\J_{\theta, \theta} - \lambda^{\rm T}  \cdot \cbf_{\theta, \theta} \right)\,.
\end{align}
\end{subequations}
Setting $\left(\mathcal{L^E}\right)_{\lambda} = 0$, we obtain ($\mathcal{E}_{\lambda} = 0$)
\begin{equation}\label{eqn:muZetaFiniteApp}
 \mu^{\rm T} = -\zeta^{\rm T} \cdot \left(\cbf_{\theta}^{\rm T} \cbf_{\x}^{-\rm T}\right)\,.
\end{equation}
From equations \eqref{kkt:soaFiniteApp} and \eqref{eqn:muZetaFiniteApp} and setting $\left(\mathcal{L^E}\right)_{\x}  = 0$, we obtain ($\mathcal{E}_{\x} = 0$)
\begin{eqnarray}\label{eqn:nuCThetaFiniteApp}
 \nu^{\rm T} \cdot \cbf_{\theta}&=& \zeta^{\rm T} \left(\cbf_{\theta} \cbf_{\x}^{-1} \left(\J_{\x,\x} - \lambda^{\rm T} \cdot \cbf_{\x,\x}\right)  - \left( \J_{\theta,\x}   -\lambda^{\rm T} \cdot \cbf_{\theta,\x}\right)\right)\cbf_{\x}^{-1} \cbf_{\theta}.
 \end{eqnarray}
Substituting equations \eqref{eqn:nuCThetaFiniteApp} and \eqref{eqn:muZetaFiniteApp} in \eqref{kkt:OPTFiniteApp} we obtain
\begin{eqnarray}\label{eqn:ethetaFiniteApp}
 \mathcal{E}_{\theta} &=& \zeta^{\rm T} \left(\cbf^{\rm T}_{\theta} \cbf_{\x}^{-\rm T} \left(\J_{\x,\x} - \lambda^{\rm T} \cdot \cbf_{\x,\x}\right)  - \left( \J_{\theta,\x}   -\lambda^{\rm T} \cdot \cbf_{\theta,\x}\right)\right)\cbf_{\x}^{-1} \cbf_{\theta} \\
 \nonumber&&- \zeta^{\rm T} \left( \cbf^{\rm T}_{\theta} \cbf_{\x}^{-\rm T} \left( \J_{\x, \theta} - \lambda^{\rm T} \cdot \cbf_{\x, \theta} \right) \right) + \zeta^{\rm T} \cdot \left(\J_{\theta, \theta} - \lambda^{\rm T}  \cdot \cbf_{\theta, \theta} \right)\,.
\end{eqnarray}
Consider the Lagrangian of the reduced cost function 
\begin{eqnarray}
\label{eqn:lagRedFiniteApp}
 \ell(\theta) =  \J\left(\x\left(\theta\right), \theta \right) - \lambda(\theta)^{\rm T} \cdot \cbf\left(\x\left(\theta\right), \theta \right)\,.
\end{eqnarray}
The reduced gradient reads
\begin{eqnarray}
\label{eqn:lagRedGradient}
 \ell_\theta^T &=&  \J_\theta^T - \cbf_\theta^T\, \lambda + \x_\theta^T\, \left( \J_\x^T -\cbf_\x^T\, \lambda \right) - \left( \lambda_\theta^{\rm T} + \x_\theta^T\, \lambda_\x^T \right)\, \cbf\,.
\end{eqnarray}
The reduced Hessian reads
\begin{eqnarray}
\label{eqn:lagRedHessianComplete}
 \ell_{\theta,\theta} &=&  \J_{\theta,\theta} - \lambda^{\rm T} \, \cbf_{\theta,\theta} + \left( \J_{\theta,\x} - \lambda^{\rm T} \, \cbf_{\theta,\x} \right)\, \x_\theta -  \cbf_\theta^T\, \left( \lambda_\theta +  \lambda_\x\, \x_\theta \right)\, \\
 \nonumber
 && +  \x_\theta^T\, \left( \J_{\x,\theta} - \lambda^{\rm T} \, \cbf_{\x,\theta} \right) \, + \x_\theta^T\,  \left( \J_{\x,\x} - \lambda^{\rm T} \, \cbf_{\x,\x} \right) \, \x_\theta\\
 \nonumber
&& - \x_\theta^T\, \cbf_\x^T\,\left( \lambda_\theta +  \lambda_\x\, \x_\theta \right) \\
 \nonumber
 &&  - \left( \lambda_\theta^{\rm T} + \x_\theta^T\, \lambda_\x^T \right)\, \left(\cbf_\theta + \cbf_\x\, \x_\theta \right)
        - \frac{d}{d\theta}\left( \lambda_\theta^{\rm T} + \x_\theta^T\, \lambda_\x^T \right)\, \cbf \\
\nonumber        
         &=&  \J_{\theta,\theta} - \lambda^{\rm T} \, \cbf_{\theta,\theta} + \left( \J_{\theta,\x} - \lambda^{\rm T} \, \cbf_{\theta,\x} \right)\, \x_\theta \, \\
 \nonumber
 && +  \x_\theta^T\, \left( \J_{\x,\theta} - \lambda^{\rm T} \, \cbf_{\x,\theta} \right) \, + \x_\theta^T\,  \left( \J_{\x,\x} - \lambda^{\rm T} \, \cbf_{\x,\x} \right) \, \x_\theta\\
 \nonumber
&& - \left( \cbf_\theta^T + \x_\theta^T\, \cbf_\x^T\right)\,\left( \lambda_\theta +  \lambda_\x\, \x_\theta \right)\\
 \nonumber
 &&  - \left( \lambda_\theta^{\rm T} + \x_\theta^T\, \lambda_\x^T \right)\, \left(\cbf_\theta + \cbf_\x\, \x_\theta \right)
        - \frac{d}{d\theta}\left( \lambda_\theta^{\rm T} + \x_\theta^T\, \lambda_\x^T \right)\, \cbf
 \nonumber\,.
\end{eqnarray}
When the optimality conditions are satisfied we have that
\[
\cbf = 0\,, \quad \cbf_\theta + \cbf_\x\, \x_\theta = 0 ~~ \Rightarrow ~~ \x_\theta = - \cbf_\x^{-1}\, \cbf_\theta.
\]
Consequently the reduced Hessian \eqref{eqn:lagRedHessianComplete} evaluated at the optimal solution reads
\begin{eqnarray}
\label{eqn:lagRedHessian}
 \ell_{\theta,\theta}
&=&  \J_{\theta,\theta} - \lambda^{\rm T} \, \cbf_{\theta,\theta} + \left( \J_{\theta,\x} - \lambda^{\rm T} \, \cbf_{\theta,\x} \right)\, \x_\theta \, \\
 \nonumber
 && +  \x_\theta^T\, \left( \J_{\x,\theta} - \lambda^{\rm T} \, \cbf_{\x,\theta} \right) \, + \x_\theta^T\,  \left( \J_{\x,\x} - \lambda^{\rm T} \, \cbf_{\x,\x} \right) \, \x_\theta  \\
 \nonumber
 &=& \J_{\theta,\theta} - \lambda^{\rm T} \, \cbf_{\theta,\theta} - \left( \J_{\theta,\x} - \lambda^{\rm T} \, \cbf_{\theta,\x} \right)\, \cbf_\x^{-1}\, \cbf_\theta \, \\
 \nonumber
 && -  \cbf_\theta^T\, \cbf_\x^{-T}\, \left( \J_{\x,\theta} - \lambda^{\rm T} \, \cbf_{\x,\theta} \right) \, + \cbf_\theta^T\, \cbf_\x^{-T}\,  \left( \J_{\x,\x} - \lambda^{\rm T} \, \cbf_{\x,\x} \right) \, \cbf_\x^{-1}\, \cbf_\theta.
\end{eqnarray}

Equation \eqref{eqn:ethetaFiniteApp} can be written as the ``Hessian linear system''
\[
\ell_{\theta,\theta}\cdot \zeta = \mathcal{E}_\theta^T\,.
\]
Equation \eqref{eqn:muZetaFiniteApp} is the tangent linear model
\[
 \mu = -  \cbf_{\x}^{-\rm 1}\, \cbf_{\theta}\, \cdot \zeta \quad \Leftrightarrow \quad  \cbf_{\x} \cdot \mu = - \cbf_{\theta}\, \cdot \zeta.
\]
Finally from \eqref{kkt:soaFiniteApp} we have the second order adjoint model
\[
 \cbf_{\x}^T \, \nu = - \left(\J_{\x,\x} - \lambda^{\rm T} \cdot \cbf_{\x,\x} \right)^T\, \mu
 - \left(\J_{\theta, \x} - \lambda^{\rm T} \cdot \cbf_{\theta, \x} \right)^T\, \zeta
\]
or
\[
 \nu =  \cbf_{\x}^{-T}\, \left(\J_{\x,\x} - \lambda^{\rm T} \cdot \cbf_{\x,\x} \right)^T\, \cbf_{\x}^{-\rm 1}\, \cbf_{\theta}\, \cdot \zeta
 - \cbf_{\x}^{-T}\,\left(\J_{\theta, \x} - \lambda^{\rm T} \cdot \cbf_{\theta, \x} \right)^T\, \zeta. 
\]
Consider now the perturbed inverse problem
\begin{subequations} 
\label{eqn:kktFiniteApp_pg}
 \begin{eqnarray}
\label{kkt:fwdFiniteApp_pg}
\textnormal{perturbed forward model:}~~  \Delta\mathcal{F} &=& \cbf\left(\x, \theta\right) \,, \\
\label{kkt:adjFiniteApp_pg}
\textnormal{perturbed adjoint model:}~~ \Delta\mathcal{A} &=& \J_{\x} - \lambda^{\rm T} \cdot \cbf_{\x}\,, \\
\label{kkt:optFiniteApp_pg}
\textnormal{perturbed optimality:} ~~\Delta\mathcal{O} &=& \J_{\theta} - \lambda^{\rm T} \cdot \cbf_{\theta}\,.
\end{eqnarray}
\end{subequations}
where $\Delta\mathcal{F}$, $\Delta\mathcal{A}$, and $\Delta\mathcal{O}$ are the residuals in the forward, adjoint, and optimality conditions, respectively.
From \eqref{eqn:supLagFiniteApp} we have the following error estimate:
\begin{eqnarray*} 
\Delta \mathcal{E} &\approx& \nu^{\rm T} \cdot \Delta\mathcal{F} + \mu^T \cdot \Delta\mathcal{A} + \zeta^T\cdot \Delta\mathcal{O} \\
&=& \zeta^T \, \left( \cbf_{\theta}^T\, \cbf_{\x}^{-\rm T}\, \left(\J_{\x,\x} - \lambda^{\rm T} \cdot \cbf_{\x,\x} \right)  - \left(\J_{\theta, \x} - \lambda^{\rm T} \cdot \cbf_{\theta, \x} \right) \right)\, \cbf_{\x}^{-1} \, \Delta\mathcal{F} \\
&& + \zeta^T \, \left( -\cbf_{\theta}^T \,\cbf_{\x}^{-\rm T}\, \Delta\mathcal{A} + \Delta\mathcal{O} \right)
\end{eqnarray*}

\subsection{Perturbed finite dimensional inverse problem}
Consider the perturbed inverse problem
\begin{equation}\label{eqn:ipFinitePertApp}
 \begin{aligned}
 \widehat{\theta}^{\rm a}  =  &\underset{\theta} {\text{arg\, min}}\,
&& \J\left(\x, \theta\right) + \Delta \J\left(\x, \theta\right) \\
  & \text{subject to}
  & & \cbf\left(\x,\theta\right) + \Delta \cbf\left(\x, \theta\right) = 0\,. \\
 \end{aligned}
\end{equation}
The perturbed Lagrangian is given by
\begin{equation} \label{eqn:lagFinitePertApp}
 \widehat{\mathcal{L}} = \J + \Delta \J - \lambda^{\rm T} \cdot \left(\cbf + \Delta \cbf \right)  \,.
\end{equation}
For convenience we use the short notation
\[
\cbf := \cbf\left(\x, \theta \right) \,, \quad \widehat{\cbf} := \cbf\left(\widehat{\x}, \widehat{\theta} \right), \quad
\widehat{\x} = \x + \Delta\x, \quad \widehat{\theta} = \theta + \Delta\theta, \quad \widehat{\lambda} = \lambda + \Delta\lambda.
\]
The KKT conditions for equation \eqref{eqn:lagFinitePertApp} are
\begin{subequations} 
\label{eqn:kktFinitePertApp}
 \begin{eqnarray}
\label{kkt:fwdFinitePertApp}
\textnormal{forward model:}~~ & 0& = \widehat{\cbf} + \Delta \widehat{\cbf} \,, \\
\label{kkt:adjFinitePertApp}
\textnormal{adjoint model:}~~ &0& = \widehat{\J}_{\x} + \Delta \widehat{\J}_{\x} - \widehat{\lambda}^{\rm T} \cdot \left(\widehat{\cbf}_{\x} + \Delta \widehat{\cbf}_{\x} \right)\,, \\
\label{kkt:optFinitePertApp}
\textnormal{optimality:} ~~&0& =  \widehat{\J}_{\theta} + \Delta \widehat{\J}_{\theta} - \widehat{\lambda}^{\rm T} \cdot \left(\widehat{\cbf}_{\theta} + \Delta \widehat{\cbf}_{\theta} \right)\,.
\end{eqnarray}
\end{subequations}
%
%
Linearize \eqref{eqn:kktFinitePertApp} around the ideal optimal solution \eqref{eqn:kktFiniteApp}:
\begin{subequations} 
\label{eqn:kktFinitePertAppLin}
 \begin{eqnarray}
\label{kkt:fwdFinitePertAppLin}
0& =&  \cbf + \Delta \cbf + \left( \cbf + \Delta \cbf\right)_\x\, \Delta\x + \left( \cbf + \Delta \cbf\right)_\theta\, \Delta\theta  \,, \\
%
\label{kkt:adjFinitePertAppLin}
0& =&  (\J + \Delta\J)_{\x}  + (\J + \Delta\J)_{\x,\x}\, \Delta\x + (\J + \Delta\J)_{\x,\theta}\, \Delta\theta \\
\nonumber
&& - \Delta\lambda^{\rm T} \cdot \cbf_\x  - \lambda^{\rm T} \cdot \left( \cbf + \Delta\cbf\right)_{\x} \\
\nonumber
&& - \lambda^{\rm T} \cdot \left( \cbf + \Delta\cbf\right)_{\x,\x}\, \Delta\x   - \lambda^{\rm T} \cdot \left( \cbf + \Delta\cbf\right)_{\x,\theta}\, \Delta\theta \,, \\
\label{kkt:optFinitePertAppLin}
0& =&  (\J + \Delta\J)_{\theta}  + (\J + \Delta\J)_{\theta,\x}\, \Delta\x + (\J + \Delta\J)_{\theta,\theta}\, \Delta\theta \\
\nonumber
&& - \Delta\lambda^{\rm T} \cdot \cbf_\theta  - \lambda^{\rm T} \cdot \left( \cbf + \Delta\cbf\right)_{\theta} 
 - \lambda^{\rm T} \cdot \left( \cbf + \Delta\cbf\right)_{\theta,\x}\, \Delta\x  \\
\nonumber
&&  - \lambda^{\rm T} \cdot \left( \cbf + \Delta\cbf\right)_{\theta,\theta}\, \Delta\theta.
\end{eqnarray}
\end{subequations}
\par\noindent {\bf Assumption:} $\Delta\cbf$, $\Delta\J$, their first derivatives $\Delta\cbf_\x$, $\Delta\J_\x$, $\Delta\cbf_\theta$, $\Delta\J_\theta$, and their second order derivatives $\Delta\cbf_{\x,\x}$,  $\Delta\cbf_{\x,\theta}$, \dots, $\Delta\J_{\theta,\theta}$ are small (their norms are bounded by $\varepsilon$). 

Then ignoring products of small terms in \eqref{eqn:kktFinitePertAppLin} leads to
\begin{subequations} 
\label{eqn:kktFinitePertAppSimpleA}
 \begin{eqnarray}
\label{kkt:fwdFinitePertAppSimpleA}
0& =&  \cbf + \Delta \cbf + \cbf_\x\, \Delta\x + \cbf_\theta\, \Delta\theta \\
\label{kkt:adjFinitePertAppSimpleA}
0& =&  (\J + \Delta\J)_{\x}  + \J _{\x,\x}\, \Delta\x + \J_{\x,\theta}\, \Delta\theta \\
\nonumber
&& - \Delta\lambda^{\rm T} \cdot \cbf_\x  - \lambda^{\rm T} \cdot \left( \cbf + \Delta\cbf\right)_{\x} 
 - \lambda^{\rm T} \cdot \cbf_{\x,\x}\, \Delta\x   - \lambda^{\rm T} \cdot \cbf_{\x,\theta}\, \Delta\theta \,, \\
\label{kkt:optFinitePertAppSimpleA}
0& =&  (\J + \Delta\J)_{\theta}  + \J_{\theta,\x}\, \Delta\x + \J_{\theta,\theta}\, \Delta\theta \\
\nonumber
&& - \Delta\lambda^{\rm T} \cdot \cbf_\theta  - \lambda^{\rm T} \cdot \left( \cbf + \Delta\cbf\right)_{\theta} 
 - \lambda^{\rm T} \cdot \cbf_{\theta,\x}\, \Delta\x   - \lambda^{\rm T} \cdot \cbf_{\theta,\theta}\, \Delta\theta.
\end{eqnarray}
\end{subequations}
Using the ideal KKT conditions \eqref{eqn:kktFiniteApp} and after rearranging terms the above expressions \eqref{eqn:kktFinitePertAppSimpleA} become
\begin{subequations} 
\label{eqn:kktFinitePertAppSimple}
 \begin{eqnarray}
\label{kkt:fwdFinitePertAppSimple}
0& =&  \Delta \cbf + \cbf_\x\, \Delta\x + \cbf_\theta\, \Delta\theta \\
\label{kkt:adjFinitePertAppSimple}
0& =&  \Delta\J_{\x}^T -  \cbf_\x^T \cdot \Delta\lambda - \Delta\cbf_{\x}^T \cdot \lambda    \\
\nonumber
&& + \left( \J _{\x,\x} - \lambda^{\rm T} \cdot \cbf_{\x,\x} \right)\, \Delta\x  + \left(\J_{\x,\theta} - \lambda^{\rm T} \cdot \cbf_{\x,\theta}\right)\, \Delta\theta \,, \\
\label{kkt:optFinitePertAppSimple}
0& =&  \Delta\J_{\theta}^T -  \cbf_\theta^T \cdot \Delta\lambda - \Delta\cbf_{\theta}^T \cdot \lambda    \\
\nonumber
&& + \left( \J _{\theta,\x} - \lambda^{\rm T} \cdot \cbf_{\theta,\x} \right)\, \Delta\x  + \left(\J_{\theta,\theta} - \lambda^{\rm T} \cdot \cbf_{\theta,\theta}\right)\, \Delta\theta \,.
\end{eqnarray}
\end{subequations}
From \eqref{kkt:fwdFinitePertAppSimple}
\[
\Delta\x  = - \cbf_\x^{-1}\,\Delta \cbf  - \cbf_\x^{-1}\,\cbf_\theta\, \Delta\theta.
\]
From \eqref{kkt:adjFinitePertAppSimple}
\begin{eqnarray*}
 \Delta\lambda & =&  \cbf_\x^{-T} \cdot \Delta\J_{\x}^T  -  \cbf_\x^{-T} \cdot\Delta\cbf_{\x}^T \cdot \lambda  -  \cbf_\x^{-T} \cdot\left( \J _{\x,\x} - \lambda^{\rm T} \cdot \cbf_{\x,\x} \right)\,  \cbf_\x^{-1}\,\Delta \cbf   \\
&& -  \cbf_\x^{-T} \cdot\left( \J _{\x,\x} - \lambda^{\rm T} \cdot \cbf_{\x,\x} \right)\, \cbf_\x^{-1}\,\cbf_\theta\, \Delta\theta  +  \cbf_\x^{-T} \cdot\left(\J_{\x,\theta} - \lambda^{\rm T} \cdot \cbf_{\x,\theta}\right)\, \Delta\theta
\end{eqnarray*}
From \eqref{kkt:optFinitePertAppSimple}
\begin{eqnarray*}
0& =&  \Delta\J_{\theta}^T - \Delta\cbf_{\theta}^T \cdot \lambda  
 -  \cbf_\theta^T\, \cbf_\x^{-T} \cdot \Delta\J_{\x}^T  +  \cbf_\theta^T\,  \cbf_\x^{-T} \cdot\Delta\cbf_{\x}^T \cdot \lambda  \\
&& +  \cbf_\theta^T\, \cbf_\x^{-T} \cdot\left( \J _{\x,\x} - \lambda^{\rm T} \cdot \cbf_{\x,\x} \right)\,  \cbf_\x^{-1}\,\Delta \cbf  
      - \left( \J _{\theta,\x} - \lambda^{\rm T} \cdot \cbf_{\theta,\x} \right)\,  \cbf_\x^{-1}\,\Delta \cbf \\
&& +  \cbf_\theta^T\,  \cbf_\x^{-T} \cdot\left( \J _{\x,\x} - \lambda^{\rm T} \cdot \cbf_{\x,\x} \right)\, \cbf_\x^{-1}\,\cbf_\theta\, \Delta\theta  -  \cbf_\theta^T\,  \cbf_\x^{-T} \cdot\left(\J_{\x,\theta} - \lambda^{\rm T} \cdot \cbf_{\x,\theta}\right)\, \Delta\theta \\
&&  - \left( \J _{\theta,\x} - \lambda^{\rm T} \cdot \cbf_{\theta,\x} \right)\, \cbf_\x^{-1}\,\cbf_\theta\, \Delta\theta  + \left(\J_{\theta,\theta} - \lambda^{\rm T} \cdot \cbf_{\theta,\theta}\right)\, \Delta\theta \,.
\end{eqnarray*}

Using the reduced Hessian equation \eqref{eqn:lagRedHessian} we have that 
\begin{eqnarray*}
\ell_{\theta,\theta}\, \Delta\theta  & =&  -\left( \Delta\J_{\theta} - \lambda^T \cdot \Delta\cbf_{\theta}  \right)^T
 +  \cbf_\theta^T\, \cbf_\x^{-T} \cdot \left( \Delta\J_{\x}  -  \lambda^T \cdot \Delta\cbf_{\x} \right)^T  \\
&& -  \cbf_\theta^T\, \cbf_\x^{-T} \cdot\left( \J _{\x,\x} - \lambda^{\rm T} \cdot \cbf_{\x,\x} \right)\,  \cbf_\x^{-1}\,\Delta \cbf  
      + \left( \J _{\theta,\x} - \lambda^{\rm T} \cdot \cbf_{\theta,\x} \right)\,  \cbf_\x^{-1}\,\Delta \cbf  \\
&=&  \Delta\mathcal{O} -  \cbf_\theta^T\, \cbf_\x^{-T} \cdot \Delta\mathcal{A} \\
&& -  \left( \cbf_\theta^T\, \cbf_\x^{-T} \,\left( \J _{\x,\x} - \lambda^{\rm T} \cdot \cbf_{\x,\x} \right)  
      - \left( \J _{\theta,\x} - \lambda^{\rm T} \cdot \cbf_{\theta,\x} \right)\,\right)  \cbf_\x^{-1}\,\Delta\mathcal{F} \\
&=& \beta.
\end{eqnarray*}
where the residuals in the three KKT equations are denoted by
\begin{eqnarray*}
\Delta\mathcal{F} &=& -\Delta\cbf, \\
\Delta\mathcal{A} &=& -\left( \Delta\J_{\x} - \lambda^T \cdot \Delta\cbf_{\x} \right)^T, \\
\Delta\mathcal{O} &=& -\left( \Delta\J_{\theta} - \lambda^T \cdot \Delta\cbf_{\theta} \right)^T.
\end{eqnarray*}
Solve
\[
\ell_{\theta,\theta}\, \zeta =  \mathcal{E}_\theta^T \quad \Rightarrow \quad \zeta^T =  \mathcal{E}_\theta \cdot \ell_{\theta,\theta}^{-1}\,.
\]
Then
\[
\Delta \mathcal{E} \approx \mathcal{E}_\theta \cdot \Delta\theta = \mathcal{E}_\theta \cdot \ell_{\theta,\theta}^{-1} \cdot \beta = \zeta^T \cdot \beta.
\]
Therefore
\begin{eqnarray*}
\Delta \mathcal{E} &\approx&  \zeta^T \cdot \Delta\mathcal{O} - \zeta^T \cdot  \cbf_\theta^T\, \cbf_\x^{-T} \cdot \Delta\mathcal{A} \\
&& - \zeta^T \cdot  \left( \cbf_\theta^T\, \cbf_\x^{-T} \,\left( \J _{\x,\x} - \lambda^{\rm T} \cdot \cbf_{\x,\x} \right)  
      - \left( \J _{\theta,\x} - \lambda^{\rm T} \cdot \cbf_{\theta,\x} \right)\,\right)  \cbf_\x^{-1}\,\Delta\mathcal{F}
\end{eqnarray*}
Use the tangent linear model
\[
 \mu = -  \cbf_{\x}^{-\rm 1}\, \cbf_{\theta}\, \cdot \zeta \quad \Leftrightarrow \quad  \cbf_{\x} \cdot \mu = - \cbf_{\theta}\, \cdot \zeta.
\]
The error estimate becomes:
\begin{eqnarray*}
\Delta \mathcal{E} &\approx&  \zeta^T \cdot \Delta\mathcal{O} + \mu^T \cdot \Delta\mathcal{A} \\
&& + \left( \mu^T\,\left( \J _{\x,\x} - \lambda^{\rm T} \cdot \cbf_{\x,\x} \right)  
      +  \zeta^T \cdot\left( \J _{\theta,\x} - \lambda^{\rm T} \cdot \cbf_{\theta,\x} \right)\,\right)  \cbf_\x^{-1}\,\Delta\mathcal{F}
\end{eqnarray*}
Using the second order adjoint model
\[
 \cbf_{\x}^T \, \nu = - \left(\J_{\x,\x} - \lambda^{\rm T} \cdot \cbf_{\x,\x} \right)^T\, \mu
 - \left(\J_{\theta, \x} - \lambda^{\rm T} \cdot \cbf_{\theta, \x} \right)^T\, \zeta
\]
The error estimate becomes the familiar one:
\begin{eqnarray*}
\Delta \mathcal{E} &\approx&  \zeta^T \cdot \Delta\mathcal{O} + \mu^T \cdot \Delta\mathcal{A} - \nu^T\,\Delta\mathcal{F}.
\end{eqnarray*}
%

\subsection{Perturbed super-Lagrange parameters}
%
Recall the ideal KKT conditions \eqref{eqn:kktFiniteApp}
\begin{subequations} 
 %
\begin{eqnarray*}
\textnormal{forward model:}~~ & 0& = \cbf \,, \\
\textnormal{adjoint model:}~~ &0& = \J_{\x}^T - \cbf_{\x}^T\, \lambda \\
\textnormal{optimality:} ~~&0& =  \J_{\theta}^T - \cbf_{\theta}^T\, \lambda\,,
\end{eqnarray*}
\end{subequations}
and linearize them about $\widehat{\x}$, $\widehat{\theta}$
\begin{subequations} 
\label{eqn:idealKKTFiniteApp}
\begin{eqnarray}
 0 &=& \widehat{\cbf} -\widehat{\cbf}_\x\, \Delta \x - \widehat{\cbf}_{\theta}\, \Delta \theta \,, \\
 0 &=&\widehat{\J}_{\x}^T - \widehat{\J}_{\x,\x}\, \Delta\x  - \widehat{\J}_{\x,\theta}\, \Delta\theta \\
 && \nonumber - \left( \widehat{\cbf}_{\x} - \widehat{\cbf}_{\x, \x}\,  \Delta \x - \widehat{\cbf}_{\x, \theta}\, \Delta \theta \right)^T\, \left(\widehat{\lambda} - \Delta \lambda\right )  \\
 &=&\nonumber \widehat{\J}_{\x}^T - \widehat{\cbf}_{\x}^T \, \widehat{\lambda} + \widehat{\cbf}_{\x}^T \,\Delta \lambda \\
 && \nonumber - \left( \widehat{\J}_{\x,\x} - \widehat{\lambda} ^T\, \widehat{\cbf}_{\x, \x} \right)\, \Delta\x   - \left( \widehat{\J}_{\x,\theta} - \widehat{\lambda} ^T\, \widehat{\cbf}_{\x, \theta}\right)\, \Delta \theta \\
 && \nonumber  + \Delta \lambda^T\, \widehat{\cbf}_{\x, \x}\, \Delta\x  + \Delta \lambda^T\, \widehat{\cbf}_{\x, \theta}\, \Delta\theta,  \\
0&=& \widehat{\J}_{\theta}^T - \widehat{\J}_{\theta,\x}\, \Delta \x - \widehat{\J}_{\theta,\theta}\, \Delta \theta   \\ 
&& \nonumber - \left( \widehat{\cbf}_{\theta} - \widehat{\cbf}_{\theta, \x}\, \Delta \x  - \widehat{\cbf}_{\theta, \theta}\, \Delta \theta \right)^T\,
\left(\widehat{\lambda} - \Delta \lambda \right) \\
&=&\nonumber \widehat{\J}_{\theta}^T - \widehat{\cbf}_{\theta}^T \, \widehat{\lambda} + \widehat{\cbf}_{\theta}^T \,\Delta \lambda \\
 && \nonumber - \left( \widehat{\J}_{\theta,\x} - \widehat{\lambda} ^T\, \widehat{\cbf}_{\theta, \x} \right)\, \Delta\x   - \left( \widehat{\J}_{\theta,\theta} - \widehat{\lambda} ^T\, \widehat{\cbf}_{\theta, \theta}\right)\, \Delta \theta \\
&& \nonumber  + \Delta \lambda^T\, \widehat{\cbf}_{\theta, \x} \, \Delta\x  + \Delta \lambda^T\, \widehat{\cbf}_{\theta, \theta} \, \Delta\theta. 
\end{eqnarray}
\end{subequations}
Note that
\begin{eqnarray*}
\lambda^T\, \cbf(\x,\theta) &=& \sum_i \lambda_i \, \cbf_i (\x,\theta) \\
\frac{d\; \lambda^T\, \cbf(\x,\theta)}{d\x_j}  &=& \sum_i \lambda_i \, \frac{d\; \cbf_i (\x,\theta)}{d\x_j} =  \sum_i \lambda_i \, (\cbf_\x)_{i,j} =  \lambda^T \, (\cbf_\x)_{:,j}\\
\left( \frac{d\; \lambda^T\, \cbf(\x,\theta)}{d\x} \right)^T &=& \cbf_\x^T\, \lambda \\
\frac{d\; (\cbf_\x^T\, \lambda)_j }{d\x_k} &=&  \sum_i \lambda_i \, \frac{d\; (\cbf_\x)_{i,j} }{d\x_k}  =   \sum_i \lambda_i \, \frac{d^2\; \cbf_i }{d\x_j\, d\x_k} = \sum_i \lambda_i (\cbf_{\x,\x})_{i,j,k} \\
\frac{d\; (\cbf_\x^T\, \lambda) }{d\x} \, \Delta \x &=& \lambda^T\, \cbf_{\x,\x} \, \Delta\x = (\cbf_{\x,\x} \, \Delta\x)^T \, \lambda.
\end{eqnarray*}

Subtract the linearized ideal KKT conditions \eqref{eqn:idealKKTFiniteApp} from the perturbed KKT conditions  \eqref{eqn:kktFinitePertApp} to obtain
\begin{subequations}
\label{eqn:kktFinitePertDiff}
 \begin{eqnarray}
0&=& \Delta \widehat{\cbf} + \widehat{\cbf}_\x\, \Delta \x + \widehat{\cbf}_{\theta}\, \Delta \theta\,, \\
0&=& \Delta \widehat{\J}_{\x}^T - \Delta \widehat{\cbf}_{\x}^T\, \widehat{\lambda} - \widehat{\cbf}_{\x}^T \,\Delta \lambda \\
 && \nonumber + \left( \widehat{\J}_{\x,\x} - \widehat{\lambda} ^T\, \widehat{\cbf}_{\x, \x} \right)\, \Delta\x  + \left( \widehat{\J}_{\x,\theta} - \widehat{\lambda} ^T\, \widehat{\cbf}_{\x, \theta}\right)\, \Delta \theta \\
0&=&   \Delta \widehat{\J}_{\theta}^T - \Delta \widehat{\cbf}_{\theta}^T\, \widehat{\lambda} - \widehat{\cbf}_{\theta}^T \,\Delta \lambda \\
 && \nonumber + \left( \widehat{\J}_{\theta,\x} - \widehat{\lambda} ^T\, \widehat{\cbf}_{\theta, \x} \right)\, \Delta\x + \left( \widehat{\J}_{\theta,\theta} - \widehat{\lambda} ^T\, \widehat{\cbf}_{\theta, \theta}\right)\, \Delta \theta.
\end{eqnarray}
\end{subequations}
Note the similarity of \eqref{eqn:kktFinitePertDiff} with \eqref{eqn:kktFinitePertAppSimple}. While in  \eqref{eqn:kktFinitePertAppSimple} the functions are evaluated at 
the exact optimum, in \eqref{eqn:kktFinitePertDiff} they are evaluated at the perturbed optimum (which is the one we actually compute).

By substitution we arrive at the following:
\begin{subequations}
 \begin{eqnarray}
 \Delta \x &=& - \widehat{\cbf}_\x^{-1}\, \left( \Delta \widehat{\cbf} +  \widehat{\cbf}_{\theta}\, \Delta \theta \right) \\ 
\Delta \lambda &=& \widehat{\cbf}_{\x}^{-T} \,\left( \Delta \widehat{\J}_{\x}^T - \Delta \widehat{\cbf}_{\x}^T \, \widehat{\lambda} \right) \\
 && \nonumber - \widehat{\cbf}_{\x}^{-T} \,\left( \widehat{\J}_{\x,\x} - \widehat{\lambda} ^T\, \widehat{\cbf}_{\x, \x} \right)\,  \widehat{\cbf}_\x^{-1}\, \left( \Delta \widehat{\cbf} +  \widehat{\cbf}_{\theta}\, \Delta \theta \right)  \\
 && \nonumber+ \widehat{\cbf}_{\x}^{-T} \,\left( \widehat{\J}_{\x,\theta} - \widehat{\lambda} ^T\, \widehat{\cbf}_{\x, \theta}\right)\, \Delta \theta \\
\label{kkt:opt_perturbed}
0&=&   \Delta \widehat{\J}_{\theta}^T - \Delta \widehat{\cbf}_{\theta}^T\, \widehat{\lambda} \\
&& \nonumber - \widehat{\cbf}_{\theta}^T \,\widehat{\cbf}_{\x}^{-T} \,\left( \Delta \widehat{\J}_{\x}^T - \Delta \widehat{\cbf}_{\x}^T \, \widehat{\lambda} \right) \\
&& \nonumber + \widehat{\cbf}_{\theta}^T \,\widehat{\cbf}_{\x}^{-T} \,\left( \widehat{\J}_{\x,\x} - \widehat{\lambda} ^T\, \widehat{\cbf}_{\x, \x} \right)\,  \widehat{\cbf}_\x^{-1}\, \left( \Delta \widehat{\cbf} +  \widehat{\cbf}_{\theta}\, \Delta \theta \right) \\
&& \nonumber - \widehat{\cbf}_{\theta}^T \,\widehat{\cbf}_{\x}^{-T} \,\left( \widehat{\J}_{\x,\theta} - \widehat{\lambda} ^T\, \widehat{\cbf}_{\x, \theta}\right)\, \Delta \theta \\
 && \nonumber - \left( \widehat{\J}_{\theta,\x} - \widehat{\lambda} ^T\, \widehat{\cbf}_{\theta, \x} \right)\,  \widehat{\cbf}_\x^{-1}\, \left( \Delta \widehat{\cbf} +  \widehat{\cbf}_{\theta}\, \Delta \theta \right) \\
 && \nonumber  + \left( \widehat{\J}_{\theta,\theta} - \widehat{\lambda} ^T\, \widehat{\cbf}_{\theta, \theta}\right)\, \Delta \theta.
\end{eqnarray}
\end{subequations}
Consider the reduced perturbed Lagrangian
\begin{eqnarray}
\label{eqn:lagPertRedFiniteApp}
 \widehat{\ell}(\widehat{\theta}) =  \widehat{\J}(\widehat{\x}(\widehat{\theta}), \widehat{\theta} ) + \Delta \widehat{\J}(\widehat{\x}(\widehat{\theta}), \widehat{\theta} )  - \widehat{\lambda}(\widehat{\theta})^{\rm T} \cdot \left(\widehat{\cbf}(\widehat{\x}(\widehat{\theta}), \widehat{\theta} ) + \Delta \cbf (\widehat{\x}(\widehat{\theta}), \widehat{\theta} )\right)\,.
\end{eqnarray}
Similar to \eqref{eqn:lagRedHessian} the reduced perturbed Hessian evaluated at the perturbed optimal solution reads
\begin{eqnarray}
\label{eqn:lagRedHessianPertA}
 \widehat{\ell}_{\theta,\theta}
 &=& \widehat{\J}_{\theta,\theta} - \widehat{\lambda}^{\rm T} \, \cbf_{\theta,\theta} +  \Delta\widehat{\J}_{\theta,\theta} - \widehat{\lambda}^{\rm T} \, \Delta\cbf_{\theta,\theta}  \\
\nonumber
 && - \left( \J_{\theta,\x} - \lambda^{\rm T} \, \cbf_{\theta,\x} \right)\, (\widehat{\cbf}_\x+\Delta\widehat{\cbf}_\x)^{-1}\, (\widehat{\cbf}_\theta + \Delta\widehat{\cbf}_\theta)\, \\
\nonumber
 && - \left( \Delta\J_{\theta,\x} - \lambda^{\rm T} \, \Delta\cbf_{\theta,\x} \right)\, (\widehat{\cbf}_\x+\Delta\widehat{\cbf}_\x)^{-1}\, (\widehat{\cbf}_\theta + \Delta\widehat{\cbf}_\theta)\, \\
 \nonumber
 && -   (\widehat{\cbf}_\theta + \Delta\widehat{\cbf}_\theta)^T\,  (\widehat{\cbf}_\x+\Delta\widehat{\cbf}_\x)^{-T}\,\, \left( \J_{\x,\theta} - \lambda^{\rm T} \, \cbf_{\x,\theta} \right) \, \\
 \nonumber
 && -   (\widehat{\cbf}_\theta + \Delta\widehat{\cbf}_\theta)^T\,  (\widehat{\cbf}_\x+\Delta\widehat{\cbf}_\x)^{-T}\,\, \left( \Delta\J_{\x,\theta} - \lambda^{\rm T} \, \Delta\cbf_{\x,\theta} \right) \, \\
\nonumber
 && + (\widehat{\cbf}_\theta + \Delta\widehat{\cbf}_\theta)^T\,  (\widehat{\cbf}_\x+\Delta\widehat{\cbf}_\x)^{-T}\,  \left( \J_{\x,\x} - \lambda^{\rm T} \, \cbf_{\x,\x} \right) \, (\widehat{\cbf}_\x+\Delta\widehat{\cbf}_\x)^{-1}\, (\widehat{\cbf}_\theta + \Delta\widehat{\cbf}_\theta)\\
\nonumber
 && + (\widehat{\cbf}_\theta + \Delta\widehat{\cbf}_\theta)^T\,  (\widehat{\cbf}_\x+\Delta\widehat{\cbf}_\x)^{-T}\,  \left( \Delta\J_{\x,\x} - \lambda^{\rm T} \, \Delta\cbf_{\x,\x} \right) \, (\widehat{\cbf}_\x+\Delta\widehat{\cbf}_\x)^{-1}\, (\widehat{\cbf}_\theta + \Delta\widehat{\cbf}_\theta).
\end{eqnarray}
Assume that $\Vert \Delta\widehat{\cbf}_\x \Vert$ and $\Vert \Delta\widehat{\cbf}_\theta \Vert$ are small. Neglecting products of small terms we have that
\begin{eqnarray*}
(\widehat{\cbf}_\x+\Delta\widehat{\cbf}_\x)^{-1}\, (\widehat{\cbf}_\theta + \Delta\widehat{\cbf}_\theta) &\approx& (\widehat{\cbf}_\x^{-1}-\Delta\widehat{\cbf}_\x)\, (\widehat{\cbf}_\theta + \Delta\widehat{\cbf}_\theta) \\
&\approx& \widehat{\cbf}_\x^{-1}\, \widehat{\cbf}_\theta +\widehat{\cbf}_\x^{-1}\,  \Delta\widehat{\cbf}_\theta -\Delta\widehat{\cbf}_\x\, \widehat{\cbf}_\theta.
\end{eqnarray*}
We also assume that  $\Vert \Delta\widehat{\cbf}_{\x,\x} \Vert$, $\Vert \Delta\widehat{\cbf}_{\x,\theta} \Vert$, and $\Vert \Delta\widehat{\cbf}_{\theta,\theta} \Vert$ are small. 

With this approximation, and after neglecting products of small terms, the reduced perturbed Hessian \eqref{eqn:lagRedHessianPertA} becomes
\begin{eqnarray}
\label{eqn:lagRedHessianPertB}
 \widehat{\ell}_{\theta,\theta}
 &=& \widehat{\J}_{\theta,\theta} - \widehat{\lambda}^{\rm T} \, \cbf_{\theta,\theta} \\
\nonumber
 && +  \Delta\widehat{\J}_{\theta,\theta} - \widehat{\lambda}^{\rm T} \, \Delta\cbf_{\theta,\theta}  \\
\nonumber
 && - \left( \J_{\theta,\x} - \lambda^{\rm T} \, \cbf_{\theta,\x} \right)\, (\widehat{\cbf}_\x^{-1}\, \widehat{\cbf}_\theta +\widehat{\cbf}_\x^{-1}\,  \Delta\widehat{\cbf}_\theta -\Delta\widehat{\cbf}_\x\, \widehat{\cbf}_\theta)\, \\
\nonumber
 && - \left( \Delta\J_{\theta,\x} - \lambda^{\rm T} \, \Delta\cbf_{\theta,\x} \right)\, (\widehat{\cbf}_\x^{-1}\, \widehat{\cbf}_\theta)\, \\
 \nonumber
 && - (\widehat{\cbf}_\x^{-1}\, \widehat{\cbf}_\theta +\widehat{\cbf}_\x^{-1}\,  \Delta\widehat{\cbf}_\theta -\Delta\widehat{\cbf}_\x\, \widehat{\cbf}_\theta)^{T}\, \left( \J_{\x,\theta} - \lambda^{\rm T} \, \cbf_{\x,\theta} \right) \, \\
 \nonumber
 && - (\widehat{\cbf}_\x^{-1}\, \widehat{\cbf}_\theta)^{T}\, \left( \Delta\J_{\x,\theta} - \lambda^{\rm T} \, \Delta\cbf_{\x,\theta} \right) \, \\
\nonumber
 && + (\widehat{\cbf}_\x^{-1}\, \widehat{\cbf}_\theta)^{T}\,  \left( \J_{\x,\x} - \lambda^{\rm T} \, \cbf_{\x,\x} \right) \, (\widehat{\cbf}_\x^{-1}\, \widehat{\cbf}_\theta +\widehat{\cbf}_\x^{-1}\,  \Delta\widehat{\cbf}_\theta -\Delta\widehat{\cbf}_\x\, \widehat{\cbf}_\theta)\\
\nonumber
 && + (\widehat{\cbf}_\x^{-1}\, \widehat{\cbf}_\theta +\widehat{\cbf}_\x^{-1}\,  \Delta\widehat{\cbf}_\theta -\Delta\widehat{\cbf}_\x\, \widehat{\cbf}_\theta)^{T}\,  \left( \J_{\x,\x} - \lambda^{\rm T} \, \cbf_{\x,\x} \right) \, (\widehat{\cbf}_\x^{-1}\, \widehat{\cbf}_\theta)\\
\nonumber
 && + (\widehat{\cbf}_\x^{-1}\, \widehat{\cbf}_\theta)^{T}\,  \left( \Delta\J_{\x,\x} - \lambda^{\rm T} \, \Delta\cbf_{\x,\x} \right) \, (\widehat{\cbf}_\x^{-1}\, \widehat{\cbf}_\theta).
\end{eqnarray}
After neglecting products of small terms
\begin{eqnarray*}
 \widehat{\ell}_{\theta,\theta}\cdot \Delta\theta
 &=& \left( \widehat{\J}_{\theta,\theta} - \widehat{\lambda}^{\rm T} \, \cbf_{\theta,\theta}\right) \Delta \theta \\
\nonumber
 && - \left( \J_{\theta,\x} - \lambda^{\rm T} \, \cbf_{\theta,\x} \right)\, (\widehat{\cbf}_\x^{-1}\, \widehat{\cbf}_\theta)\, \Delta \theta \\
\nonumber
  && - (\widehat{\cbf}_\x^{-1}\, \widehat{\cbf}_\theta )^{T}\, \left( \J_{\x,\theta} - \lambda^{\rm T} \, \cbf_{\x,\theta} \right) \,\Delta\theta \\
\nonumber
 && + (\widehat{\cbf}_\x^{-1}\, \widehat{\cbf}_\theta)^{T}\,  \left( \J_{\x,\x} - \lambda^{\rm T} \, \cbf_{\x,\x} \right) \, (\widehat{\cbf}_\x^{-1}\, \widehat{\cbf}_\theta) \Delta\theta.
\end{eqnarray*}

The last equation \eqref{kkt:opt_perturbed} reads
 \begin{eqnarray}
 0&=&   \Delta \widehat{\J}_{\theta}^T - \Delta \widehat{\cbf}_{\theta}^T\, \widehat{\lambda} - \widehat{\cbf}_{\theta}^T \,\widehat{\cbf}_{\x}^{-T} \,\left( \Delta \widehat{\J}_{\x}^T - \Delta \widehat{\cbf}_{\x}^T \, \widehat{\lambda} \right) \\
&& \nonumber + \widehat{\cbf}_{\theta}^T \,\widehat{\cbf}_{\x}^{-T} \,\left( \widehat{\J}_{\x,\x} - \widehat{\lambda} ^T\, \widehat{\cbf}_{\x, \x} \right)\,  \widehat{\cbf}_\x^{-1}\, \Delta \widehat{\cbf}  \\
 && \nonumber - \left( \widehat{\J}_{\theta,\x} - \widehat{\lambda} ^T\, \widehat{\cbf}_{\theta, \x} \right)\,  \widehat{\cbf}_\x^{-1}\, \Delta \widehat{\cbf}  \\
&& \nonumber + \widehat{\cbf}_{\theta}^T \,\widehat{\cbf}_{\x}^{-T} \,\left( \widehat{\J}_{\x,\x} - \widehat{\lambda} ^T\, \widehat{\cbf}_{\x, \x} \right)\,  \widehat{\cbf}_\x^{-1}\,\widehat{\cbf}_{\theta}\, \Delta \theta \\
&& \widehat{\ell}_{\theta,\theta}\cdot \Delta \theta.
\end{eqnarray}

The derivation follows identical to the unperturbed case.

 \end{appendices}

\end{document}